\newtheorem{thm}{Theorem}[section]
\newtheorem{lem}[thm]{Lemma}
\newtheorem{prop}[thm]{Proposition}
\theoremstyle{definition}
\newtheorem{defn}[thm]{Definition}
\theoremstyle{remark}
\numberwithin{equation}{section}
\newcommand{\N}{\mathbb N}
\newcommand{\M}{\mathbb M}
\newcommand{\R}{\mathbb R}
\newcommand{\dist}{\operatorname{dist}}
\begin{document}



\title[Adhesion and volume filling in 1-D population dynamics]{Adhesion and volume filling in\\one-dimensional population dynamics\\under Dirichlet boundary condition}

\author{Hyung Jun Choi}
\address{School of Liberal Arts, Korea University of Technology and Education, Cheonan 31253, Republic of Korea}
\curraddr{}
\email{hjchoi@koreatech.ac.kr}
\thanks{}

\author{Seonghak Kim}
\address{Department of Mathematics, College of Natural Sciences, Kyungpook National University, Daegu 41566, Republic of Korea}
\curraddr{}
\email{shkim17@knu.ac.kr}
\thanks{}

\author{Youngwoo Koh}
\address{Department of Mathematics Education, Kongju National University, Kongju 32588, Republic of Korea}
\curraddr{}
\email{ywkoh@kongju.ac.kr}
\thanks{}

\subjclass[2010]{Primary 35M13, Secondary 35K59, 35D30, 92D25}

\keywords{Population model, forward-backward-forward type, adhesion and volume filling, partial differential inclusion, convex integration}

\date{}

\dedicatory{}

\begin{abstract}
We generalize the one-dimensional population model of Anguige \& Schmeiser \cite{AS} reflecting the cell-to-cell adhesion and volume filling and classify the resulting equation into the six types. Among these types, we fix one that yields a class of advection-diffusion equations of forward-backward-forward type and prove the existence of infinitely many global-in-time weak solutions to the initial-Dirichlet boundary value problem when the maximum value of an initial population density exceeds a certain threshold. Such solutions are extracted from the method of convex integration by M\"uller \& \v Sver\'ak \cite{MSv2}; they exhibit fine-scale density mixtures over a finite time interval, then become smooth and identical, and decay exponentially and uniformly to zero as time approaches infinity.
\end{abstract}

\maketitle

\tableofcontents

\section{Introduction}

The evolution process for spatial distribution of cells or animals in one-dimensional homogeneous habitat can be modeled by quasilinear advection-diffusion equations of the form,
\begin{equation}\label{eq:first}
u_t=(\rho(u))_{xx},
\end{equation}
where $u=u(x,t)$ denotes the population density of a single species at position $x$ and time $t$, and the derivative $\sigma(s)=\rho'(s)$ of a given flux function $\rho(s)$ represents the diffusivity of equation (\ref{eq:first}).

In zoological studies, as an improvement to the $\Delta$-model by {Taylor \& Taylor} \cite{TT} for adhesive movement, {Turchin} \cite{Tu} proposed equation (\ref{eq:first}), based on a random walk approach \cite{Ok}, as a model of individual movement, which is not only reflecting adhesion or repulsion between conspecific organisms but also avoiding some defects in their model. In his model, the flux function $\rho$ is given by
\begin{equation}\label{rho-function-Turchin}
\rho(s)=\frac{2k_0}{3\omega}s^3 -k_0 s^2 +\frac{\mu}{2} s,
\end{equation}
where $k_0>0$ is the maximum degree of gregariousness, $\omega>0$ is the critical density at which movement switches from adhesive to repulsive, and $\mu\in(0,1]$ is the motility rate. With this flux function $\rho$, he considered the initial-Dirichlet boundary value problem
\begin{equation}\label{ibp-first}
\left\{
\begin{array}{ll}
  u_t=(\rho(u))_{xx} & \mbox{in $\Omega\times(0,\infty)$}, \\
  u=u_0 & \mbox{on $\Omega\times\{t=0\}$}, \\
  u(0,t)=u(L,t)=\delta_0 & \mbox{for $t>0$},
\end{array}\right.
\end{equation}
where $\Omega=(0,L)\subset\R$ is a favorable habitat of size $L>0$, $u_0=u_0(x)$ is the initial density of a single species, and $\delta_0\ge0$ is a constant for the Dirichlet (or absorbing) boundary condition. Here, the constant $\delta_0$ reflects the intensity of hostility of the surrounding habitat and/or the migration rate towards there. For example, when $\delta_0=0$, animals touching the border $\partial\Omega=\{0,L\}$ are permanently lost to the population, either because they move away from the habitat $\Omega$ or because they are killed by predators residing in the very hostile surrounding area. In this case, it is expected that the total population eventually vanishes as time approaches infinity.


In mathematical biology, {Anguige \& Schmeiser} \cite{AS} obtained equation (\ref{eq:first}), based also on the random walk approach, as a model of cell motility which incorporates the effects of cell-to-cell adhesion and volume filling. In their model, the flux function $\rho$ is given by
\begin{equation}\label{rho-function-AS}
\rho(s)=\alpha s^3-2\alpha s^2+s,
\end{equation}
where $\alpha\in [0,1]$ is the adhesion constant. With this flux function $\rho$, they studied the initial-Neumann boundary value problem
\begin{equation}\label{ibp-second}
\left\{
\begin{array}{ll}
  u_t=(\rho(u))_{xx} & \mbox{in $\Omega\times(0,\infty)$}, \\
  u=u_0 & \mbox{on $\Omega\times\{t=0\}$}, \\
  u_x(0,t)=u_x(L,t)=0 & \mbox{for $t>0$},
\end{array}\right.
\end{equation}
where $\Omega=(0,1)\subset\R$ is a habitat of unit size. Here, the Neumann (or reflecting) boundary condition should imply that the total population remains the same for all time.

In both models of \cite{Tu, AS}, when the species in question are highly adhesive (that is, $k_0\omega>\mu$ in (\ref{rho-function-Turchin}) and $\alpha>\frac{3}{4}$ in (\ref{rho-function-AS}), resp.), the diffusivity $\sigma(s)=\rho'(s)$ admits a nonempty open interval in which it is negative so that the equation $u_t=(\rho(u))_{xx}$ becomes \emph{backward parabolic} for the population density values $s=u$ lying in that interval. So when the range of the initial population density $u_0$ overlaps with the interval of backward regime, problems (\ref{ibp-first}) and (\ref{ibp-second}) may be ill-posed.
This awkward situation was regarded as a clue for the authors of \cite{Tu, AS} to expect a certain pattern formation in the population density $u$ as time goes by. However, there have not been any existence results on the highly adhesive random walk models since the usual methods of parabolic theory are no longer applicable to this case.

Regardless of the expectation stated above, there has been skepticism on the random walk models of adhesive population dynamics due to their limited practicality and inability of displaying complicated behavior such as sorting \cite{HPW, CPSZ}. Meanwhile, many nonlocal adhesion models have been suggested and studied actively to reflect various phenomena in population dynamics. For an overview, one may refer to the review article \cite{CPSZ}.

Nonetheless, in this paper, we generalize the adhesion population model of Anguige \& Schmeiser \cite{AS} and classify the resulting equation into the six types. Among such types, we fix one that yields a class of advection-diffusion equations of forward-backward-forward type and prove the existence of infinitely many global-in-time weak solutions to the initial-Dirichlet boundary value problem when the maximum value of the initial population density $u_0$ exceeds a certain threshold. Although our solutions may not exhibit expected behavior of adhesion such as clustering and sorting, they capture some interesting phenomena: fine-scale density mixtures between the high and low density regimes, smoothing after a finite time, and smooth extinction of the species at an exponential rate due to the absorbing boundary condition.
On the other hand, in the same initial value problem under the Neumann boundary condition, we expect to obtain global weak solutions showing quite different behaviors from the Dirichlet case. This will be explored in the subsequent paper as a sequel.

\underline{\textbf{Plan of the paper:}} In the rest of this section, we derive a continuum model of adhesion, which is a generalization of \cite{AS}, and then justify the definition of a global weak solution to the corresponding initial-Dirichlet boundary value problem. In section \ref{sec:classify}, depending on the choice of a pair of adhesion and volume filling constants, we classify the resulting equation from continuum modeling into the six types. Then the main result of the paper, Theorem \ref{thm:main}, is sophisticated in section \ref{sec:mainresult} along with a strategy to obtain solutions. As an independent section, in section \ref{sec:generic}, we formulate a differential inclusion problem and present its special solvability result as Theorem \ref{thm:two-wall}, which serves as an essential ingredient for a proof of Theorem \ref{thm:main} in section \ref{sec:setup}. Sections \ref{sec:proof-main-thm} and \ref{sec:proof-main-lem} carry out a long proof of Theorem \ref{thm:two-wall}, which can be regarded as the core analysis of the paper.

\underline{\textbf{Notations:}} Hereafter, let $L>0$ denote the size of a favorable habitat $\Omega=(0,L)\subset\R$.
For $\tau\in(0,\infty],$ we write
\[
\Omega_\tau:=\Omega\times(0,\tau)\subset\R^2.
\]

Let $k$, $m$, and $n$ be positive integers, let $0<a<1$, and let $U\subset\R^n$ be an open set.
\begin{itemize}
\item[(i)] We denote by $C^k(U)$ the space of functions $u:U\to\R$ whose partial derivatives of order up to $k$ exist and are continuous in $U$.
\item[(ii)] Let $C^k(\bar{U})$ be the space of functions $u\in C^k(U)$ whose partial derivatives of order up to $k$ are uniformly continuous in every bounded subset of $U$.
\item[(iii)] Let $n=2$; so $U\subset\R^2=\R_x\times \R_t.$ We define $C^{2,1}(U)$ to be the space of functions $u:U\to\R$ such that $u_x$, $u_{xx}$, and $u_t$ exist and are continuous in $U$. Also, let $C^{2,1}(\bar{U})$ denote the space of functions $u\in C^{2,1}(U)$ such that $u_x$, $u_{xx}$, and $u_t$ are uniformly continuous in every bounded subset of $U$.
\item[(iv)] We denote by $C^{2+a}(\bar\Omega)$ the space of functions $u\in C^{2}(\bar\Omega)$ with
\[
\sup_{x,y\in\Omega,\,x\ne y}\frac{|u_{xx}(x)-u_{xx}(y)|}{|x-y|^a}<\infty.
\]
\item[(v)] Let $0<T<\infty.$  We define $C^{2+a,1+\frac{a}{2}}(\bar\Omega_T)$ to be the space of functions $u\in C^{2,1}(\bar\Omega_T)$ whose quantities
\[
\sup_{x,y\in\Omega,\,x\ne y,\,0<t<T}\frac{|u_{xx}(x,t)-u_{xx}(y,t)|}{|x-y|^a},\; \sup_{x\in\Omega,\,0<s,t<T,\,s\ne t}\frac{|u_{xx}(x,s)-u_{xx}(x,t)|}{|s-t|^\frac{a}{2}},
\]
\[
\sup_{x,y\in\Omega,\,x\ne y,\,0<t<T}\frac{|u_{t}(x,t)-u_{t}(y,t)|}{|x-y|^a},\; \sup_{x\in\Omega,\,0<s,t<T,\,s\ne t}\frac{|u_{t}(x,s)-u_{t}(x,t)|}{|s-t|^\frac{a}{2}}
\]
are all finite.
\item[(vi)] We denote by $\M^{m\times n}$  the space of $m\times n$ real matrices.
\item[(vii)] For a Lebesgue measurable set $E\subset\R^n,$ its $n$-dimensional measure is denoted by $|E|=|E|_n$ with subscript $n$ omitted if it is clear from the context.
\item[(viii)] We use  $W^{k,p}(U)$ to denote the space of functions $u\in L^{p}(U)$ whose weak partial derivatives of order up to $k$ exist in $U$ and belong to $L^{p}(U)$.
\item[(ix)] A sequence $\{U_\ell\}_{\ell\in\N}$ of disjoint open subsets of $U$ is called a \emph{Vitali cover} of $U$ if $|U\setminus\cup_{\ell\in\N}U_\ell|=0$; in this case, $\{U_\ell\}_{\ell\in\N}$ is said to cover $U$ \emph{in the sense of Vitali}.
\end{itemize}

\subsection{Continuum model}

For any fixed $n\in\N$ with $n\ge 2$, let us consider a discrete distribution of homogeneous cells residing on the endpoints of the $2^n$ uniform subintervals
\[
[0,h_n],\,[h_n,2h_n],\,[2h_n,3h_n],\,\ldots\,,\,[(2^n-1)h_n,L]
\]
of the spatial domain $\bar{\Omega}=[0,L]$, where $h_n:=L/2^{n}$. Assume that $N_{cap}\in\N$ is the maximum number of cells that can stay at each position $x_{ni}:=ih_n$ $(i=0,1,\ldots,2^n)$.  For $i=0,1,\ldots,2^n,$ let $\tilde{u}_{ni}(t)\in\{0,1,\ldots,N_{cap}\}$ denote the number of cells at position $x_{ni}$ and time $t\ge0$, and write
\[
u_{ni}(t):=\frac{\tilde{u}_{ni}(t)}{N_{cap}}\in[0,1],
\]
called the \emph{discrete cell density} at position $x_{ni}$ and time $t\ge0$. We assume that the dynamics of such cells is governed by the system of ordinary differential equations,
\begin{equation}\label{eq:ODE}
\frac{du_{ni}}{dt}=\mathcal{T}^+_{n(i-1)}u_{n(i-1)}+\mathcal{T}^-_{n(i+1)}u_{n(i+1)}-(\mathcal{T}^+_{ni}+\mathcal{T}^-_{ni})u_{ni},
\end{equation}
where $i=2,3,\ldots,2^n-2.$
Here, for $i=1,2,\ldots,2^n-1,$ we denote by  $\mathcal{T}^\pm_{ni}$  the \emph{transitional probabilities} per unit time of a one-step jump from $ih_n$ to $(i\pm 1)h_n$ that are given by
\begin{equation}\label{eq:ODE-tran-prob}
\begin{split}
\mathcal{T}^+_{ni} & =\frac{(1-\beta u_{n(i+1)})(1-\alpha u_{n(i-1)})}{h_n^2},\\
\mathcal{T}^-_{ni} & =\frac{(1-\alpha u_{n(i+1)})(1-\beta u_{n(i-1)})}{h_n^2},
\end{split}
\end{equation}
where $\alpha,\beta\in[0,1]$ are the \emph{adhesion} and \emph{volume filling} constants, respectively.
Note here that only the maximal volume filling constant $\beta=1$ is considered by {Anguige \& Schmeiser} \cite{AS}.
We also write $I_n:=\{ih_n\,|\,i=0,1,\ldots,2^n\}$, the $n$th discrete habitat; then
$
I_2\subset I_3\subset I_4\subset\cdots.
$

Inserting (\ref{eq:ODE-tran-prob}) into (\ref{eq:ODE}), we obtain that for $i=2,3,\ldots,2^n-2$,
\begin{equation}\label{eq:ODE-result}
\begin{split}
\frac{du_{ni}}{dt}
= \frac{1}{h_n}\bigg( & \frac{R_{n(i+1)}-R_{ni}}{h_n}-\frac{R_{ni}-R_{n(i-1)}}{h_n} +\frac{Q_{n(i+1)}-Q_{ni}}{h_n}\\
&-\frac{Q_{ni}-Q_{n(i-1)}}{h_n} +\frac{Q_{ni}-Q_{n(i-1)}}{h_n}-\frac{Q_{n(i-1)}-Q_{n(i-2)}}{h_n}\bigg),
\end{split}
\end{equation}
where
\[
R_{nj}:=u_{nj}(1-\alpha\beta+\alpha\beta(1-u_{n(j-1)})(1-u_{n(j+1)}))
\]
for $j=1,2,\ldots,2^n-1$, and
\[
Q_{nk}:=\alpha(\beta-1) u_{nk}u_{n(k+1)}
\]
for $k=0,1,\ldots,2^n-1$.

To derive the continuum equation, assume that there exists a function $u=u(x,t)\in C^{2,1}(\Omega_\infty;[0,1])\cap C(\bar{\Omega}_\infty;[0,1])$ such that
\[
u(ih_n,t)=u_{ni}(t)
\]
for all $n\in\N,$ $i=0,1,\ldots,2^n,$ and $t\ge 0$.

Now, fix any $n\in\N$ with $n\ge2$. Then for $j=1,2,\ldots,2^n-1$, $k=0,1,\ldots,2^n-1$, and $t\ge 0$, we have
\[
\begin{split}
R_{nj}(t) & =u(jh_n,t)(1-\alpha\beta+\alpha\beta(1-u((j-1)h_n,t))(1-u((j+1)h_n,t))),\\
Q_{nk}(t) & =\alpha(\beta-1) u(kh_n,t)u((k+1)h_n,t).
\end{split}
\]
In this regard,  for $t\ge 0$, define
\[
R_n(x,t)  =u(x,t)(1-\alpha\beta+\alpha\beta(1-u(x-h_n,t))(1-u(x+h_n,t)))
\]
for $h_n\le x\le L-h_n$ and
\[
Q_n(x,t)  =\alpha(\beta-1) u(x,t)u(x+h_n,t)
\]
for $0\le x\le L-h_n;$ then
\[
\begin{split}
R_n & \in C^{2,1}((h_n,L-h_n)\times(0,\infty))\cap C([h_n,L-h_n]\times[0,\infty)),\\
Q_n & \in C^{2,1}((0,L-h_n)\times(0,\infty))\cap C([0,L-h_n]\times[0,\infty)),
\end{split}
\]
\[
\begin{split}
(R_n)_{xx}(x,t)
=& \,u_{xx}(x,t)(1-\alpha\beta+\alpha\beta(1-u(x-h_n,t))(1-u(x+h_n,t)))\\
& -2\alpha\beta u_x(x,t)u_x(x-h_n,t)(1-u(x+h_n,t))\\
& -2\alpha\beta u_x(x,t)u_x(x+h_n,t)(1-u(x-h_n,t))\\
& -\alpha\beta u(x,t)u_{xx}(x-h_n,t)(1-u(x+h_n,t))\\
& +2\alpha\beta u(x,t)u_x(x-h_n,t)u_x(x+h_n,t)\\
& -\alpha\beta u(x,t)u_{xx}(x+h_n,t)(1-u(x-h_n,t)),\\
(Q_n)_{xx}(x,t)
= & \,\alpha(\beta-1) u_{xx}(x,t)u(x+h_n,t) +2\alpha(\beta-1) u_{x}(x,t)u_x(x+h_n,t)\\
& +\alpha(\beta-1) u(x,t)u_{xx}(x+h_n,t).
\end{split}
\]

Next, fix any $x_0\in (\cup_{n\ge2}I_n)\setminus \{0,L\}\subset\Omega$ and $t_0>0.$ Let $n_0$ denote the smallest positive integer with $x_0\in I_{n_0};$ then
\[
x_0=i_0 h_{n_0}
\]
for some odd integer $i_0\in\{1,3,\ldots,2^{n_0}-1\}$. Choose any integer $n>n_0$. Then
\[
x_0=i_{0n} h_{n},
\]
where $i_{0n}:=2^{n-n_0}i_0\in\{2,4,\ldots,2^n-2\}.$ From (\ref{eq:ODE-result}) and the Mean Value Theorem, we have
\[
\begin{split}
u_t(x_0,t_0)= & \,u_t(i_{0n} h_{n},t_0)=\frac{d u_{ni_{0n}}}{dt}(t_0)\\
= & \,(R_n)_{xx}(x_{1n},t_0)+(Q_n)_{xx}(x_{2n},t_0)+(Q_n)_{xx}(x_{3n},t_0)
\end{split}
\]
for some $x_{1n},x_{2n}\in((i_{0n}-1)h_n,(i_{0n}+1)h_n)=(x_0-h_n,x_0+h_n)$ and $x_{3n}\in((i_{0n}-2)h_n,i_{0n}h_n)=(x_0-2h_n,x_0)$.
Thus,
\[
\begin{split}
u_t(x_0,t_0)= & \, u_{xx}(x_{1n},t_0)(1-\alpha\beta+\alpha\beta(1-u(x_{1n}-h_n,t_0))(1-u(x_{1n}+h_n,t_0)))\\
& -2\alpha\beta u_x(x_{1n},t_0)u_x(x_{1n}-h_n,t_0)(1-u(x_{1n}+h_n,t_0))\\
& -2\alpha\beta u_x(x_{1n},t_0)u_x(x_{1n}+h_n,t_0)(1-u(x_{1n}-h_n,t_0))\\
& -\alpha\beta u(x_{1n},t_0)u_{xx}(x_{1n}-h_n,t_0)(1-u(x_{1n}+h_n,t_0))\\
& +2\alpha\beta u(x_{1n},t_0)u_x(x_{1n}-h_n,t_0)u_x(x_{1n}+h_n,t_0)\\
& -\alpha\beta u(x_{1n},t_0)u_{xx}(x_{1n}+h_n,t_0)(1-u(x_{1n}-h_n,t_0))\\
& +\alpha(\beta-1) u_{xx}(x_{2n},t_0)u(x_{2n}+h_n,t_0)\\
& +2\alpha(\beta-1) u_{x}(x_{2n},t_0)u_x(x_{2n}+h_n,t_0)\\
& +\alpha(\beta-1) u(x_{2n},t_0)u_{xx}(x_{2n}+h_n,t_0)\\
& +\alpha(\beta-1) u_{xx}(x_{3n},t_0)u(x_{3n}+h_n,t_0)\\
& +2\alpha(\beta-1) u_{x}(x_{3n},t_0)u_x(x_{3n}+h_n,t_0)\\
& +\alpha(\beta-1) u(x_{3n},t_0)u_{xx}(x_{3n}+h_n,t_0)
\end{split}
\]
\[
\longrightarrow\; 
  u_{xx}(x_0,t_0)(3\alpha\beta u^2(x_0,t_0)-4\alpha u(x_0,t_0)+1) + u_x^2(x_0,t_0)(6\alpha\beta u(x_0,t_0)-4\alpha)
\]
as $n\to\infty;$ that is,
\[
u_t=(\sigma(u)u_x)_x\;\;\mbox{at $(x,t)=(x_0,t_0)$,}
\]
where $\sigma(s)=\sigma_{\alpha\beta}(s):=3\alpha\beta s^2-4\alpha s+1$ $(s\in\R).$ By continuity, we conclude that
\begin{equation}\label{eq:main}
u_t=(\sigma(u)u_x)_x=(\rho(u))_{xx}\;\;\mbox{in $\Omega_\infty$,}
\end{equation}
where $\rho(s)=\rho_{\alpha\beta}(s):=\alpha\beta s^3-2\alpha s^2+s$ $(s\in\R).$
For later use, let us denote
\[
\begin{split}
Z_\sigma=Z_{\sigma,\alpha\beta} & := \{s\in[0,1]\,|\,\sigma(s)=0\}, \\
Z_\rho=Z_{\rho,\alpha\beta} & :=\{s\in(0,1]\,|\,\rho(s)=0\}.
\end{split}
\]

In this paper, we study equation (\ref{eq:main}), coupled with the initial condition,
\begin{equation}\label{incond}
u=u_0\quad\mbox{on $\Omega\times\{t=0\}$}
\end{equation}
and the Dirichlet boundary condition,
\begin{equation}\label{bdry:Dirichlet}
u=0\quad\mbox{on $\partial\Omega\times(0,\infty)$},
\end{equation}
where $u_0\in L^\infty(\Omega;[0,1])$ is a given initial population density.

\subsection{Global weak solutions}
To derive a natural definition of a weak solution to problem (\ref{eq:main})(\ref{incond})(\ref{bdry:Dirichlet}), let $u_0\in C^2(\bar\Omega;[0,1])$ be such that
\[
u_0(0)=u_0(L)=0,\;\;4\alpha (u_0'(0))^2=u_0''(0), \;\;\mbox{and}\;\; 4\alpha (u_0'(L))^2=u_0''(L).
\]

Assume that $u\in C^{2,1}(\bar\Omega_\infty;[0,1])$ is a global classical solution to problem (\ref{eq:main})(\ref{incond})(\ref{bdry:Dirichlet}). Fix any $T>0$, and choose a test function $\varphi\in C^\infty(\bar\Omega\times [0,T])$ such that
\[
\varphi=0\;\;\mbox{on $(\partial\Omega\times[0,T])\cup (\Omega\times\{t=T\})$}.
\]
Then from the integration by parts,
\[
\begin{split}
0= & \int_0^T\int_0^L (u_t -(\rho(u))_{xx})\varphi\,dxdt \\
= & -\int_0^L u_0(x)\varphi(x,0)\,dx - \int_0^T\int_0^L (u\varphi_t +\rho(u)\varphi_{xx})\,dxdt.
\end{split}
\]

Conversely, assume that $u\in C^{2,1}(\bar\Omega_\infty;[0,1])$ is a function satisfying that
\[
\int_0^T\int_0^L (u\varphi_t +\rho(u)\varphi_{xx})\,dxdt+\int_0^L u_0(x)\varphi(x,0)\,dx=0
\]
for each $T>0$ and each $\varphi\in C^\infty(\bar\Omega\times [0,T])$ with
\[
\varphi=0\;\;\mbox{on $(\partial\Omega\times[0,T])\cup (\Omega\times\{t=T\})$}.
\]
We will check below that $u$ is a global classical solution to problem (\ref{eq:main})(\ref{incond})(\ref{bdry:Dirichlet}) provided that
\begin{equation}\label{bdry:Dirichlet-extra}
u\not\in Z_\rho\;\;\mbox{on $\partial\Omega\times(0,\infty)$}.
\end{equation}

To show that (\ref{eq:main}) holds, fix any $\varphi\in C^\infty_c(\Omega_\infty).$ Choose a $T=T_\varphi>0$ so large that $\mathrm{spt}(\varphi)\subset\subset \Omega\times(0,T).$ Then from the integration by parts,
\[
\begin{split}
0= & \int_0^T\int_0^L (u\varphi_t +\rho(u)\varphi_{xx})\,dxdt+\int_0^L u_0(x)\varphi(x,0)\,dx \\
= & \int_0^T\int_0^L (-u_t +(\rho(u))_{xx})\varphi\,dxdt=\int_0^\infty\int_0^L (-u_t +(\rho(u))_{xx})\varphi\,dxdt.
\end{split}
\]
Thus, (\ref{eq:main}) is satisfied.

Next, to check that (\ref{incond}) holds, fix any $\psi\in C^\infty_c(\Omega)$. Choose a function $\omega\in C^\infty(\R)$ such that
\[
\omega=1\;\;\mbox{on $(-\infty,0]$}\;\;\mbox{and}\;\;\omega=0\;\;\mbox{on $[2,\infty)$},
\]
and define $\varphi(x,t)=\psi(x)\omega(t)$ for $(x,t)\in\bar{\Omega}_\infty.$
Then with $T=2,$ it follows from (\ref{eq:main}) that
\[
\begin{split}
0= & \int_0^2\int_0^L (u\varphi_t +\rho(u)\varphi_{xx})\,dxdt+\int_0^L u_0(x)\varphi(x,0)\,dx \\
= & \int_0^L (u(x,2)\varphi(x,2)-u(x,0)\varphi(x,0))\,dx -\int_0^2\int_0^L u_t\varphi\,dxdt \\
& +\int_0^2 (\rho(u(L,t))\varphi_{x}(L,t)- \rho(u(0,t))\varphi_{x}(0,t))\,dt\\
& -\int_0^2 ( (\rho(u))_x (L,t)\varphi(L,t)-(\rho(u))_x (0,t)\varphi(0,t)) \,dt\\
& +\int_0^2\int_0^L (\rho(u))_{xx}\varphi\,dxdt  +\int_0^L u_0(x)\varphi(x,0)\,dx\\
= & \int_0^L (u_0(x)-u(x,0))\psi(x)\,dx.
\end{split}
\]
Thus, (\ref{incond}) is true.

Finally, to see that (\ref{bdry:Dirichlet}) holds, fix any $\omega\in C^\infty_c((0,\infty)).$ Let $T=T_{\omega}>0$ be chosen so large that $\mathrm{spt}(\omega)\subset\subset(0,T)$.
Choose two functions $\psi_0,\psi_1\in C^\infty(\R)$ such that
\[
\psi_0(x)=\left\{ \begin{array}{ll}
            x & \mbox{for $x\le\frac{1}{4}L$} \\[2mm]
            0 & \mbox{for $x\ge\frac{3}{4}L$}
          \end{array}
          \right.
          \;\;\mbox{and}\;\;
\psi_1(x)=\left\{ \begin{array}{ll}
            0 & \mbox{for $x\le\frac{1}{4}L$} \\[2mm]
            -x+L & \mbox{for $x\ge\frac{3}{4}L$}.
          \end{array}
          \right.
\]
For $i=0,1$, define $\varphi_i(x,t)=\psi_i(x)\omega(t)$ for $(x,t)\in\bar\Omega_\infty.$
Then we have from (\ref{eq:main}) that for $i=0,1$,
\[
\begin{split}
0=  & \int_0^L (u(x,T)\varphi_i(x,T)-u(x,0)\varphi_i(x,0))\,dx -\int_0^T\int_0^L u_t\varphi_i\,dxdt \\
& +\int_0^T (\rho(u(L,t))(\varphi_i)_{x}(L,t)- \rho(u(0,t))(\varphi_i)_{x}(0,t))\,dt\\
& -\int_0^T ( (\rho(u))_x (L,t)\varphi_i(L,t)-(\rho(u))_x (0,t)\varphi_i(0,t))\,dt\\
& +\int_0^T\int_0^L (\rho(u))_{xx}\varphi_i\,dxdt  +\int_0^L u_0(x)\varphi_i(x,0)\,dx\\
= & -\int_0^T \rho(u(iL,t))\omega(t)\,dt=-\int_0^\infty \rho(u(iL,t))\omega(t)\,dt;
\end{split}
\]
that is, $\rho(u(x,t))=0$ for all $(x,t)\in\partial\Omega\times(0,\infty)$. Thus, (\ref{bdry:Dirichlet}) follows from (\ref{bdry:Dirichlet-extra}) and the definition of $\rho(s)$.

Summarizing the previous discussion, we have the following.

\begin{prop}
Let $u_0\in C^2(\bar\Omega;[0,1])$ satisfy the compatibility conditions,
\[
u_0(0)=u_0(L)=0,\;\;4\alpha (u_0'(0))^2=u_0''(0), \;\;\mbox{and}\;\; 4\alpha (u_0'(L))^2=u_0''(L).
\]
Assume that $u\in C^{2,1}(\bar\Omega_\infty;[0,1])$ is such that
\begin{equation*}
u\not\in Z_\rho\;\;\mbox{on $\partial\Omega\times(0,\infty)$}.
\end{equation*}
Then $u$ is a global classical solution to problem (\ref{eq:main})(\ref{incond})(\ref{bdry:Dirichlet}) if and only if
\[
\int_0^T\int_0^L (u\varphi_t +\rho(u)\varphi_{xx})\,dxdt+\int_0^L u_0(x)\varphi(x,0)\,dx=0
\]
for each $T>0$ and each $\varphi\in C^\infty(\bar\Omega\times [0,T])$ with
\[
\varphi=0\;\;\mbox{on $(\partial\Omega\times[0,T])\cup (\Omega\times\{t=T\})$}.
\]
\end{prop}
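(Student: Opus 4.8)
The plan is to prove the two implications of the stated equivalence separately, with the forward direction being essentially the computation already carried out in the paragraph preceding the statement, and the backward direction requiring the converse calculations sketched above.

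\smallskip

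\noindent\textbf{($\Rightarrow$) Classical solution implies the weak formulation.} Assume $u\in C^{2,1}(\bar\Omega_\infty;[0,1])$ solves (\ref{eq:main})(\ref{incond})(\ref{bdry:Dirichlet}) classically. Fix $T>0$ and a test function $\varphi\in C^\infty(\bar\Omega\times[0,T])$ vanishing on $(\partial\Omega\times[0,T])\cup(\Omega\times\{t=T\})$. Starting from $0=\int_0^T\int_0^L(u_t-(\rho(u))_{xx})\varphi\,dxdt$, one integrates by parts: in $t$ on the term $u_t\varphi$, producing the boundary contributions at $t=0$ and $t=T$, of which only $-\int_0^L u_0(x)\varphi(x,0)\,dx$ survives because $\varphi(\cdot,T)=0$ and $u(\cdot,0)=u_0$ by (\ref{incond}); and twice in $x$ on the term $(\rho(u))_{xx}\varphi$, which yields spatial boundary terms involving $\varphi$, $\varphi_x$, $\rho(u)$, and $(\rho(u))_x$ at $x=0,L$, all of which vanish since $\varphi$ and $\varphi_x$ — more precisely $\varphi$ itself — vanish on $\partial\Omega$ and, by (\ref{bdry:Dirichlet}) together with $\rho(0)=0$, $\rho(u)=0$ on $\partial\Omega$. (Note $\varphi_x$ need not vanish on $\partial\Omega$, but it multiplies $\rho(u)=0$ there, so the term drops.) What remains is precisely the claimed identity. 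Since $T$ and $\varphi$ were arbitrary, the weak formulation holds.

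\smallskip

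\noindent\textbf{($\Leftarrow$) The weak formulation plus (\ref{bdry:Dirichlet-extra}) implies a classical solution.} Assume $u\in C^{2,1}(\bar\Omega_\infty;[0,1])$ satisfies the integral identity for all admissible $T$ and $\varphi$, and $u\notin Z_\rho$ on $\partial\Omega\times(0,\infty)$. I would recover the three defining properties in turn, exactly along the lines indicated in the excerpt. \emph{Equation (\ref{eq:main}):} for $\varphi\in C_c^\infty(\Omega_\infty)$ choose $T$ large so $\mathrm{spt}(\varphi)\subset\subset\Omega\times(0,T)$; integrating by parts back (all boundary terms vanish because $\varphi$ has compact support), the identity becomes $\int_0^\infty\int_0^L(-u_t+(\rho(u))_{xx})\varphi\,dxdt=0$, and since $u\in C^{2,1}$ the fundamental lemma of the calculus of variations gives $u_t=(\rho(u))_{xx}$ pointwise in $\Omega_\infty$. \emph{Initial condition (\ref{incond}):} for $\psi\in C_c^\infty(\Omega)$ set $\varphi(x,t)=\psi(x)\omega(t)$ with $\omega\in C^\infty(\R)$ equal to $1$ near $t=0$ and $0$ for $t\ge 2$, take $T=2$; using (\ref{eq:main}) now known, integrate by parts and note all spatial boundary terms carry the factor $\psi=0$ on $\partial\Omega$ and the term at $t=2$ vanishes by choice of $\omega$, leaving $\int_0^L(u_0(x)-u(x,0))\psi(x)\,dx=0$; arbitrariness of $\psi$ forces $u(\cdot,0)=u_0$ on $\Omega$, hence on $\bar\Omega$ by continuity. \emph{Boundary condition (\ref{bdry:Dirichlet}):} for $\omega\in C_c^\infty((0,\infty))$ and the two cutoff profiles $\psi_0,\psi_1$ as in the excerpt (each linear, with unit slope, near one endpoint and identically zero near the other), set $\varphi_i=\psi_i(x)\omega(t)$; using (\ref{eq:main}), integrating by parts in space and time, every term drops except the one coming from $\rho(u(iL,t))(\varphi_i)_x(iL,t)$, because $(\psi_i)_x(iL)=\pm 1$ while $\psi_i$ vanishes at the opposite endpoint and all of $u_t\varphi_i$, $(\rho(u))_x\varphi_i$ integrate against functions vanishing at $\partial\Omega$ or are killed by the $t$-boundary; we are left with $\int_0^\infty\rho(u(iL,t))\omega(t)\,dt=0$ for $i=0,1$, whence $\rho(u)=0$ on $\partial\Omega\times(0,\infty)$. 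Finally, invoking the hypothesis (\ref{bdry:Dirichlet-extra}) that $u\notin Z_\rho$ on the boundary and the factorization $\rho(s)=s(\alpha\beta s^2-2\alpha s+1)$, the vanishing of $\rho(u)$ together with $u\notin\{s\in(0,1]:\rho(s)=0\}$ leaves only the possibility $u=0$ there, which is (\ref{bdry:Dirichlet}).

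\smallskip

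\noindent The computations are all routine integrations by parts; the only genuine subtlety — and the reason the compatibility conditions on $u_0$ and the hypothesis (\ref{bdry:Dirichlet-extra}) are imposed — is the last step, where one must pass from $\rho(u)=0$ on $\partial\Omega$ to $u=0$ on $\partial\Omega$: without excluding the nonzero roots of $\rho$ in $(0,1]$ this implication fails, so (\ref{bdry:Dirichlet-extra}) is exactly what makes the weak formulation detect the Dirichlet datum rather than merely $\rho(u)=0$. The compatibility conditions at $t=0$ are what make the two notions of solution meet cleanly at the corners $(0,0)$ and $(L,0)$, ensuring the $C^{2,1}(\bar\Omega_\infty)$ regularity is consistent with the initial and boundary data; they are used implicitly in guaranteeing that the classical solution in the forward direction actually exists in this class and that no hidden corner term obstructs the integrations by parts.
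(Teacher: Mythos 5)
Your proposal is correct and follows essentially the same route as the paper: the forward direction is the direct integration by parts (with the spatial boundary terms killed by $\varphi=0$ and $\rho(u)=0$ on $\partial\Omega$), and the converse recovers the PDE, the initial condition, and finally $\rho(u)=0$ on $\partial\Omega$ via the test functions $\psi(x)\omega(t)$ and $\psi_i(x)\omega(t)$, with hypothesis (\ref{bdry:Dirichlet-extra}) upgrading $\rho(u)=0$ to $u=0$. The only minor difference is that you spell out the role of the $\rho(u)\varphi_x$ boundary term and comment on the compatibility conditions, which the paper leaves implicit.
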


Motivated by this observation, we fix the definition of a global weak solution to problem (\ref{eq:main})(\ref{incond})(\ref{bdry:Dirichlet}) as follows.

\begin{defn}\label{def:global-weak-sol}
Let $u_0\in L^\infty(\Omega;[0,1])$ and $u\in L^\infty(\Omega_\infty;[0,1]).$
\begin{itemize}
\item[(i)] Assume $Z_\rho=\emptyset.$ Then we say that $u$ is a \emph{global weak solution} to problem (\ref{eq:main})(\ref{incond})(\ref{bdry:Dirichlet}) provided that
for each $T>0$ and each $\varphi\in C^\infty(\bar\Omega\times [0,T])$ with
\[
\varphi=0\;\;\mbox{on $(\partial\Omega\times[0,T])\cup (\Omega\times\{t=T\})$},
\]
one has
\begin{equation}\label{eq:weaksol}
\int_0^T\int_0^L (u\varphi_t +\rho(u)\varphi_{xx})\,dxdt+\int_0^L u_0(x)\varphi(x,0)\,dx=0.
\end{equation}
\item[(ii)] Assume $Z_\rho\ne\emptyset$ so that $|Z_\rho|\in\{1,2\}$. Then we say that $u$ is a \emph{global weak solution} to (\ref{eq:main})(\ref{incond})(\ref{bdry:Dirichlet}) provided that
    for each $T>0$ and each $\varphi\in C^\infty(\bar\Omega\times [0,T])$ with
\[
\varphi=0\;\;\mbox{on $(\partial\Omega\times[0,T])\cup (\Omega\times\{t=T\})$},
\]
(\ref{eq:weaksol}) holds and
    that there exists a number $0<\delta<\frac{L}{2}$ with $\min Z_\rho-\delta>0$ such that for each $s_z\in Z_\rho$,
\[
u\not\in[s_z-\delta,s_z+\delta]\;\;\mbox{a.e. in $((0,\delta)\cup(L-\delta,L))\times(0,\infty)$.}
\]
\end{itemize}
\end{defn}

Although
\[
\{(\alpha,\beta)\in[0,1]^2\,|\,Z_{\rho}=Z_{\rho_{\alpha\beta}}\ne\emptyset\}\ne\emptyset,
\]
the adhesion-volume filling pairs $(\alpha,\beta)\in[0,1]^2$ that we mainly consider in this paper satisfy that $Z_{\rho}=Z_{\rho_{\alpha\beta}}=\emptyset$ (see section \ref{sec:mainresult}). Thus, we only have to keep in mind Definition \ref{def:global-weak-sol}(i) even if Definition \ref{def:global-weak-sol}(ii) is included for the sake of completeness.

\section{Classification of equation}\label{sec:classify}
In this section, we classify the types of equation (\ref{eq:main}) as follows. To do so, for any fixed adhesion-volume filling pair $(\alpha,\beta)\in [0,1]^2$, let us write
\[
I^+_{\alpha\beta}:=\{s\in[0,1]\,|\,\sigma(s)>0\}\;\;\mbox{and}\;\; I^-_{\alpha\beta}:=\{s\in[0,1]\,|\,\sigma(s)<0\}.
\]

\underline{\textbf{Case $\beta=1$:}} In this case,
\[
\sigma(s)=3\alpha s^2-4\alpha s+1=3\alpha\Big(s-\frac{2}{3}\Big)^2 +1-\frac{4}{3}\alpha.
\]
\begin{itemize}
\item[(a)] If $0\le \alpha<\frac{3}{4}$, then $\sigma(s)\ge 1-\frac{4}{3}\alpha>0$ for all $s\in[0,1]$; hence, equation (\ref{eq:main}) is \emph{forward parabolic} on $[0,1]$. We may call this case as (F).
\item[(b)] If $\alpha=\frac{3}{4},$ then $\sigma(s)> 1-\frac{4}{3}\alpha=0$ for all $s\in[0,1]\setminus\{\frac{2}{3}\}$ and $\sigma(\frac{2}{3})= 1-\frac{4}{3}\alpha=0$; that is, equation (\ref{eq:main}) is forward parabolic on $[0,1]\setminus\{\frac{2}{3}\}$ and \emph{degenerate} at $s=\frac{2}{3}$. We refer to this case as (FDF).
\item[(c)] If $\frac{3}{4}<\alpha<1,$ then
\[
\sigma(s)\left\{
\begin{array}{ll}
  >0 & \mbox{for $s\in\Big[0,\frac{2\alpha-\sqrt{4\alpha^2-3\alpha}}{3\alpha}\Big)\cup\Big(\frac{2\alpha+\sqrt{4\alpha^2-3\alpha}}{3\alpha},1\Big]=I^+_{\alpha 1}$,} \\[2mm]
  <0 & \mbox{for $s\in\Big(\frac{2\alpha-\sqrt{4\alpha^2-3\alpha}}{3\alpha},\frac{2\alpha+\sqrt{4\alpha^2-3\alpha}}{3\alpha}\Big)=I^-_{\alpha 1}$,} \\[2mm]
  =0 & \mbox{for $s=\frac{2\alpha\pm\sqrt{4\alpha^2-3\alpha}}{3\alpha}$};
\end{array}
\right.
\]
that is, equation (\ref{eq:main}) is forward parabolic on $I^+_{\alpha 1}$, \emph{backward parabolic} on $I^-_{\alpha 1}$, and degenerate at $s=\frac{2\alpha\pm\sqrt{4\alpha^2-3\alpha}}{3\alpha}$. We refer to this case as (FDBDF).
\item[(d)] If $\alpha=1,$ then
\[
\sigma(s)\left\{
\begin{array}{ll}
  >0 & \mbox{for $s\in\big[0,\frac{1}{3}\big)=I^+_{11}$,} \\[2mm]
  <0 & \mbox{for $s\in\big(\frac{1}{3},1\big)=I^-_{11}$,} \\[2mm]
  =0 & \mbox{for $s\in\big\{\frac{1}{3},1\big\}$};
\end{array}
\right.
\]
that is, equation (\ref{eq:main}) is forward parabolic on $I^+_{11}$, backward parabolic on $I^-_{11}$, and degenerate at $s=\frac{1}{3},1$. We refer to this case as (FDBD).
\end{itemize}

\underline{\textbf{Case $\frac{2}{3}<\beta<1$:}} In this case,
\[
\sigma(s)=3\alpha\beta s^2-4\alpha s+1=3\alpha\beta\Big(s-\frac{2}{3\beta}\Big)^2 +1-\frac{4\alpha}{3\beta}.
\]
\begin{itemize}
\item[(a)] If $0\le \alpha<\frac{3}{4}\beta$, then $\sigma(s)\ge 1-\frac{4\alpha}{3\beta}>0$ for all $s\in[0,1]$; hence, equation (\ref{eq:main}) is forward parabolic on $[0,1]$. We refer to this case as (F).
\item[(b)] If $\alpha=\frac{3}{4}\beta,$ then $\sigma(s)> 1-\frac{4\alpha}{3\beta}=0$ for all $s\in[0,1]\setminus\{\frac{2}{3\beta}\}$ and $\sigma(\frac{2}{3\beta})= 1-\frac{4\alpha}{3\beta}=0$; that is, equation (\ref{eq:main}) is forward parabolic on $[0,1]\setminus\{\frac{2}{3\beta}\}$ and \emph{degenerate} at $s=\frac{2}{3\beta}$. We refer to this case as (FDF).
\item[(c)] If $\frac{3}{4}\beta<\alpha<\frac{1}{4-3\beta},$ then
\[
\sigma(s)\left\{
\begin{array}{ll}
  >0 & \mbox{for $s\in\Big[0,\frac{2\alpha-\sqrt{4\alpha^2-3\alpha\beta}}{3\alpha\beta}\Big)\cup\Big(\frac{2\alpha+\sqrt{4\alpha^2-3\alpha\beta}}{3\alpha\beta},1\Big]=I^+_{\alpha \beta}$,} \\[2mm]
  <0 & \mbox{for $s\in\Big(\frac{2\alpha-\sqrt{4\alpha^2-3\alpha\beta}}{3\alpha\beta},\frac{2\alpha+\sqrt{4\alpha^2-3\alpha\beta}}{3\alpha\beta}\Big)=I^-_{\alpha\beta}$,} \\[2mm]
  =0 & \mbox{for $s=\frac{2\alpha\pm\sqrt{4\alpha^2-3\alpha\beta}}{3\alpha\beta}$};
\end{array}
\right.
\]
that is, equation (\ref{eq:main}) is forward parabolic on $I^+_{\alpha\beta}$, backward parabolic on $I^-_{\alpha\beta}$, and degenerate at $s=\frac{2\alpha\pm\sqrt{4\alpha^2-3\alpha\beta}}{3\alpha\beta}$. We refer to this case as (FDBDF).
\item[(d)] If $\alpha=\frac{1}{4-3\beta},$ then
\[
\sigma(s)\left\{
\begin{array}{ll}
  >0 & \mbox{for $s\in\big[0,\frac{1}{4\alpha-1}\big)=I^+_{\alpha\beta}$,} \\[2mm]
  <0 & \mbox{for $s\in\big(\frac{1}{4\alpha-1},1\big)=I^-_{\alpha\beta}$,} \\[2mm]
  =0 & \mbox{for $s\in\big\{\frac{1}{4\alpha-1},1\big\}$};
\end{array}
\right.
\]
that is, equation (\ref{eq:main}) is forward parabolic on $I^+_{\alpha\beta}$, backward parabolic on $I^-_{\alpha\beta}$, and degenerate at $s=\frac{1}{4\alpha-1},1$. We refer to this case as (FDBD).
\item[(e)] If $\frac{1}{4-3\beta}<\alpha\le1,$ then
\[
\sigma(s)\left\{
\begin{array}{ll}
  >0 & \mbox{for $s\in\Big[0,\frac{2\alpha-\sqrt{4\alpha^2-3\alpha\beta}}{3\alpha\beta}\Big)=I^+_{\alpha \beta}$,} \\[2mm]
  <0 & \mbox{for $s\in \Big(\frac{2\alpha-\sqrt{4\alpha^2-3\alpha\beta}}{3\alpha\beta},1\Big]=I^-_{\alpha\beta}$,} \\[2mm]
  =0 & \mbox{for $s=\frac{2\alpha-\sqrt{4\alpha^2-3\alpha\beta}}{3\alpha\beta}$};
\end{array}
\right.
\]
that is, equation (\ref{eq:main}) is forward parabolic on $I^+_{\alpha\beta}$, backward parabolic on $I^-_{\alpha\beta}$, and degenerate at $s=\frac{2\alpha-\sqrt{4\alpha^2-3\alpha\beta}}{3\alpha\beta}$. We refer to this case as (FDB).
\end{itemize}

\underline{\textbf{Case $\beta=\frac{2}{3}$:}} In this case,
\[
\sigma(s)=2\alpha s^2-4\alpha s+1=2\alpha(s-1)^2 +1-2\alpha.
\]
\begin{itemize}
\item[(a)] If $0\le \alpha<\frac{1}{2}$, then $\sigma(s)\ge 1-2\alpha>0$ for all $s\in[0,1]$; hence, equation (\ref{eq:main}) is forward parabolic on $[0,1]$. We refer to this case as (F).
\item[(b)] If $\alpha=\frac{1}{2},$ then $\sigma(s)> 1-2\alpha=0$ for all $s\in[0,1)$ and $\sigma(1)=0$; that is, equation (\ref{eq:main}) is forward parabolic on $[0,1)$ and \emph{degenerate} at $s=1$. We refer to this case as (FD).
\item[(c)] If $\frac{1}{2}<\alpha\le 1,$ then
\[
\sigma(s)\left\{
\begin{array}{ll}
  >0 & \mbox{for $s\in\Big[0,\frac{2\alpha-\sqrt{4\alpha^2-2\alpha}}{2\alpha}\Big)=I^+_{\alpha \frac{2}{3}}$,} \\[2mm]
  <0 & \mbox{for $s\in\Big(\frac{2\alpha-\sqrt{4\alpha^2-2\alpha}}{2\alpha},1\Big]=I^-_{\alpha\frac{2}{3}}$,} \\[2mm]
  =0 & \mbox{for $s=\frac{2\alpha-\sqrt{4\alpha^2-2\alpha}}{2\alpha}$};
\end{array}
\right.
\]
that is, equation (\ref{eq:main}) is forward parabolic on $I^+_{\alpha\frac{2}{3}}$, backward parabolic on $I^-_{\alpha\frac{2}{3}}$, and degenerate at $s=\frac{2\alpha-\sqrt{4\alpha^2-2\alpha}}{2\alpha}$. We refer to this case as (FDB).
\end{itemize}

\underline{\textbf{Case $0<\beta<\frac{2}{3}$:}} In this case,
\[
\sigma(s)=3\alpha\beta s^2-4\alpha s+1=3\alpha\beta\Big(s-\frac{2}{3\beta}\Big)^2 +1-\frac{4\alpha}{3\beta}.
\]
\begin{itemize}
\item[(a)] If $0\le \alpha<\frac{1}{4-3\beta}$, then $\sigma(s)\ge 3\alpha\beta-4\alpha+1>0$ for all $s\in[0,1]$; hence, equation (\ref{eq:main}) is forward parabolic on $[0,1]$. We refer to this case as (F).
\item[(b)] If $\alpha=\frac{1}{4-3\beta},$ then $\sigma(s)> 3\alpha\beta-4\alpha+1=0$ for all $s\in[0,1)$ and $\sigma(1)=0$; that is, equation (\ref{eq:main}) is forward parabolic on $[0,1)$ and degenerate at $s=1$. We refer to this case as (FD).
\item[(c)] If $\frac{1}{4-3\beta}<\alpha\le 1,$ then
\[
\sigma(s)\left\{
\begin{array}{ll}
  >0 & \mbox{for $s\in\Big[0,\frac{2\alpha-\sqrt{4\alpha^2-3\alpha\beta}}{3\alpha\beta}\Big)=I^+_{\alpha \beta}$,} \\[2mm]
  <0 & \mbox{for $s\in\Big(\frac{2\alpha-\sqrt{4\alpha^2-3\alpha\beta}}{3\alpha\beta},1\Big]=I^-_{\alpha \beta}$,} \\[2mm]
  =0 & \mbox{for $s=\frac{2\alpha-\sqrt{4\alpha^2-3\alpha\beta}}{3\alpha\beta};$}
\end{array}
\right.
\]
that is, equation (\ref{eq:main}) is forward parabolic on $I^+_{\alpha\beta}$, backward parabolic on $I^-_{\alpha\beta}$, and degenerate at $s=\frac{2\alpha-\sqrt{4\alpha^2-3\alpha\beta}}{3\alpha\beta}$. We refer to this case as (FDB).
\end{itemize}

\underline{\textbf{Case $\beta=0$:}} In this case,
\[
\sigma(s)=-4\alpha s+1.
\]
\begin{itemize}
\item[(a)] If $0\le \alpha<\frac{1}{4}$, then $\sigma(s)\ge -4\alpha+1>0$ for all $s\in[0,1]$; hence, equation (\ref{eq:main}) is forward parabolic on $[0,1]$. We refer to this case as (F).
\item[(b)] If $\alpha=\frac{1}{4},$ then $\sigma(s)> -4\alpha+1=0$ for all $s\in[0,1)$ and $\sigma(1)=0$; that is, equation (\ref{eq:main}) is forward parabolic on $[0,1)$ and degenerate at $s=1$. We refer to this case as (FD).
\item[(c)] If $\frac{1}{4}<\alpha\le 1,$ then
\[
\sigma(s)\left\{
\begin{array}{ll}
  >0 & \mbox{for $s\in\big[0,\frac{1}{4\alpha}\big)=I^+_{\alpha 0}$,} \\[2mm]
  <0 & \mbox{for $s\in\big(\frac{1}{4\alpha},1\big]=I^-_{\alpha 0}$,} \\[2mm]
  =0 & \mbox{for $s=\frac{1}{4\alpha};$}
\end{array}
\right.
\]
that is, equation (\ref{eq:main}) is forward parabolic on $I^+_{\alpha\beta}$, backward parabolic on $I^-_{\alpha\beta}$, and degenerate at $s=\frac{1}{4\alpha}$. We refer to this case as (FDB).
\end{itemize}

In short, we can summarize the classification of equation (\ref{eq:main}) as in the following figure.

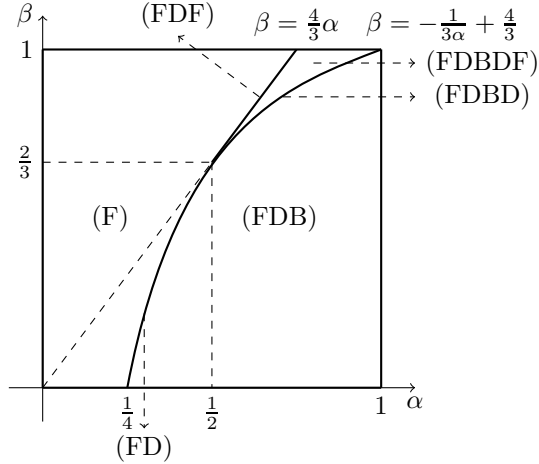
\begin{figure}[ht]
\begin{center}
\begin{tikzpicture}[scale =0.9]
    \draw[->] (-0.5,0) -- (5.5,0);
    \draw[->] (0,-0.5) -- (0,5.5);
    \draw[thick] (0,0) -- (5,0);
    \draw[thick] (5,0) -- (5,5);
    \draw[thick] (5,5) -- (0,5);
    \draw[thick] (0,5) -- (0,0);
    \draw[dashed] (0,0) -- (5/2,10/3);
    \draw[dashed] (0,10/3) -- (5/2,10/3);
    \draw[dashed] (5/2,10/3) -- (5/2,0);
    \draw[thick] (5/2,10/3) -- (15/4,5);
    \draw[thick] (5/4,0) .. controls (2,3.95) and  (3.7,4.5) .. (5,5);
    \draw (0,5.5) node[left] {$\beta$};
    \draw (5.5,0) node[below] {$\alpha$};
    \draw (0,5) node[left] {$1$};
    \draw (5,0) node[below] {$1$};
    \draw (0,10/3) node[left] {$\frac{2}{3}$};
    \draw (5/2,0) node[below] {$\frac{1}{2}$};
    \draw (2.5/2,0) node[below] {$\frac{1}{4}$};
    \draw (15/4,5) node[above] {$\beta=\frac{4}{3}\alpha$};
    \draw (5.9,5) node[above] {$\beta=-\frac{1}{3\alpha}+\frac{4}{3}$};
    \draw (1,5/2) node[] {(F)};
    \draw (3.5,5/2) node[] {(FDB)};
    \draw[dashed,->] (4,4.8) -- (5.5,4.8);
    \draw[dashed,->] (3.55,4.3) -- (5.5,4.3);
    \draw[dashed,->] (3.2,4.3) -- (2,5.2);
    \draw[dashed,->] (1.5,1.1) -- (1.5,-0.6);
    \draw (6.5,4.8) node[] {(FDBDF)};
    \draw (6.5,4.3) node[] {(FDBD)};
    \draw (2,5.5) node[] {(FDF)};
    \draw (1.5,-0.9) node[] {(FD)};

    \end{tikzpicture}
\end{center}
\caption{Classification of equation (\ref{eq:main}).}
\label{fig1}
\end{figure}

\section{Main result}\label{sec:mainresult}
From this section, we mainly study problem (\ref{eq:main})(\ref{incond})(\ref{bdry:Dirichlet}) of type (FDBDF); that is, the initial-Dirichlet boundary value problem in one space dimension,
\begin{equation}\label{ib-P}
\begin{cases}
u_{t} =(\sigma(u)u_x)_x=(\rho(u))_{xx} & \mbox{in $\Omega_\infty$,}\\
u =u_0 & \mbox{on $\Omega\times \{t=0\}$},\\
u=0 & \mbox{on $\partial\Omega\times(0,\infty)$,}
\end{cases}
\end{equation}
where $u_0:\Omega\to [0,1]$ is a given initial population density, $u(x,t)\in[0,1]$ represents the population density at a space-time point $(x,t)\in\Omega_\infty$, the diffusivity $\sigma:\R\to\R$ is given by
\[
\sigma(s)=3\alpha\beta s^2- 4\alpha s +1\;\;(s\in\R)
\]
for some constants $\frac{2}{3}<\beta\le 1$ and $\frac{3}{4}\beta<\alpha<\frac{1}{4-3\beta}$, and
\[
\rho(s)=\alpha\beta s^3-2\alpha s^2+s \;\;(s\in\R).
\]

Letting
\[
s^\pm_{0}=\frac{2\alpha \pm \sqrt{4\alpha^2 -3\alpha\beta}}{3\alpha\beta},
\]
we observe that $0<s^-_0<s^+_0<1$ and that
\begin{equation}\label{sigma-1}
\sigma(s) \left\{ \begin{array}{ll}
                    >0 & \mbox{for $s\in I^+_{\alpha\beta}=[0,s^-_0)\cup(s^+_0,1]$,} \\[1mm]
                    <0 & \mbox{for $s\in I^-_{\alpha\beta}=(s^-_0,s^+_0)$,} \\[1mm]
                    =0 & \mbox{for $s\in Z_{\sigma}=\{s^+_0,s^-_0\}$.}
                  \end{array}
\right.
\end{equation}
We check below that
\begin{equation}\label{ineq-rho}
\rho(s)>0\;\;\forall s\in(0,1];
\end{equation}
hence, our global weak solutions to problem (\ref{ib-P}) should be as in Definition \ref{def:global-weak-sol}(i).
Note from $\frac{2}{3}<\beta\le 1$ that
\[
3\beta^2-4\beta+1=(3\beta-1)(\beta-1)\le 0
\]
so that
\[
\alpha<\frac{1}{4-3\beta}\le\beta;
\]
thus, $\alpha^2-\alpha\beta=\alpha(\alpha-\beta)<0$. This implies that the equation $\rho(s)=s(\alpha\beta s^2-2\alpha s +1)=0$ has precisely one zero $s=0$ in $\R$; hence, inequality (\ref{ineq-rho}) holds.

Let
\[
r^*=\min\{\rho(s^-_0),\rho(1)\};
\]
then from (\ref{sigma-1}) and (\ref{ineq-rho}), we have
\[
r^*>\rho(s^+_0)>0.
\]
For each $r\in[\rho(s^+_0),r^*],$ let $s^+(r)\in[s^+_0,1]$ and $s^-(r)\in(0,s^-_0]$ denote the unique numbers with
\[
\rho(s^\pm(r))=r.
\]
Let us write
\[
s^\pm_1:=s^\pm(\rho(s^+_0))\;\;\mbox{and}\;\; s^\pm_2:=s^\pm(r^*);
\]
then
\[
0<s^-_1<s^-_2\le s^-_0<s^+_0 =s^+_1<s^+_2\le 1,
\]
and
\[
s^-_2= s^-_0\;\;\mbox{or}\;\; s^+_2= 1
\]
(see Figure \ref{fig2}).

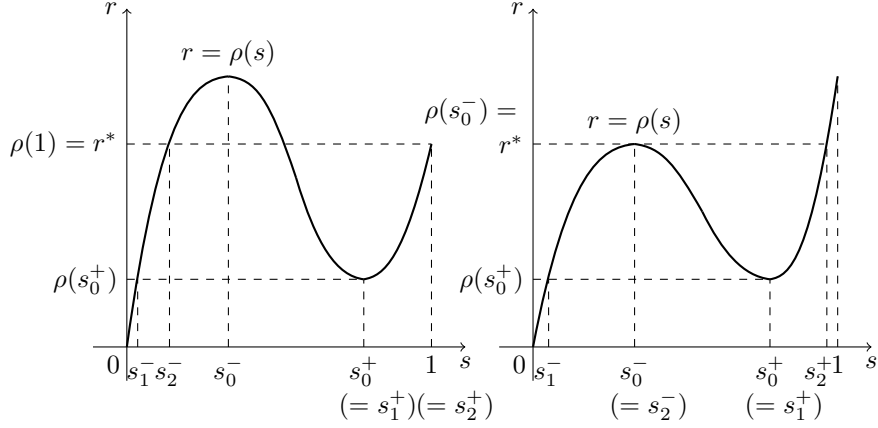
\begin{figure}[ht]
\begin{center}
\begin{tikzpicture}[scale =0.9]
    \draw[->] (-0.5,0) -- (5,0);
    \draw[->] (5.5,0) -- (11,0);
    \draw[->] (0,-0.5) -- (0,5);
    \draw[->] (6,-0.5) -- (6,5);
 \draw[dashed] (0,1)--(3.5,1);
 \draw[dashed] (0,3)--(4.5,3);
 \draw[dashed] (1.5,0)--(1.5,4);
 \draw[dashed] (3.5,0)--(3.5,1);
 \draw[dashed] (4.5,0)--(4.5,3);
 \draw[dashed] (0.16,0)--(0.16,1);
 \draw[dashed] (0.63,0)--(0.63,3);
 \draw (1.5, 0) node[below] {$s^-_0$};
 \draw (3.5, 0) node[below] {$s^+_0$};
 \draw (3.71, -0.5) node[below] {$(=s^+_1)$};
 \draw (4.85, -0.5) node[below] {$(=s^+_2)$};
 \draw (0, 3) node[left] {$\rho(1)=r^*$};
 \draw (0, 1) node[left] {$\rho(s^+_0)$};
 \draw (0.22, 0) node[below] {$s^-_1$};
 \draw (0.63, 0) node[below] {$s^-_2$};
 \draw[dashed] (6,1)--(9.5,1);
 \draw[dashed] (6,3)--(10.3,3);
 \draw[dashed] (7.5,0)--(7.5,3);
 \draw[dashed] (9.5,0)--(9.5,1);
 \draw[dashed] (10.5,0)--(10.5,4);
 \draw[dashed] (6.23,0)--(6.23,1);
 \draw[dashed] (10.34,0)--(10.34,3);
 \draw (7.5, 0) node[below] {$s^-_0$};
 \draw (9.5, 0) node[below] {$s^+_0$};
 \draw (9.71, -0.5) node[below] {$(=s^+_1)$};
 \draw (7.71, -0.5) node[below] {$(=s^-_2)$};
 \draw (6, 3) node[left] {$r^*$};
 \draw (5.9, 3.5) node[left] {$\rho(s^-_0)=$};
 \draw (6, 1) node[left] {$\rho(s^+_0)$};
 \draw (6.23, 0) node[below] {$s^-_1$};
	\draw[thick]   (0, 0) .. controls (0.5,3.6) and  (1,3.95)   ..(1.5,4);
    \draw[thick]   (1.5,4) .. controls (1.9,3.95) and  (2.1,3.7)   ..(2.5,2.5);
    \draw[thick]   (2.5,2.5) .. controls (2.8,1.5) and  (3.1,1.05)   ..(3.5,1);
	\draw[thick]   (3.5,1) .. controls  (3.8,1.05) and (4.1,1.3) ..(4.5,3);
    \draw[thick]   (6, 0) .. controls (6.5,2.6) and  (7,2.95)   ..(7.5,3);
    \draw[thick]   (7.5,3) .. controls (7.9,2.95) and  (8.1,2.7)   ..(8.5,2);
    \draw[thick]   (8.5,2) .. controls (8.8,1.4) and  (9.1,1.05)   ..(9.5,1);
	\draw[thick]   (9.5,1) .. controls  (9.8,1.05) and (10.1,1.3) ..(10.5,4);
	\draw (11,0) node[below] {$s$};
    \draw (5,0) node[below] {$s$};
    \draw (1.5, 4) node[above] {$r=\rho(s)$};
    \draw (7.5, 3) node[above] {$r=\rho(s)$};
   \draw (4.5, 0) node[below] {$1$};
   \draw (10.5, 0) node[below] {$1$};
   \draw (10.22, 0) node[below] {$s^+_2$};
   \draw (0, 5) node[left] {$r$};
   \draw (6, 5) node[left] {$r$};
   \draw (-0.2, 0) node[below] {$0$};
   \draw (5.8, 0) node[below] {$0$};
    \end{tikzpicture}
\end{center}
\caption{Two possible graphs of  $r=\rho(s)$ $(0\le s\le 1)$.}
\label{fig2}
\end{figure}



\underline{\textbf{Initial population density:}} We assume that the initial population density $u_0$ to problem (\ref{ib-P}) fulfills the regularity condition,
\[
u_0\in C^{2+a}(\bar\Omega;[0,1])\;\;\mbox{for some $0<a<1$}
\]
and the compatibility conditions,
\[
u_0(0)=u_0(L)=0,\;\; 4\alpha(u_0'(0))^2=u_0''(0),\;\;\mbox{and}\;\; 4\alpha(u_0'(L))^2=u_0''(L).
\]
We write
\[
M_0:=\max_{\bar{\Omega}}u_0 \in[0,1].
\]

From the classical parabolic theory, we can handle the following case that the maximum value $M_0$ of the initial population density $u_0$ is strictly less than the infimum $s^-_0$ of the backward regime $(s^-_0,s^+_0)$.

\begin{thm}[Case $M_0<s^-_0$: smooth extinction]\label{thm:classical} Assume $M_0<s^-_0.$ Then there exists a unique global classical solution $u\in C^{2,1}(\bar\Omega_\infty;[0,1])$ to problem (\ref{ib-P}) such that
\[
u\in C^{2+a,1+\frac{a}{2}}(\bar\Omega_T;[0,1])
\]
for all $T>0$ and that $0\le u\le M_0$ in $\Omega_\infty$.
Moreover, it follows that
\[
0\le u(x,t)\le \max_{\bar{\Omega}}u(\cdot,s)
\]
for all $x\in\Omega$ and $0\le s\le t$ and that there exist two constants $C\ge0$ and $\gamma>0$, depending only on $M_0$, $L$, $\alpha$, and $\beta$, such that
\[
\|u(\cdot,t)\|_{L^\infty(\Omega)}\le C e^{-\gamma t}
\]
for all $t\ge0$.
\end{thm}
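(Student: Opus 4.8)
The plan is to reduce the statement to classical uniformly‑parabolic theory after confining the solution to the forward regime $[0,M_0]\subset[0,s^-_0)$, on which $\sigma$ is trapped between $\sigma_{\min}:=\min_{[0,M_0]}\sigma>0$ and $\max_{[0,M_0]}\sigma=\sigma(0)=1$. \emph{Local solvability.} Since $\sigma(u_0)>0$, equation (\ref{ib-P}) is uniformly parabolic near $t=0$, and the hypothesis $u_0\in C^{2+a}(\bar\Omega)$ together with the stated compatibility conditions — which are exactly the zeroth‑ and first‑order compatibility relations, since $u_t|_{\partial\Omega}=(\rho(u_0))_{xx}|_{\partial\Omega}=\rho''(0)(u_0')^2+\rho'(0)u_0''=u_0''-4\alpha(u_0')^2$ must vanish — puts us in the scope of the classical Schauder theory for quasilinear parabolic initial–boundary value problems, yielding a unique solution $u\in C^{2+a,1+\frac{a}{2}}(\bar\Omega_{T_0})$ on a maximal time interval $[0,T_{\max})$ with $T_{\max}\in(0,\infty]$.

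\emph{Confinement $0\le u\le M_0$.} This is the crux, and the argument works verbatim for any classical solution in $C^{2,1}(\bar\Omega_\infty;[0,1])$ (used for uniqueness below) as well as for the local solution above. Set $V:=\{(x,t)\in\bar\Omega\times[0,T_{\max}):u(x,t)<s^-_0\}$, an open set containing $\bar\Omega\times\{0\}$, and $t_1:=\sup\{t:\bar\Omega\times[0,t]\subset V\}>0$. On $\bar\Omega\times[0,t_1)$ the equation is uniformly parabolic, and since the constant functions $0$ and $M_0$ satisfy $u_t=(\rho(u))_{xx}$ with $0\le u_0\le M_0$ at $t=0$ and $0=u\le M_0$ on $\partial\Omega\times(0,t_1)$, the comparison principle gives $0\le u\le M_0$ on $\bar\Omega\times[0,t_1)$; hence $u(\cdot,t_1)\le M_0<s^-_0$ by continuity, so $\bar\Omega\times[0,t_1]\subset V$, and were $t_1<T_{\max}$, openness of $V$ and compactness of $\bar\Omega\times[0,t_1]$ would give $\bar\Omega\times[0,t_1+\eta]\subset V$ for some $\eta>0$, contradicting the choice of $t_1$. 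Thus $t_1=T_{\max}$ and $0\le u\le M_0$ on $[0,T_{\max})$. Applying the same comparison on a slab $[s,t]$ with constant supersolution $\max_{\bar\Omega}u(\cdot,s)\le M_0<s^-_0$ gives the monotonicity $0\le u(x,t)\le\max_{\bar\Omega}u(\cdot,s)$ for $0\le s\le t$.

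\emph{Globality and uniqueness.} On $[0,T_{\max})$ the equation is now uniformly parabolic with ellipticity constants $\sigma_{\min}$ and $1$ and smooth bounded nonlinearity on $[0,M_0]$; the De Giorgi–Nash–Moser estimates, up to the lateral boundary where $u=0$, give a H\"older bound for $u$ on $\bar\Omega\times[0,T_{\max})$ with constant independent of $T_{\max}$, and then linear Schauder estimates with H\"older coefficient $\sigma(u)$, together with the $C^{2+a}$ datum and compatibility near $t=0$, upgrade this to a uniform $C^{2+a,1+\frac{a}{2}}$ bound on $\bar\Omega\times[0,T_{\max})$. Hence $u$ extends as a classical solution up to $t=T_{\max}$, so re‑applying the local result there forces $T_{\max}=\infty$, producing $u\in C^{2,1}(\bar\Omega_\infty;[0,1])$ with $u\in C^{2+a,1+\frac{a}{2}}(\bar\Omega_T;[0,1])$ for every $T>0$ and $0\le u\le M_0$ in $\Omega_\infty$. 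For uniqueness, given two such solutions the confinement puts both in $[0,M_0]$, so the difference $w=u_1-u_2$ obeys $\frac{d}{dt}\|w\|_{L^2(\Omega)}^2=-2\int_\Omega(\rho(u_1)-\rho(u_2))_x\,w_x\,dx\le C\|w\|_{L^2(\Omega)}^2$, using $\rho'(u_1)=\sigma(u_1)\ge\sigma_{\min}$, $|\rho'(u_1)-\rho'(u_2)|\le C|w|$, and boundedness of $(u_2)_x$; since $w(\cdot,0)=0$, Gr\"onwall gives $w\equiv0$.

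\emph{Exponential decay.} Testing $u_t=(\sigma(u)u_x)_x$ with $u$ and using $u=0$ on $\partial\Omega$, $\sigma(u)\ge\sigma_{\min}$, and the Poincar\'e inequality $\|u_x\|_{L^2(\Omega)}^2\ge(\pi/L)^2\|u\|_{L^2(\Omega)}^2$, one obtains
\[
\frac{d}{dt}\|u(\cdot,t)\|_{L^2(\Omega)}^2=-2\int_\Omega\sigma(u)u_x^2\,dx\le-2\gamma\,\|u(\cdot,t)\|_{L^2(\Omega)}^2,\qquad \gamma:=\frac{\sigma_{\min}\pi^2}{L^2},
\]
hence $\|u(\cdot,t)\|_{L^2(\Omega)}^2\le LM_0^2\,e^{-2\gamma t}$. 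Regarding $u$ as a nonnegative subsolution of the linear divergence‑form equation $u_t=(a(x,t)u_x)_x$ with $a:=\sigma(u)\in[\sigma_{\min},1]$, vanishing on $\partial\Omega\times(0,\infty)$, the parabolic local‑boundedness estimate gives, for $t\ge1$, $\|u(\cdot,t)\|_{L^\infty(\Omega)}\le C_1\bigl(\int_{t-1}^{t}\|u(\cdot,s)\|_{L^2(\Omega)}^2\,ds\bigr)^{1/2}\le C\,e^{-\gamma t}$, with $C_1$ and $C$ depending only on $\sigma_{\min}$, $L$, $M_0$; while for $0\le t\le1$ one simply uses $\|u(\cdot,t)\|_{L^\infty(\Omega)}\le M_0\le M_0e^{\gamma}e^{-\gamma t}$. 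Enlarging $C$ to cover both ranges yields $\|u(\cdot,t)\|_{L^\infty(\Omega)}\le C e^{-\gamma t}$ for all $t\ge0$; since $\sigma_{\min}=\sigma(M_0)$ (as $\sigma$ is decreasing on $[0,M_0]\subset[0,\tfrac{2}{3\beta})$), the constants $C,\gamma$ depend only on $M_0$, $L$, $\alpha$, $\beta$.

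The parts needing the most care are the uniform‑in‑time parabolic estimate that secures globality (ruling out finite‑time breakdown) and the local‑boundedness estimate that upgrades $L^2$‑decay to $L^\infty$‑decay; the remainder is comparison‑principle bookkeeping built on the confinement $0\le u\le M_0<s^-_0$.
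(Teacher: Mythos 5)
Your proof is correct, but it takes a genuinely different route from the paper's. The paper does not argue directly on the degenerate/backward nonlinearity at all: it picks $M_1=\frac{M_0+s_0^-}{2}$, replaces $\rho$ by a globally uniformly parabolic $\rho^\star\in C^3(\R)$ with $\rho^\star=\rho$ on $(-\infty,M_1]$ and $(\rho^\star)'\ge c_0>0$, and then cites two black boxes — Lieberman's Theorem 12.14 for global well-posedness of the modified problem, and Kim's Theorem 1.1 for the ordering $0\le u^\star(x,t)\le\|u^\star(\cdot,s)\|_{L^\infty}$ and the exponential decay — before observing that the solution of the modified problem stays in $[0,M_0]\subset(-\infty,M_1]$ and hence solves the original one; uniqueness is inherited from the modified problem within the class $0\le\tilde u\le M_0$. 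You instead work with the original equation throughout: local Schauder solvability, an invariant-region/continuation argument via comparison with the constants $0$ and $M_0$, a continuation criterion for globality, an $L^2$--Gr\"onwall uniqueness proof, and an energy--Poincar\'e decay estimate upgraded to $L^\infty$ by a local boundedness (Moser) estimate. What your approach buys is self-containedness (modulo standard parabolic theory) and a slightly stronger uniqueness statement, since your confinement step shows that \emph{every} classical solution with values in $[0,1]$ automatically satisfies $0\le u\le M_0$, whereas the paper assumes this bound on the competitor; what the paper's approach buys is brevity and the fact that the maximum-principle ordering and the decay rate come packaged from the cited results. Two small points of care in your write-up: ``uniformly parabolic on $\bar\Omega\times[0,t_1)$'' should be read as uniformly parabolic on each compact sub-slab (which is all the comparison principle needs), and the uniform-in-time Schauder bound asserted for globality is more than is required — an $L^\infty$ bound plus uniform parabolicity on the range of $u$ already triggers the standard continuation criterion, which is exactly the content of the theorem the paper cites.
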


\begin{proof}
Let $M_1=\frac{M_0+s^-_0}{2}.$
By elementary calculus, we can choose a function $\rho^\star\in C^3(\R)$ such that
\[
\rho^\star=\rho\;\;\mbox{on $(-\infty,M_1]$}\;\;\mbox{and}\;\;(\rho^\star)'\ge c_0\;\;\mbox{in $\R$,}
\]
for some constant $c_0>0.$
Then from \cite[Theorem 12.14]{Ln}, the modified problem,
\begin{equation*}
\begin{cases}
u^\star_{t} =(\rho^\star(u^\star))_{xx} & \mbox{in $\Omega_\infty$,}\\
u^\star =u_0 & \mbox{on $\Omega\times \{t=0\}$},\\
u^\star=0 & \mbox{on $\partial\Omega\times(0,\infty)$,}
\end{cases}
\end{equation*}
admits a unique global classical solution $u^\star\in C^{2,1}(\bar\Omega_\infty;[0,1])$ with
\[
u^\star\in C^{2+a,1+\frac{a}{2}}(\bar\Omega_T;[0,1])\;\;\forall T>0.
\]
Also, it follows from  \cite[Theorem 1.1]{Ki} that
\begin{equation*}
0 \le u^\star(x,t)\le\|u^\star(\cdot,s)\|_{L^\infty(\Omega)}\;\;\mbox{for all $x\in\bar\Omega$ and $t\ge s\ge0$}
\end{equation*}
and that
\begin{equation*}
\|u^\star(\cdot,t)\|_{L^\infty(\Omega)}\le C e^{-\gamma t}\;\;\mbox{for all $t\ge0$},
\end{equation*}
for some constants $C\ge 0$ and $\gamma>0$, depending only on $M_0$, $L$, $\alpha$, and $\beta$. In particular, $0\le u^\star\le M_0<M_1$ in $\Omega_\infty;$
thus, from the choice of $\rho^\star,$ we see that $u:=u^\star$ is a global classical solution to problem (\ref{ib-P}).

Suppose $\tilde{u}\in C^{2,1}(\bar\Omega_\infty;[0,1])$ is a unique global classical solution to problem (\ref{ib-P}) such that
\[
\tilde{u}\in C^{2+a,1+\frac{a}{2}}(\bar\Omega_T;[0,1])
\]
for all $T>0$ and that $0\le \tilde{u}\le M_0$ in $\Omega_\infty$. Then from the choice of $\rho^\star,$ $\tilde{u}$ is also a global classical solution to the modified problem above. By uniqueness of the modified problem, we have $\tilde{u}=u^\star=u$ in $\Omega_\infty$.
\end{proof}

As the main result of the paper, we present the following theorem on the case that the maximum value $M_0$ of the initial population density $u_0$ exceeds the threshold $s^-_1$. A proof of this theorem is given in section \ref{sec:setup}.

\begin{thm}[Case $M_0>s^-_1$: density mixtures and smooth extinction]\label{thm:main}
Assume $M_0>s^-_1.$
Let $r_1\in[\rho(s^+_0),r^*)$ be any number such that
\[
s^-(r_1)<M_0,
\]
and let $r_2\in(r_1,r^*]$.
Then there exist a function $u^\star\in C^{2,1}(\bar\Omega_\infty;[0,1])$ with
\[
u^\star\in C^{2+a,1+\frac{a}{2}}(\bar\Omega_T;[0,1])\;\;\forall T>0,
\]
a nonempty bounded open set $Q\subset\Omega_\infty$ with
\[
\bar{Q}\subset \Omega\times[0,\infty)\;\;\mbox{and}\;\;\bar{Q}\cap (\Omega\times\{0\})\ne\emptyset,
\]
and infinitely many global weak solutions $u\in L^{\infty}(\Omega_\infty;[0,1])$ to problem (\ref{ib-P}) satisfying the following:
\begin{itemize}
\item[(a)] \underline{\emph{Smoothing in finite time:}}
\[
u=u^\star\;\;\mbox{in $\Omega_\infty\setminus \bar{Q};$}
\]
\item[(b)] \underline{\emph{Density mixtures:}}
\[
u\in[s^-(r_1),s^-(r_2)]\cup[s^+(r_1),s^+(r_2)]\;\;\mbox{a.e. in $Q;$}
\]
\item[(c)] \underline{\emph{Fine-scale oscillations:}} for any nonempty open set $O\subset Q,$
\[
\underset{O}{\mathrm{ess\,osc}}\, u:=\underset{O}{\mathrm{ess\,sup}}\, u-\underset{O}{\mathrm{ess\,inf}}\, u \ge s^+(r_1)-s^-(r_2)>0;
\]
\item[(d)] \underline{\emph{Nonincreasing total population:}}
\[
\int_\Omega u(x,t)\,dx= \int_\Omega u^\star(x,t)\,dx\;\;\forall t\ge 0,
\]
and the function $t\mapsto \int_\Omega u^\star(x,t)\,dx$ is nonincreasing on $[0,\infty);$
\item[(e)] \underline{\emph{Maximum principle:}}
\[
0\le u^\star(x,t)\le\|u^\star(\cdot,s)\|_{L^\infty(\Omega)}\le 1\;\;\forall x\in\bar\Omega,\,\forall t\ge s\ge0;
\]
\item[(f)] \underline{\emph{Exponential and smooth extinction:}} there exists two constants $C>0$ and $\gamma>0$ such that
\[
\|u^\star(\cdot,t)\|_{L^\infty(\Omega)}\le C e^{-\gamma t}\;\;\forall t\ge0.
\]
\end{itemize}
\end{thm}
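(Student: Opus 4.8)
The plan is to combine a \emph{subsolution} construction with the convex integration method of M\"uller \& \v Sver\'ak \cite{MSv2}, packaged here as Theorem~\ref{thm:two-wall}. First note that, since $M_0>s^-_1=s^-(\rho(s^+_0))$ and $r\mapsto s^-(r)$ is continuous and increasing, numbers $r_1\in[\rho(s^+_0),r^*)$ with $s^-(r_1)<M_0$ exist; fix such an $r_1$ and any $r_2\in(r_1,r^*]$. The branches $s=s^-(r)$ and $s=s^+(r)$ ($r\in[r_1,r_2]$) of the level set $\{\rho=r\}$ bound a curved ``lens''
\[
\mathcal L:=\{(s,r):\ r\in[r_1,r_2],\ s^-(r)\le s\le s^+(r)\}
\]
in the $(u,\rho(u))$-plane, whose two ``walls'' $\{(s^\pm(r),r):r\in[r_1,r_2]\}$ are exactly the values allowed for $u$ a.e.\ in $Q$ (property (b)). The first step is to rewrite the scalar law $u_t=(\rho(u))_{xx}$, via space--time potentials for the divergence-free fields it generates, as a first-order partial differential inclusion $\nabla z\in K$ a.e.\ in $Q$ for a vector-valued potential $z$, where $K$ is the ``two-wall'' set attached to these walls; the reformulation is arranged so that any $z$ solving this inclusion with boundary datum $z=z^\star$ on $\partial Q$ produces, after gluing with the subsolution on $\Omega_\infty\setminus\bar Q$, a global weak solution of \eqref{ib-P} in the sense of Definition~\ref{def:global-weak-sol}(i) (here $Z_\rho=\emptyset$).

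\textbf{Subsolution and the set $Q$.} By elementary calculus, choose $\rho^\star\in C^3(\R)$ with $(\rho^\star)'\ge c_0>0$ on $\R$ such that $\rho^\star=\rho$ on $(-\infty,s^-(r_1)]\cup[s^+(r_2),\infty)$, while on $(s^-(r_1),s^+(r_2))$ one has $r_1<\rho^\star<r_2$, $\rho^\star<\rho$ where $s\le s^-_0$, and $\rho^\star>\rho$ where $s\ge s^+_0$. These sign conditions are precisely what force, whenever $u^\star\in(s^-(r_1),s^+(r_2))$, the \emph{strict} inclusions $s^-(\rho^\star(u^\star))<u^\star<s^+(\rho^\star(u^\star))$ and $r_1<\rho^\star(u^\star)<r_2$, i.e.\ $(u^\star,\rho^\star(u^\star))$ in the interior of $\mathcal L$. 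As $\rho^\star=\rho$ near $s=0$, the data $u_0$ still satisfies the compatibility conditions for the uniformly parabolic problem $u^\star_t=(\rho^\star(u^\star))_{xx}$, $u^\star|_{t=0}=u_0$, $u^\star|_{\partial\Omega}=0$; exactly as in Theorem~\ref{thm:classical}, \cite{Ln,Ki} then give a unique solution $u^\star\in C^{2,1}(\bar\Omega_\infty;[0,1])\cap\bigcap_{T>0}C^{2+a,1+\frac a2}(\bar\Omega_T;[0,1])$ with $0\le u^\star\le M_0$ satisfying the maximum principle (e) and the exponential decay (f). Set
\[
Q:=\{(x,t)\in\Omega_\infty:\ s^-(r_1)<u^\star(x,t)<s^+(r_2)\}.
\]
Then $Q$ is open, bounded in $t$ by (f) and bounded away from $\partial\Omega$ because $u^\star|_{\partial\Omega}=0<s^-(r_1)$, so $\bar Q\subset\Omega\times[0,\infty)$; and $\bar Q\cap(\Omega\times\{0\})\ne\emptyset$ since $u_0$ is continuous with $u_0(0)=0<s^-(r_1)<M_0=\max u_0$, making $\{u_0\in(s^-(r_1),s^+(r_2))\}$ nonempty. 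Moreover $\rho^\star=\rho$ on $\Omega_\infty\setminus Q$ and on $\partial Q$, so $u^\star$ solves \eqref{ib-P} classically on $\Omega_\infty\setminus\bar Q$.

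\textbf{Convex integration; properties (a)--(c).} Let $z^\star$ be the potential associated with $(u^\star,\rho^\star(u^\star))$. By the previous step $\nabla z^\star$ lies, over $Q$, in the interior of the relevant rank-one hull of $K$, which is the situation to which Theorem~\ref{thm:two-wall} applies; it yields infinitely many $z$ with $\nabla z\in K$ a.e.\ in $Q$, $z=z^\star$ on $\partial Q$, and oscillations dense in $Q$. Transcribing back and setting $u:=u^\star$ on $\Omega_\infty\setminus\bar Q$ gives infinitely many global weak solutions of \eqref{ib-P}: (a) holds by construction together with the regularity of $u^\star$; (b) holds because $\nabla z\in K$ encodes exactly $u\in\{s^-(r):r\in[r_1,r_2]\}\cup\{s^+(r):r\in[r_1,r_2]\}=[s^-(r_1),s^-(r_2)]\cup[s^+(r_1),s^+(r_2)]$; and (c) holds because the construction visits both walls on sets of positive measure in every nonempty open $O\subset Q$, so $\underset{O}{\mathrm{ess\,sup}}\,u\ge s^+(r_1)$ and $\underset{O}{\mathrm{ess\,inf}}\,u\le s^-(r_2)$, whence $\underset{O}{\mathrm{ess\,osc}}\,u\ge s^+(r_1)-s^-(r_2)>0$ because $s^-(r_2)\le s^-_0<s^+_0\le s^+(r_1)$.

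\textbf{Properties (d)--(f) and the main obstacle.} Properties (e) and (f) are the maximum principle and exponential decay of $u^\star$ quoted above from \cite{Ki}. For (d), $\int_\Omega u(\cdot,t)=\int_\Omega u^\star(\cdot,t)$ because $u=\partial_x v$ and $u^\star=\partial_x v^\star$ for potentials that coincide near $\partial\Omega$ (which $Q$ does not reach), so both integrals equal $v(L,t)-v(0,t)=v^\star(L,t)-v^\star(0,t)$; and $t\mapsto\int_\Omega u^\star(\cdot,t)\,dx$ is nonincreasing since
\[
\frac{d}{dt}\int_\Omega u^\star\,dx=(\rho^\star(u^\star))_x(L,t)-(\rho^\star(u^\star))_x(0,t)\le 0
\]
using $(\rho^\star)'>0$ and the Hopf-type signs $u^\star_x(0,t)\ge0\ge u^\star_x(L,t)$ forced by $u^\star\ge0=u^\star|_{\partial\Omega}$. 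Within this argument the delicate points are the calibration of $\rho^\star$ so that $(u^\star,\rho^\star(u^\star))$ stays strictly inside $\mathcal L$ on all of $Q$, and the reformulation ensuring that inclusion solutions are genuine weak solutions with the correct initial and boundary behaviour (one checks that matching the potential on $\partial Q$, including on $\bar Q\cap\{t=0\}$, reproduces the $u_0$-term in \eqref{eq:weaksol}). The real obstacle, however, is Theorem~\ref{thm:two-wall} itself --- the convex integration solvability of the two-wall inclusion --- which is the core analysis carried out in Sections~\ref{sec:proof-main-thm}--\ref{sec:proof-main-lem}.
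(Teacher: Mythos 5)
Your proposal is correct and follows essentially the same route as the paper: modify $\rho$ to a uniformly parabolic $\rho^\star$ calibrated so that $(u^\star,\rho^\star(u^\star))$ stays strictly inside the lens over $Q$, obtain $u^\star$ and properties (d)--(f) from the classical parabolic theory of \cite{Ln,Ki}, pass to the potentials $z^\star=(v^\star,w^\star)$, and invoke Theorem \ref{thm:two-wall} on $Q$ to produce the inclusion solutions, which transcribe into weak solutions satisfying (a)--(c). All the key verifications you flag (the strict subsolution property of $z^\star$, the geometry of $Q$, and the integration by parts recovering the $u_0$-term in \eqref{eq:weaksol}) are exactly the ones carried out in Section \ref{sec:setup}.
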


Since $Q\ne \emptyset$ is a bounded subset of $\Omega_\infty$, it follows from (a) that the solutions $u$ are smooth and identical on $\bar\Omega\times [t^*,\infty)$, where
\[
0<t^*:=\sup_{(x,t)\in Q} t<\infty.
\]
So properties (e) and (f) are valid for the solutions $u$ when $t\ge t^*.$
Since $\bar{Q}\subset \Omega\times[0,\infty)$, observe from (a) that the solutions $u$ are smooth and identical near the cylindrical boundary $\partial\Omega\times[0,\infty)$; thus, $u=0$ pointwise on $\partial\Omega\times[0,\infty)$. From (b), (c), and $\bar{Q}\cap (\Omega\times\{0\})\ne\emptyset,$ the solutions $u$ experience immediate fine-scale density mixtures in $Q$ between low density regime $[s^-(r_1),s^-(r_2)]$ and high density regime $[s^+(r_1),s^+(r_2)]$. Observe also from (d), (e), and (f) that the total population in $\Omega$ decreases exponentially in time:
\[
0\le \int_\Omega u(x,t)\,dx= \int_\Omega u^\star(x,t)\,dx\le CLe^{-\gamma t}\;\;\forall t\ge0.
\]

Instead of the initial-Dirichlet boundary value problem (\ref{ib-P}), one may consider the Cauchy problem with the  diffusivity $\sigma:\R\to\R$ as in (\ref{ib-P}) and initial population density $u_0\in C^{2+a}_c(\R;[0,1]).$ In this case, it is essential to study first the existence and properties of a global classical solution to the modified Cauchy problem as in (\ref{ib-modi}) with $\Omega$ replaced by $\R$. On the other hand, in a biological viewpoint, it is also interesting to consider in problem (\ref{ib-P}) a Dirac delta distribution as the initial population density $u_0$. This may be regarded as the case of describing population dynamics of a species, concentrated initially at a single point. Likewise, one then needs to establish first the existence and properties of a global classical solution to the modified (initial-boundary value or Cauchy) problem under the initial Dirac delta distribution $u_0$.

\underline{\textbf{Approach by differential inclusion:}} Let us take a moment here to explain our approach to prove Theorem \ref{thm:main}.
To solve the equation in (\ref{ib-P}), we formally put $v_x=u$ in $\Omega_\infty$ for some function $v:\Omega_\infty\to\R$; so we consider the equation,
\[
v_t=(\rho(v_x))_x\;\;\mbox{in $\Omega_\infty$.}
\]

To solve the previous equation in the sense of distributions in $\Omega_\infty,$
we may try to find a vector function $z=(v,w)\in W^{1,\infty}(\Omega_\infty;\R^{1+1})$ with $v_x\in L^\infty(\Omega_\infty;[0,1])$ such that
\begin{equation}\label{alt-system}
w_x=v\;\;\mbox{and}\;\;w_t=\rho(v_x)\;\;\mbox{a.e. in $\Omega_\infty$}.
\end{equation}
If there is such a function $z=(v,w),$ we take $u=v_x \in L^\infty(\Omega_\infty;[0,1])$; then from the integration by parts, for each $\varphi\in C^\infty_c(\Omega_\infty)$,
\[
\begin{split}
\int_0^\infty\int_0^L  & (u\varphi_t +\rho(u)\varphi_{xx})\,dxdt = \int_0^{T_\varphi}\int_0^L (v_x\varphi_t+\rho(v_x)\varphi_{xx})\,dxdt \\
 = & \int_0^{T_\varphi}\int_0^L (-v\varphi_{tx}+w_t\varphi_{xx})\,dxdt = \int_0^{T_\varphi}\int_0^L (-v\varphi_{tx}+w_x\varphi_{xt})\,dxdt=0,
\end{split}
\]
where $T_\varphi:=\sup_{(x,t)\in\mathrm{spt}(\varphi)}t +1$. Hence, $u$ is a global weak solution of the equation in (\ref{ib-P}) in the sense of distributions in $\Omega_\infty.$

On the other hand, for each $b\in\R,$ define
\[
\Sigma(b)=\Sigma_{\rho}(b)=\left\{
\begin{pmatrix}
s & c \\
b & \rho(s)
\end{pmatrix}\in\M^{2\times 2}\,\Big|\, 0\le s\le 1,\, c\in\R
\right\};
\]
then system (\ref{alt-system}) is equivalent to the inhomogeneous partial differential inclusion,
\[
\nabla z=\begin{pmatrix}
v_x & v_t \\
w_x & w_t
\end{pmatrix}\in \Sigma(v)\;\;\mbox{a.e. in $\Omega_\infty$},
\]
where $\nabla=(\partial_x,\partial_t)$ is the space-time gradient operator. In this regard, utilizing the method of convex integration by M\"uller \& \v Sver\'ak \cite{MSv2}, we aim at solving this inclusion for certain sets $K(b)\subset\Sigma(b)$ $(b\in\R)$ in a generic setup (section \ref{sec:generic}) while reflecting the initial and Dirichlet boundary conditions in (\ref{ib-P}).

After the successful understanding of homogeneous partial differential inclusions in the study of crystal microstructures by Ball \& James \cite{BJ} and Chipot \& Kinderlehrer \cite{CK}, the methods of convex integration in differential inclusions have been extensively applied to many important problems; see, e.g., elliptic systems \cite{MSv2}, the Euler equations and Onsager's conjecture \cite{DS, Is}, the porous media equation \cite{CFG}, active scalar equations \cite{Sy}, and the Muskat problem \cite{CCF}.

\section{Generic problem}\label{sec:generic}

In this section that is independent of the previous sections, we develop a generic inclusion problem that can be applied to the main problem (\ref{ib-P}) as a special case. Since the core analysis part is essentially the same for both the Dirichlet problem and Neumann problem, we take the generic approach instead of studying the convex integration in a special setup to avoid repetition when we deal with the Neumann problem in a subsequent paper.

\subsection{Two-wall inclusions}

As a setup, we fix some generic notations and introduce a two-wall partial differential inclusion of inhomogeneous type.

\subsubsection{Related sets}

Let $r_1<r_2$, and let $\omega_1,\omega_2\in C([r_1,r_2])$ be any two functions such that
\[
\max_{[r_1,r_2]}\omega_1 < \min_{[r_1,r_2]}\omega_2.
\]
For each $b\in\R$, define the matrix sets
\[
\begin{split}
K^+(b)=K^+_{\omega_2}(b) & =\bigg\{\begin{pmatrix}
\omega_2(r) & c \\
b & r
\end{pmatrix}\in\M^{2\times 2}\, \Big|\, r\in[r_1,r_2],\, c\in\R \bigg\},\\
K^-(b)=K^-_{\omega_1}(b) & =\bigg\{\begin{pmatrix}
\omega_1(r) & c \\
b & r
\end{pmatrix}\in\M^{2\times 2}\, \Big|\, r\in[r_1,r_2],\, c\in\R \bigg\},\\
U(b)=U_{\omega_1,\omega_2}(b) & =\bigg\{\begin{pmatrix}
s & c \\
b & r
\end{pmatrix}\in\M^{2\times 2}\, \Big|\, r\in(r_1,r_2),\, \omega_1(r)<s<\omega_2(r), \, c\in\R \bigg\},\\
\end{split}
\]
and $K(b)=K_{\omega_1,\omega_2}(b)=K^+(b)\cup K^-(b)$. Let
\[
\begin{split}
K^+=K^+_{\omega_2} & =\{(\omega_2(r),r)\,|\, r\in[r_1,r_2]\}, \\
K^-=K^-_{\omega_1} & =\{(\omega_1(r),r)\,|\, r\in[r_1,r_2]\},
\end{split}
\]
and $K=K_{\omega_1,\omega_2}=K^+\cup K^-$. Also, let
\[
U =\{(s,r)\, |\, r\in(r_1,r_2),\, \omega_1(r)<s<\omega_2(r) \}.
\]

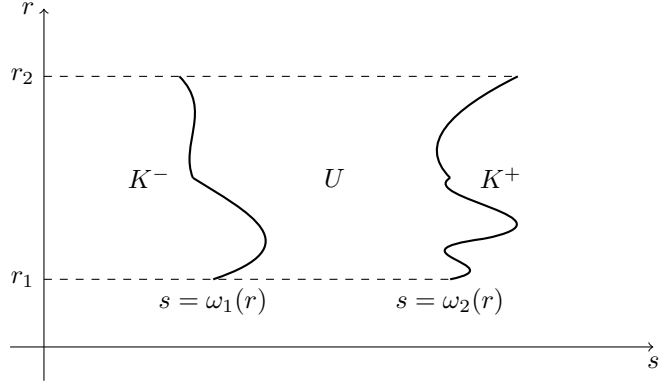
\begin{figure}[ht]
\begin{center}
\begin{tikzpicture}[scale =0.9]
    \draw[->] (-0.5,0) -- (9,0);
    \draw[->] (0,-0.5) -- (0,5);
    \draw[dashed] (0,4)  --  (7,4) ;
    \draw[dashed] (0,1)  --  (6,1) ;
    \draw[thick]   (2, 4) .. controls (2.5,3.5) and  (2,3)   ..(2.2,2.5);
    \draw[thick]   (2.2,2.5) .. controls (3,2) and  (4,1.5)   ..(2.5,1);
    \draw[thick]   (7, 4) .. controls  (6, 3.5) and (5.5,3) ..(6, 2.5 );
    \draw[thick]   (6, 2.5) .. controls  (5.5,2.2) and (8,1.9) ..(6.5,1.6);
    \draw[thick]   (6.5,1.6) .. controls  (5,1.4) and (7,1.2) ..(6,1);
    \draw (0,4) node[left] {$r_2$};
    \draw (0,1) node[left] {$r_1$};
    \draw (0,5) node[left] {$r$};
    \draw (9,0) node[below] {$s$};
    \draw (2.5,1) node[below] {$s=\omega_1(r)$};
    \draw (6,1) node[below] {$s=\omega_2(r)$};
    \draw (2,2.5) node[left] {$K^-$};
    \draw (6.3,2.5) node[right] {$K^+$};
    \draw (4,2.5) node[right] {$U$};
    \end{tikzpicture}
\end{center}
\caption{The right wall $K^+$ and left wall $K^-$.}
\label{fig2-1}
\end{figure}

\subsubsection{Two-wall inclusions}

Let
\[
\Omega_{t_1}^{t_2}=\Omega\times(t_1,t_2)=(0,L)\times(t_1,t_2)\subset\R^2,
\]
where $t_1<t_2$ are any two fixed real numbers, and let $Q\subset \Omega_{t_1}^{t_2}$ be a nonempty open set.
Consider the inhomogeneous partial differential inclusion,
\begin{equation}\label{inclusion}
\nabla z\in K(v)\;\;\mbox{in $Q$},
\end{equation}
where $z=(v,w):Q\to \R^2$.
Regarding this, we fix some terminologies.

\begin{defn}
Let $z=(v,w)\in W^{1,\infty}(Q;\R^2).$ Then the function $z$ is called a \emph{solution} of inclusion (\ref{inclusion}) if
\[
\nabla z\in K(v)\;\;\mbox{a.e. in $Q$},
\]
a \emph{subsolution} of (\ref{inclusion}) if
\[
\nabla z\in K(v)\cup U(v)\;\;\mbox{a.e. in $Q$},
\]
and a \emph{strict subsolution} of (\ref{inclusion}) if
\[
\nabla z\in U(v)\;\;\mbox{a.e. in $Q$},
\]
respectively.
\end{defn}

Observe that if $z=(v,w)\in W^{1,\infty}(Q;\R^2)$ is a solution of (\ref{inclusion}), then
\[
(v_x,w_t)\in K=K^+\cup K^- \;\;\mbox{a.e. in $Q$;}
\]
that is, $(v_x,w_t)$ lies either in the ``right wall'' $K^+$ or in the ``left wall'' $K^-$ almost everywhere in $Q$(see Figure \ref{fig2-1}).

\subsection{Special solutions to generic problem}
Continuing the previous setup, we present an important existence result on inclusion (\ref{inclusion}) that will serve as the main ingredient for proving Theorem \ref{thm:main}.

Assume that $z^\star=(v^\star,w^\star)\in C^1(\bar{\Omega}_{t_1}^{t_2};\R^2)$ is a function such that in $Q,$
\begin{equation}\label{subsolution-1}
w^\star_x = v^\star,\;\; r_1<w^\star_t<r_2, \;\;\mbox{and}\;\; \omega_1(w^\star_t)<v^\star_x<\omega_2(w^\star_t).
\end{equation}
From the definition of $U(b)$ $(b\in\R)$,
\[
\nabla z^\star= \begin{pmatrix}
v^\star_x & v^\star_t \\
w^\star_x & w^\star_t
\end{pmatrix} \in U(v^\star)\;\;\mbox{in $Q$;}
\]
that is, $z^\star$ is a strict subsolution of inclusion (\ref{inclusion}).
In particular, we have
\begin{equation}\label{subsolution-3}
(v^\star_x,w^\star_t)\in U \;\;\mbox{in $Q$.}
\end{equation}
Assume further that
\begin{equation}\label{subsolution-2}
(v^\star_x,w^\star_t)(Q) \cap \{(s,r)\in U\,|\, \dist((s,t),\partial U)<\delta\}\ne\emptyset
\end{equation}
for all sufficiently small $\delta>0.$

We are now ready to state the the main result of this section whose proof is given in section \ref{sec:proof-main-thm}.

\begin{thm}\label{thm:two-wall}
Let $\epsilon>0.$ Then there exists a function $z=(v,w)\in W^{1,\infty}(\Omega_{t_1}^{t_2};\R^2)$ satisfying the following:
\begin{itemize}
\item[(i)] $z$ is a solution of inclusion (\ref{inclusion}),
\item[(ii)] $z=z^\star$\;\;on $\bar{\Omega}_{t_1}^{t_2}\setminus Q,$
\item[(iii)]$\nabla z=\nabla z^\star$\;\;a.e. on $\Omega_{t_1}^{t_2}\cap\partial Q,$
\item[(iv)] $\|z-z^\star\|_{L^\infty(\Omega_{t_1}^{t_2};\R^2)}<\epsilon,$
\item[(v)] $\|v_t-v^\star_t\|_{L^\infty(\Omega_{t_1}^{t_2})}<\epsilon,$
\item[(vi)] for any nonempty open set $O\subset Q,$
\[
\underset{O}{\mathrm{ess\,osc}}\, v_x \ge d_0,
\]
where $d_0:=\min_{[r_1,r_2]}\omega_2-\max_{[r_1,r_2]}\omega_1>0.$
\end{itemize}
\end{thm}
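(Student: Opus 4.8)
The plan is to prove Theorem \ref{thm:two-wall} via the method of convex integration in the Baire category / $\Pi$-spaces framework of M\"uller \& \v Sver\'ak. First I would set up the functional-analytic stage: fix the strict subsolution $z^\star$ and consider the class
\[
\mathcal{S}=\Big\{z\in z^\star+W_0^{1,\infty}(Q;\R^2)\,\Big|\, \nabla z\in U(v)\ \text{a.e. in }Q,\ \text{and the constraints (iv)--(v) hold}\Big\},
\]
suitably closed so as to become a complete metric space under the $L^\infty$ (or $C^0$) topology while keeping the gradients in a fixed bounded set. The key geometric input is that the ``gap'' region $U$ lies, in the relevant $(s,r)$-plane (the $(v_x,w_t)$-components), inside the rank-one convex hull generated by the two walls $K^+$ and $K^-$: any interior matrix can be written as a rank-one combination of matrices whose $(s,r)$-parts move toward the two curves $s=\omega_2(r)$ and $s=\omega_1(r)$. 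This is what allows Laminates/oscillatory perturbations along a fixed rank-one direction (here the direction is dictated by the divergence-type constraint $w_x=v$, so admissible gradient perturbations are of the form $\nabla(\phi\,\eta)$ with $\eta$ a fixed null-Lagrangian-compatible vector).

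Next I would establish the crucial \emph{perturbation lemma}: given any strict subsolution $z$ and any $\kappa>0$, there is another strict subsolution $\tilde z$, agreeing with $z$ outside $Q$ and near $\partial Q$, with $\|\tilde z - z\|_{L^\infty}<\kappa$ and $\|\tilde v_t-v_t\|_{L^\infty}<\kappa$, but such that $\int_Q \dist(\nabla \tilde z, K(\tilde v))\,dx \le \int_Q \dist(\nabla z, K(v))\,dx - c(\kappa)$ for some fixed positive modulus, or more precisely such that on a large-measure subset $\nabla\tilde z$ is $\kappa$-close to $K$. This is built by covering (in the sense of Vitali) the region where $\nabla z$ is still far from $K$ by small cubes on which $z$ is nearly affine, and on each cube inserting a high-frequency laminate in the fixed rank-one direction that pushes $(v_x,w_t)$ toward the two walls while the error terms coming from the non-affine part of $z$ and from the variable coefficient $v$ in $K(v)$ are controlled by the frequency and the cube size. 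Here one uses that $\omega_1,\omega_2$ are merely continuous, so the walls must be handled with a uniform-continuity/partition argument rather than pointwise.

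Then the Baire-category argument closes the proof: the set of strict subsolutions is a Baire space, the maps $z\mapsto \nabla z$ restricted to it are (by the perturbation lemma) such that the subset where $\nabla z\in K(v)$ a.e. is residual — concretely, for each $j$ the set $\{z : |\{x\in Q: \dist(\nabla z,K(v))>1/j\}|<1/j\}$ is open and dense, and the countable intersection consists of exact solutions. The $L^\infty$-smallness (iv), (v) are preserved because every perturbation step respects them with summable increments; the matching of $z=z^\star$ on $\bar\Omega_{t_1}^{t_2}\setminus Q$ and of $\nabla z=\nabla z^\star$ a.e.\ on $\Omega_{t_1}^{t_2}\cap\partial Q$ follows since all perturbations are compactly supported in $Q$ and $z^\star\in C^1$; and the oscillation bound (vi) follows because at points of $Q$ the limit gradient has $v_x$-component in $K^+\cup K^-$, whose $s$-projections are separated by $d_0=\min\omega_2-\max\omega_1$, combined with a Lebesgue-density argument showing both walls are attained in every open subset $O\subset Q$ — this last density property is inherited from assumption (\ref{subsolution-2}) propagated through the construction.

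The main obstacle I expect is the \textbf{perturbation lemma under variable constraints and merely continuous walls}: because the target set $K(v)$ depends on the (unknown) value $v$, and the two wall functions $\omega_1,\omega_2$ are only continuous, one cannot use a single fixed rank-one connection; instead one must, on each small cube, freeze $v$ at its average and freeze $\omega_1,\omega_2$ near their local values, insert the laminate, and then show the incurred errors (from the variation of $v$, from the variation of $\omega_i$, and from the non-affine remainder of $z^\star$-perturbations near $\partial Q$) are all $o(1)$ as the cubes shrink and the oscillation frequency grows. Making these three error sources simultaneously controllable — while also preserving the strict-subsolution property $\nabla\tilde z\in U(\tilde v)$ after the perturbation, so that the induction can continue — is the delicate technical heart, and it is presumably why the authors devote Sections \ref{sec:proof-main-thm} and \ref{sec:proof-main-lem} to it.
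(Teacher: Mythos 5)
Your overall strategy is a genuinely different implementation of convex integration from the paper's: you propose the Baire-category/residuality version (a complete metric space of strict subsolutions plus a perturbation lemma making the ``almost in $K$'' sets open and dense), whereas the paper runs an explicit in-approximation iteration: it builds nested open sets $U^{\lambda_i}$ shrinking onto $K$, and Lemma \ref{lem:main} constructs by hand a sequence $z_i$ of piecewise-$C^1$ approximations supported on explicit Vitali covers by rhombi and triangles, with quantitative $L^1$ bounds on $\nabla z_i-\nabla z_{i-1}$ that yield a.e.\ convergence of the gradients. For items (i)--(v) your route is viable, and you correctly isolate the technical heart (freezing $v$ and the merely continuous walls $\omega_1,\omega_2$ on small cells; the paper handles this through the uniform-continuity radii $\gamma_i,\gamma_{ij}$ and the quantities $\eta_i$).

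There is, however, a genuine gap at item (vi). The bound $\mathrm{ess\,osc}_O\, v_x\ge d_0$ on \emph{every} nonempty open $O\subset Q$ is not a consequence of merely being a solution of (\ref{inclusion}): a solution could satisfy $(v_x,w_t)\in K^+$ a.e.\ on some open subset (e.g.\ $v$ locally affine with slope $\omega_2(r)$ for a fixed $r$), where the oscillation is far below $d_0$. Nor is it handed to you generically by the Baire argument without further work. Your one-line justification (``a Lebesgue-density argument \dots inherited from assumption (\ref{subsolution-2}) propagated through the construction'') does not identify the mechanism; (\ref{subsolution-2}) is used in the paper only to guarantee that each layer $Q_i$ of the in-approximation is nonempty. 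What actually delivers (vi) is item (j) of Lemma \ref{lem:main}: lower bounds $\int_D\zeta_0^{\pm}((v_i)_x,(w_i)_t)\,dxdt\ge c_3(\beta_i-\beta_{i-1})|D|$ on every cell $D$ of the preceding generation, propagated multiplicatively with factors $1-(\beta_i-\beta_{i-1})$ whose infinite product stays above $\tfrac12$; in the limit every cell --- hence every open $O\subset Q$, which contains a cell by the Vitali structure and assertion (f) --- carries positive measure within distance $d_0/2$ of \emph{each} wall $K^{\pm}$, and (vi) follows. In your framework you would need an analogous quantitative statement (for instance, control of the gradient Young measure of the generic solution showing it charges both walls at a.e.\ point of $Q$, plus a localization argument); that is a substantive missing piece rather than a routine density remark.
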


\section{Application of Theorem \ref{thm:two-wall}: proof of main result}\label{sec:setup}

In this section, we turn back to section \ref{sec:mainresult} and prove the main result of the paper, Theorem \ref{thm:main}, by applying Theorem \ref{thm:two-wall}. For the reader's convenience, \underline{we underline} \underline{the arguments in Theorem \ref{thm:main} that are proved along the way.}

To start the proof, assume
\[
M_0> s^-_1,
\]
and fix any two numbers $r_1<r_2$ in $[\rho(s^+_0),r^*]$ such that
\begin{equation}\label{choice-r_1}
s^-(r_1)<M_0.
\end{equation}

In order to fit into the setup in section \ref{sec:generic}, for $r_1\le r\le r_2$, define
\[
\omega_1(r)=s^-(r)\;\;\mbox{and}\;\;\omega_2(r)=s^+(r);
\]
then $\omega_1,\omega_2\in C([r_1,r_2])$, and
\[
\max_{[r_1,r_2]}\omega_1=s^-(r_2)<s^+(r_1)=\min_{[r_1,r_2]}\omega_2.
\]

Next, using elementary calculus, we can choose a function $\rho^\star\in C^3(\R)$ such that
\begin{equation}\label{rho-modi}
\left\{
\begin{array}{l}
  \rho^\star=\rho\;\;\mbox{on $(-\infty,s^-(r_1)]\cup[s^+(r_2),\infty)$,} \\[1mm]
  (\rho^\star)'>0\;\;\mbox{on $[s^-(r_1),s^+(r_2)],$} \\[1mm]
  \rho^\star<\rho\;\;\mbox{on $(s^-(r_1),s^-(r_2)],$}\;\;\mbox{and}\;\;\rho^\star>\rho\;\;\mbox{on $[s^+(r_1),s^+(r_2)).$}
\end{array}
\right.
\end{equation}
Define $\sigma^\star=(\rho^\star)'\in C^2(\R);$ then $\exists c_0>0$ such that $\sigma^\star\ge c_0$ in $\R$. Now, from \cite[Theorem 12.14]{Ln} and \cite[Theorem 1.1]{Ki}, the modified problem,
\begin{equation}\label{ib-modi}
\begin{cases}
u^\star_{t} =(\sigma^\star(u^\star)u^\star_x)_x=(\rho^\star(u^\star))_{xx} & \mbox{in $\Omega_\infty$,}\\
u^\star =u_0 & \mbox{on $\Omega\times \{t=0\}$},\\
u^\star=0 & \mbox{on $\partial\Omega\times(0,\infty)$,}
\end{cases}
\end{equation}
possesses a unique global solution \underline{$u^\star\in C^{2,1}(\bar\Omega_\infty;[0,1])$ with $u^\star\in C^{2+a,1+\frac{a}{2}}(\bar\Omega_T;[0,1])$} \underline{for each $T>0$} such that
\begin{equation}\label{sol-star-prop-1}
0 \le u^\star(x,t)\le\|u^\star(\cdot,s)\|_{L^\infty(\Omega)}\;\;\mbox{for all $x\in\bar\Omega$ and $t\ge s\ge0$}
\end{equation}
and that
\begin{equation}\label{sol-star-prop-2}
\|u^\star(\cdot,t)\|_{L^\infty(\Omega)}\le C e^{-\gamma t}\;\;\mbox{for all $t\ge0$},
\end{equation}
for some constants $C> 0$ and $\gamma>0$, depending only on $M_0$, $L$, $\sigma^\star$, and $(\sigma^\star)'$.
Thus, \underline{(e) and (f) in Theorem \ref{thm:main} are satisfied.} Note also from (\ref{rho-modi}) and (\ref{ib-modi}) that for each $t\ge0$,
\[
\begin{split}
\frac{d}{dt}\int_\Omega u^\star(x,t)\,dx & = \int_\Omega u_t^\star(x,t)\,dx= \sigma^\star(u^\star(L,t))u^\star_x(L,t)-\sigma^\star(u^\star(0,t))u^\star_x(0,t)\\
& = u^\star_x(L,t)-u^\star_x(0,t)\le0
\end{split}
\]
as $u^\star(x,t)\ge0$ for $0<x<L.$ So \underline{the second of (d)  in Theorem \ref{thm:main} holds.}

We define
\begin{equation}\label{def:vstar}
v^\star(x,t)=\int_0^x u^\star(y,t)\,dy+ \int_0^t u^\star_x(0,s)\,ds\;\;\forall (x,t)\in\bar\Omega_\infty;
\end{equation}
then from (\ref{ib-modi}) and the choice of $\rho^\star$, $v^\star\in C^{3,1}(\bar\Omega_\infty)$ satisfies that for all $(x,t)\in\Omega_\infty$,
\[
\begin{split}
v^\star_t(x,t) & = \int_0^x u^\star_t(y,t)\,dy+ u^\star_x(0,t) = \int_0^x (\sigma^\star(u^\star)u^\star_x)_x(y,t)\,dy+ u^\star_x(0,t)\\
& = \sigma^\star(u^\star(x,t))u^\star_x(x,t) - \sigma^\star(u^\star(0,t))u^\star_x(0,t) + u^\star_x(0,t)\\
& = \sigma^\star(v^\star_x(x,t))v^\star_{xx}(x,t) = (\rho^\star(v^\star_x))_{x}(x,t).
\end{split}
\]
Hence, $v^\star$ is a global solution to the problem,
\begin{equation}\label{ib-modi-1}
\begin{cases}
v^\star_{t} =(\rho^\star(v^\star_x))_{x} & \mbox{in $\Omega_\infty$,}\\
v^\star =v_0 & \mbox{on $\Omega\times \{t=0\}$},\\
v^\star_x=0 & \mbox{on $\partial\Omega\times(0,\infty)$,}
\end{cases}
\end{equation}
where
\[
v_0(x):=\int_0^x u_0(y)\,dy \;\;\forall x\in\bar\Omega.
\]

In turn, we define
\[
w^\star(x,t)=\int_0^t \rho^\star (v^\star_x(x,s))\,ds + \int_0^x v_0(y)\,dy\;\;\forall (x,t)\in\bar\Omega_\infty;
\]
then from (\ref{def:vstar}) and (\ref{ib-modi-1}),
\begin{equation}\label{system-1}
\begin{cases}
w^\star_{x} =v^\star \\
w^\star_t =\rho^\star(v^\star_x)
\end{cases}
\;\;\mbox{in $\Omega_\infty$},
\end{equation}
$v^\star_x=u^\star\in C^{2,1}(\bar\Omega_\infty;[0,1])$, and $z^\star:=(v^\star,w^\star)\in (C^{3,1}\times C^{4,2})(\bar\Omega_\infty).$

Define
\[
Q=\{(x,t)\in\Omega_\infty\,|\, s^-(r_1)<v^\star_x(x,t)<s^+(r_2)\}.
\]
Fix a number $T_0>0$ so large that $Ce^{-\gamma T_0}< s^-(r_1)$. Then from (\ref{sol-star-prop-1}) and (\ref{sol-star-prop-2}),
\[
v^\star_x(x,t)=u^\star(x,t)< s^-(r_1)\;\;\forall (x,t)\in\Omega\times[T_0,\infty);
\]
thus,
\[
Q\subset \Omega^{T_0}_0=\Omega\times(0,T_0).
\]
Since $u_0(0)=u_0(L)=0$ and $M_0>s^-(r_1)$, we can take a point $x_0\in\Omega$ such that
\[
s^-(r_1)<v^\star_x(x_0,0)=u_0(x_0)<s^+(r_2).
\]
Then by continuity, we can take an $r_0\in(0,T_0)$ with $r_0<\min\{x_0,L-x_0\}$ so small that
\[
s^-(r_1)<v^\star_x(x,t)<s^+(r_2)\;\;\forall(x,t)\in (\Omega\times[0,\infty)) \cap B_{r_0}(x_0,0);
\]
thus, $\Omega_\infty \cap B_{r_0}(x_0,0)\subset Q\ne\emptyset$ so that
\[
(x_0-r_0,x_0+r_0)\times\{0\}\subset\underline{\bar{Q}\cap(\Omega\times\{0\})\ne\emptyset.}
\]
Note here that \underline{$Q$ is a nonempty bounded open subset of $\Omega_\infty.$} Observe also that
\[
\underline{\bar{Q}\subset\Omega\times[0,\infty)}
\]
as $\bar{Q}$ is compact and $v^\star_x(0,t)=v^\star_x(L,t)=0$ for all $t\ge0$.

Following the notations in section \ref{sec:generic}, note from the choice of $\rho^\star$, the definition of $Q$, (\ref{choice-r_1}), (\ref{ib-modi-1}), and (\ref{system-1}) that (\ref{subsolution-1}) holds in $Q$ and (\ref{subsolution-2}) holds for all sufficiently small $\delta>0$.
Thus, for any fixed $\epsilon>0,$ we can apply Theorem \ref{thm:two-wall} (with $t_1=0$ and $t_2=T_0$ in the current case) to obtain a function $z=z_\epsilon\in W^{1,\infty}(\Omega^{T_0}_0;\R^2)$ satisfying the following, where $z=(v,w)$:
\begin{itemize}
\item[(i)] $z$ is a solution of inclusion (\ref{inclusion});
\item[(ii)] $z=z^\star$\;\;on $\bar{\Omega}^{T_0}_0\setminus Q;$
\item[(iii)]$\nabla z=\nabla z^\star$\;\;a.e. on $\Omega^{T_0}_0\cap\partial Q;$
\item[(iv)] $\|z-z^\star\|_{L^\infty(\Omega^{T_0}_0;\R^2)}<\epsilon;$
\item[(v)] $\|v_t-v^\star_t\|_{L^\infty(\Omega^{T_0}_0)}<\epsilon;$
\item[(vi)] for any nonempty open set $O\subset Q,$
\[
\underset{O}{\mathrm{ess\,osc}}\, v_x \ge d_0,
\]
where $d_0=\min_{[r_1,r_2]}\omega_2-\max_{[r_1,r_2]}\omega_1=s^+(r_1)-s^-(r_2)>0.$
\end{itemize}
For each $(x,t)\in\bar{\Omega}\times [T_0,\infty)$, define
\begin{equation}\label{def:z-1}
z(x,t)=z^\star(x,t);
\end{equation}
then from (ii), $z=(v,w)\in W^{1,\infty}(\Omega_T;\R^2)$ for all $T>0$.

For each $t\ge 0,$ define
\begin{equation}\label{def:u-sol}
u(\cdot,t)=v_x(\cdot,t)\;\;\mbox{a.e. in $\Omega$};
\end{equation}
then $u\in L^\infty(\Omega_T)$ for all $T>0$.
First, from (vi), \underline{(c) in Theorem \ref{thm:main} is fulfilled.}
Also, note from (ii) and (\ref{def:vstar}) that for every $t\ge 0,$
\[
\begin{split}
\int_\Omega u(x,t)\,dx & =\int_\Omega v_x(x,t)\,dx=v(L,t)-v(0,t) \\
&= v^\star(L,t)-v^\star(0,t) =\int_\Omega v^\star_x(x,t)\,dx= \int_\Omega u^\star(x,t)\,dx;
\end{split}
\]
hence, \underline{the first of (d) in Theorem \ref{thm:main} holds.}
From (ii), (\ref{def:vstar}), and (\ref{def:z-1}), we have
\[
u=v_x=v^\star_x=u^\star\in[0,1]\;\;\mbox{in $\Omega_\infty\setminus\bar{Q}$},
\]
and from (iii) and (\ref{def:vstar}), we get
\[
u=v_x=v^\star_x=u^\star\in[0,1]\;\;\mbox{a.e. in $\Omega_\infty\cap\partial Q$}.
\]
In particular, \underline{(a) in Theorem \ref{thm:main} is satisfied.}
From (i), we have
\[
\nabla z\in K(v)\;\;\mbox{a.e. in $Q$;}
\]
that is, a.e. in $Q$,
\begin{equation}\label{property-v-w}
\left\{
\begin{array}{l}
  u=v_x\in[s^-(r_1),s^-(r_2)]\cup[s^+(r_1),s^+(r_2)],  \\
  w_t=\rho(v_x), \\
  w_x=v.
\end{array}
\right.
\end{equation}
In particular, we see that $u\in[0,1]$ a.e. in $\Omega_\infty$, that is, \underline{$u\in L^\infty(\Omega_\infty;[0,1])$} and that \underline{(b) in  Theorem \ref{thm:main} holds.}

Now, we check that $u$ is a global weak solution to problem (\ref{ib-P}). To do so, fix any $T>0$ and any test function $\varphi\in C^\infty(\bar\Omega\times[0,T])$ with
\begin{equation}\label{bdry:testfn}
\varphi=0\;\;\mbox{on $(\partial\Omega\times[0,T])\cup(\Omega\times\{t=T\}).$}
\end{equation}
Observe from (ii), (\ref{ib-modi-1}), (\ref{def:z-1}), (\ref{def:u-sol}), (\ref{property-v-w}), (\ref{bdry:testfn}), the choice of $\rho^\star$, and the integration by parts that
\[
\begin{split}
\int_0^T & \int_0^L(u\varphi_t+\rho(u)\varphi_{xx})\,dxdt = \int_0^T\int_0^L (u\varphi_t+w_t\varphi_{xx})\,dxdt \\
= &  \int_0^T w^\star_t(L,t)\varphi_{x}(L,t)\,dt -\int_0^T w^\star_t(0,t)\varphi_{x}(0,t)\,dt + \int_0^L w^\star_x(x,0)\varphi_{x}(x,0)\,dx\\
= &  \int_0^T \rho^\star(v^\star_x(L,t))\varphi_{x}(L,t)\,dt -\int_0^T \rho^\star(v^\star_x(0,t))\varphi_{x}(0,t)\,dt + \int_0^L v_0(x)\varphi_{x}(x,0)\,dx \\
= & -\int_0^L v_0'(x)\varphi(x,0)\,dx +v_0(L)\varphi(L,0)-v_0(0)\varphi(0,0)=-\int_0^L u_0(x)\varphi(x,0)\,dx.
\end{split}
\]
Thus, according to Definition \ref{def:global-weak-sol}(i), \underline{$u$ is a global weak solution to (\ref{ib-P}).}

Since $u^\star$ itself is not a global weak solution to problem (\ref{ib-P}), it follows from (iv) that \underline{there are infinitely many global weak solutions to (\ref{ib-P}) that satisfy properties} \underline{(a)--(f) in Theorem \ref{thm:main}.}

The proof of Theorem \ref{thm:main} is now complete.

\section{Proof of Theorem \ref{thm:two-wall}}\label{sec:proof-main-thm}

Following section \ref{sec:generic}, this section presents a proof of Theorem \ref{thm:two-wall} with the help of the key lemma, Lemma \ref{lem:main}, to be proved in section \ref{sec:proof-main-lem}.

\subsection{Selection of an in-approximation to $K$}\label{subsection:selection}

For each $b\in\R$ and $0\le\lambda<\frac{1}{2}$, define the matrix set $U^\lambda(b)=U^\lambda_{\omega_1,\omega_2}(b)\subset\M^{2\times 2}$ by
\[
U^\lambda(b)=\bigg\{\begin{pmatrix}
s & c \\
b & r
\end{pmatrix}\, \Big| \begin{array}{l}
                                         c\in\R,\,(1-\lambda)r_1+\lambda r_2< r <\lambda r_1+(1-\lambda) r_2, \\[0.5mm]
                                         (1-\lambda)\omega_1(r)+\lambda \omega_2(r)<s<\lambda\omega_1(r)+(1-\lambda)\omega_2(r)
                                       \end{array}\bigg\},
\]
and let $U^\lambda=U^\lambda_{\omega_1,\omega_2}\subset\R^2$ be given by
\[
U^\lambda=\bigg\{(s,r)\, \Big| \begin{array}{l}
                                         (1-\lambda)r_1+\lambda r_2< r <\lambda r_1+(1-\lambda) r_2, \\[0.5mm]
                                         (1-\lambda)\omega_1(r)+\lambda \omega_2(r)<s<\lambda\omega_1(r)+(1-\lambda)\omega_2(r)
                                       \end{array}\bigg\}.
\]
Observe that $U^0(b)=U(b)$ for every $b\in\R$ and that $U^0=U.$

Since $|Q|<\infty$, we can select a sequence $\{\lambda_i\}_{i\in\N}$ in $\R$ with
\begin{equation}\label{choice-lamda-i}
\frac{1}{(2i+1)^2}<\lambda_i<\frac{1}{(2i)^2}<\frac{1}{2}\;\;\forall i\in\N
\end{equation}
such that for every $i\in\N,$
\[
\big|\{(x,t)\in Q\,|\, (v^\star_x(x,t),w^\star_t(x,t))\in \partial U^{\lambda_i}\}\big|=0.
\]
Thanks to (\ref{subsolution-2}), we may assume
\[
Q_i:=\{(x,t)\in Q\,|\, (v^\star_x(x,t),w^\star_t(x,t))\in U^{\lambda_i}\setminus\bar{U}^{\lambda_{i-1}}\}\ne\emptyset\;\;\forall i\in\N,
\]
where $U^{\lambda_0}:=\emptyset;$ then from (\ref{subsolution-3}), $\{Q_i\}_{i\in\N}$ is a sequence of disjoint  open subsets of $Q$ whose union has measure $|Q|.$

For each $i\in\N,$ let $\lambda_i'=\frac{\lambda_i+\lambda_{i+1}}{2}$. Observe from (\ref{choice-lamda-i}) that for all $i\in\N,$
\[
\begin{split}
\frac{\lambda_i-\lambda_{i+1}'}{\lambda_{i+1}-\lambda_{i+1}'} & = \frac{\lambda_i-\frac{\lambda_{i+1}+\lambda_{i+2}}{2}}{\lambda_{i+1}-\frac{\lambda_{i+1}+\lambda_{i+2}}{2}} =\frac{2\lambda_i-\lambda_{i+1}-\lambda_{i+2}}{\lambda_{i+1}-\lambda_{i+2}} \\
& \le \frac{\frac{2}{(2i)^2}-\frac{1}{(2(i+1)+1)^2}-\frac{1}{(2(i+2)+1)^2}}{\frac{1}{(2(i+1)+1)^2}-\frac{1}{(2(i+2))^2}}\\
& =\frac{2(64i^3+308i^2+480i+225)(i+2)^2}{i^2(2i+5)^2(4i+7)}\le 36
\end{split}
\]
and that
\[
\begin{split}
\frac{\lambda_{i}-\lambda_{i+1}'}{1-2\lambda_{i+1}'} & =\frac{\lambda_{i}-\frac{\lambda_{i+1}+\lambda_{i+2}}{2}}{1-\lambda_{i+1}-\lambda_{i+2}} =\frac{1}{2}\cdot\frac{2\lambda_i-\lambda_{i+1}-\lambda_{i+2}}{1-\lambda_{i+1}-\lambda_{i+2}}\\
& \le \frac{1}{2}\cdot\frac{\frac{2}{(2i)^2}-\frac{1}{(2(i+1)+1)^2}-\frac{1}{(2(i+2)+1)^2}}{1-\frac{1}{(2(i+1))^2}-\frac{1}{(2(i+2))^2}} \\
& =\frac{(i+1)^2(i+2)^2(64i^3+308i^2+480i+225)}{i^2(2i+3)^2(2i+5)^2(4i^4+24i^3+50i^2+42i+11)}.
\end{split}
\]
From these observations, we deduce that for each $i\in\N,$
\begin{equation}\label{choice-alpha-i}
0<\alpha_i:=\frac{\lambda_{i}-\lambda_{i+1}'}{1-2\lambda_{i+1}'} \le 36 \frac{\lambda_{i+1}-\lambda_{i+1}'}{1-2\lambda_{i+1}'}
\end{equation}
and that
\[
0<\ell_0:=\sum_{i=1}^\infty \alpha_i<1.
\]

Fix a number $\kappa_0\in(1,1/\ell_0)$, and define
\[
\beta_1=1-\frac{1}{\kappa_0};
\]
then
\[
\ell_0<\kappa_0\ell_0<1\;\;\mbox{and}\;\;0< \beta_1=1-1/\kappa_0<1-\ell_0<1.
\]
In turn, for each $i\in\N$, define
\begin{equation}\label{choice-beta_i}
\beta_{i+1}=\beta_i+\frac{\alpha_i}{\kappa_0\ell_0};
\end{equation}
then the sequence $\{\beta_i\}_{i\in\N}$ fulfills that
\[
0< \beta_1<\beta_2<\cdots<1,\;\;\lim_{i\to\infty}\beta_i=1,
\]
and
\[
\frac{\lambda_{i}-\lambda_{i+1}'}{1-2\lambda_{i+1}'}=\kappa_0\ell_0(\beta_{i+1}-\beta_i) \le 36 \frac{\lambda_{i+1}-\lambda_{i+1}'}{1-2\lambda_{i+1}'}\;\;\forall i\in\N.
\]

For each $i\in\N,$ let
\[
\begin{split}
U^+_i & =\bigg\{(s,r)\, \Big| \begin{array}{l}
                                         (1-\lambda_{i+1})r_1+\lambda_{i+1} r_2< r <\lambda_{i+1} r_1+(1-\lambda_{i+1}) r_2, \\[0.5mm]
                                         \lambda_{i}\omega_1(r)+(1-\lambda_{i})\omega_2(r)<s<\lambda_{i+1}\omega_1(r)+(1-\lambda_{i+1})\omega_2(r)
                                       \end{array}\bigg\}, \\
U^-_i & =\bigg\{(s,r)\, \Big| \begin{array}{l}
                                         (1-\lambda_{i+1})r_1+\lambda_{i+1} r_2< r <\lambda_{i+1} r_1+(1-\lambda_{i+1}) r_2, \\[0.5mm]
                                         (1-\lambda_{i+1})\omega_1(r)+\lambda_{i+1}\omega_2(r)<s<(1-\lambda_{i})\omega_1(r)+\lambda_{i}\omega_2(r)
                                       \end{array}\bigg\}.
\end{split}
\]
Next, for each $i\in\N,$ let
\[
I_{i+1}=[(1-\lambda_{i+1})r_1+\lambda_{i+1}r_2,\lambda_{i+1}r_1+(1-\lambda_{i+1})r_2]\subset\R,
\]
\[
\begin{split}
\eta^{1,+}_i & =\min_{r,\bar{r}\in I_{i+1}}|(\lambda_i'\omega_1(r)+(1-\lambda_i')\omega_2(r),r)-(\lambda_i\omega_1(\bar{r})+(1-\lambda_i)\omega_2(\bar{r}),\bar{r})|,\\
\eta^{2,+}_i & =\min_{r,\bar{r}\in I_{i+1}}|(\lambda_i'\omega_1(r)+(1-\lambda_i')\omega_2(r),r)-(\lambda_{i+1}\omega_1(\bar{r})+(1-\lambda_{i+1})\omega_2(\bar{r}),\bar{r})|,\\
\eta^{1,-}_i & =\min_{r,\bar{r}\in I_{i+1}}|((1-\lambda_i')\omega_1(r)+\lambda_i'\omega_2(r),r)-((1-\lambda_i)\omega_1(\bar{r})+\lambda_i\omega_2(\bar{r}),\bar{r})|,\\
\eta^{2,-}_i & =\min_{r,\bar{r}\in I_{i+1}}|((1-\lambda_i')\omega_1(r)+\lambda_i'\omega_2(r),r)-((1-\lambda_{i+1})\omega_1(\bar{r})+\lambda_{i+1}\omega_2(\bar{r}),\bar{r})|,\\
\end{split}
\]
and
\[
\eta_i=\min\{\eta^{1,+}_i,\eta^{2,+}_i,\eta^{1,-}_i,\eta^{2,-}_i,(\lambda_i-\lambda_{i+1})(r_2-r_1) \}>0.
\]

\subsection{Main lemma}

Define
\[
\zeta^\pm_0(s,r)=\theta_0(\mathrm{dist}((s,r),K^\pm))\;\;\;((s,r)\in\R^2),
\]
where $\theta_0\in C^\infty([0,\infty))$ is a cutoff function such that $0\le\theta_0\le 1$ on $[0,\infty),$ $\theta_0=1$ on $[0,d_0/4],$ and $\theta_0=0$ on $[d_0/2,\infty).$ Then we can choose an integer $i_0\ge 2$ so large that for all $i\ge i_0,$
\[
\zeta_0^\pm (s,r)=1\;\;\;\forall(s,r)\in U^\pm_{i-1},
\]
respectively.

We present here the key lemma to finish the proof of Theorem \ref{thm:two-wall}. A proof of this lemma is provided in section \ref{sec:proof-main-lem}.

\begin{lem}\label{lem:main}
Let $\epsilon>0.$ Then there exist a sequence $\{z_i\}_{i\in\N}=\{(v_i,w_i)\}_{i\in\N}$ in $W^{1,\infty}(\Omega_{t_1}^{t_2};\R^2)$, three sequences $\{\mathcal{F}_i\}_{i\in\N}$ and $\{\mathcal{F}^\pm_i\}_{i\in\N}$ as follows, and three positive constants $c_1$, $c_2$, and $c_3$ such that for each $i\in\N,$ the following are satisfied:
\begin{itemize}
\item[(a)] $\mathcal{F}_i=\{D_{ij}\,|\, j\in\N\}$ is a countable collection of disjoint rhombic domains in $Q_1\cup\cdots\cup Q_i;$
\item[(b)] $\mathcal{F}^+_i=\{T_{ij}\,|\, j\in\N\}$ is a countable collection of disjoint triangular domains in $Q_1\cup\cdots\cup Q_i;$
\item[(c)] $\mathcal{F}^-_i=\{R_{ij}\,|\, j\in\N\}$ is a countable collection of disjoint rhombic domains in $Q_1\cup\cdots\cup Q_i;$
\item[(d)]
$
(\cup_{j\in\N}T_{ij})\cap(\cup_{j\in\N}R_{ij})=\emptyset
$
and
\[
|Q_1\cup\cdots\cup Q_i|=|\cup_{j\in\N}D_{ij}|=|\cup_{j\in\N}(T_{ij}\cup R_{ij})|;
\]
\item[(e)] for each $D\in\mathcal{F}_i$, there are four disjoint  triangular domains $T^1_D,T^2_D,T^3_D,T^4_D\in\mathcal{F}^+_i$ and one rhombic domain $R_D\in\mathcal{F}^-_i$ such that
\[
T^1_D\cup T^2_D\cup T^3_D\cup T^4_D\cup R_D\subset D\;\;\mbox{and}\;\; |D|=|T^1_D\cup T^2_D\cup T^3_D\cup T^4_D\cup R_D|;
\]
\item[(f)] one has
\[
\sup_{j\in\N}\mathrm{diam}\,D_{ij}\le\frac{1}{2^{i}};
\]
\item[(g)] one has
\[
\left\{
\begin{array}{l}
  \mbox{for each $T\in \mathcal{F}^+_i,$ $z_i\in C^1(\bar{T};\R^2),$ and $((v_i)_x,(w_i)_t)\in U^+_i$ in $T,$} \\
  \mbox{for each $R\in \mathcal{F}^-_i,$ $z_i\in C^1(\bar{R};\R^2),$ and $((v_i)_x,(w_i)_t)\in U^-_i$ in $R,$} \\
  \mbox{$z_i=z_0$ on $\bar{\Omega}_{t_1}^{t_2}\setminus (Q_1\cup \cdots\cup Q_i),$}  \\
  \mbox{$\nabla z_i=\nabla z_0$ a.e. on $\Omega_{t_1}^{t_2}\cap\partial Q,$}
\end{array}
\right.
\]
where $z_0=(v_0,w_0):=z^\star;$
\item[(h)] one has
\[
\left\{
\begin{array}{l}
  \mbox{$(w_i)_x=v_i$ a.e. in $Q;$} \\
  \|(v_i)_t-(v_{i-1})_t\|_{L^\infty(\Omega_{t_1}^{t_2})}\le\frac{\epsilon}{2^{i+1}}; \\
  \|z_i-z_{i-1}\|_{L^\infty(\Omega_{t_1}^{t_2};\R^2)}\le\frac{\epsilon}{2^{i+1}}; \\
  |z_i(x,t)-z_i(y,s)|\le c_1|(x,t)-(y,s)|\;\;\forall(x,t),(y,s)\in \Omega_{t_1}^{t_2};
\end{array}
\right.
\]
\item[(i)] if $i\ge 2,$ then
\[
\int_{\Omega_{t_1}^{t_2}}|\nabla z_i-\nabla z_{i-1}|\,dxdt\le c_2((\beta_i-\beta_{i-1})|Q|+|Q_i|);
\]
\item[(j)] if $i\ge i_0$ and $D\in\mathcal{F}_{i-1}$, then
\[
\begin{split}
\int_D \zeta^\pm_0((v_i)_x,(w_i)_t)\,dxdt & \ge c_3(\beta_i-\beta_{i-1})|D|\;\;\mbox{and}\\
\int_D \zeta^\pm_0((v_i)_x,(w_i)_t)\,dxdt & \ge (1-(\beta_i-\beta_{i-1}))\int_D \zeta^\pm_0((v_{i-1})_x,(w_{i-1})_t)\,dxdt.
\end{split}
\]
\end{itemize}
\end{lem}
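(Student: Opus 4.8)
The plan is to prove Lemma~\ref{lem:main} by induction on $i$, performing at each stage one round of convex integration in the spirit of M\"uller--\v Sver\'ak \cite{MSv2}. One sets $z_0:=z^\star$ and $\mathcal F_0=\mathcal F^\pm_0:=\emptyset$, and the inductive step rests on a local \emph{rhombus lemma}: given a $C^1$ map $\zeta=(p,q)$ on a small rhombic or triangular domain $D'\subset\subset Q_j$ with $q_x=p$ and $(\zeta_x,q_t)(\bar{D'})\subset\subset V$, where $V$ is one of the open slabs $U^{\lambda_i}$, $U^+_{i-1}$ or $U^-_{i-1}$, one produces $\tilde z=(\tilde p,\tilde q)\in W^{1,\infty}(D';\R^2)$ together with a Vitali family of disjoint rhombic subcells $D'_k\subset D'$, each subdivided up to a null set into four triangles on which $(\tilde z_x,\tilde q_t)\in U^+_i$ and one central rhombus on which $(\tilde z_x,\tilde q_t)\in U^-_i$, so that $\tilde q_x=\tilde p$ in $D'$, $\tilde z=\zeta$ and $\nabla\tilde z=\nabla\zeta$ near $\partial D'$, $\|\tilde z-\zeta\|_{L^\infty(D')}$ and $\|\tilde p_t-p_t\|_{L^\infty(D')}$ are as small as prescribed, the Lipschitz constant of $\tilde z$ is bounded by a constant depending only on a fixed compact neighbourhood of $K$, the gradient is altered on a controlled fraction of $D'$, and the triangles and central rhombus occupy the prescribed volume fractions. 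I would establish this rhombus lemma first.

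The key point in the rhombus lemma is the differential constraint $q_x=p$: it forces the admissible rank-one perturbation directions to keep the $(2,1)$-entry of the gradient equal to the zeroth-order value of the first component, so one cannot use an arbitrary laminate. Instead one laminates first in a spatial-strip direction and then refines, which is exactly why the elementary cell is a rhombus that decomposes into four triangles and a rhombus rather than two parallel strips, and why an explicit piecewise-polynomial gadget is used. Freezing $v$ at the centre of $D'$ and noting that $v$ varies along the perturbation by at most $O(\mathrm{diam}\,D')+O(\|\tilde z-\zeta\|_{L^\infty})\ll\eta_i$, one may replace the moving target $K(v)$ by the frozen set while remaining inside the slabs; the numbers $\lambda_i,\lambda_i',\alpha_i,\eta_i,\beta_i$ fixed in \S\ref{subsection:selection} are precisely the bookkeeping constants that make each matrix produced land in $U^\pm_i$ and make the volume fractions sum correctly, the latter being tied to $\beta_i-\beta_{i-1}$ through (\ref{choice-beta_i}).

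Granting the rhombus lemma, the inductive step runs as follows. On $\bar\Omega_{t_1}^{t_2}\setminus(Q_1\cup\cdots\cup Q_i)$ put $z_i:=z_{i-1}$; cover $Q_i$, where $z_{i-1}=z^\star\in C^1$ and $(v^\star_x,w^\star_t)\in U^{\lambda_i}$, by small rhombic cells and apply the rhombus lemma with $V=U^{\lambda_i}$; and on each $D\in\mathcal F_{i-1}$ apply it on each of the four triangles $T^1_D,\dots,T^4_D\in\mathcal F^+_{i-1}$ with $V=U^+_{i-1}$ and on the central rhombus $R_D\in\mathcal F^-_{i-1}$ with $V=U^-_{i-1}$, using that $z_{i-1}\in C^1$ on these pieces by (g). Let $\mathcal F_i,\mathcal F^+_i,\mathcal F^-_i$ collect all the rhombic, triangular and central-rhombic subcells so produced, with subcell diameters taken $\le 2^{-i}$. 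Properties (a)--(f), the first, third and fourth lines of (g), and the first and fourth lines of (h) are then immediate from the output (the fourth line of (g) also using that two Lipschitz maps have equal gradients a.e.\ on any set where they coincide); the uniform Lipschitz bound $c_1$ holds because, using (h) inductively, $v_i$, $(v_i)_t=v^\star_t+O(\epsilon)$ and $w_i$ stay in a fixed bounded set while $(v_i)_x\in[0,1]$ and $(w_i)_t\in[r_1,r_2]$, so $\nabla z_i$ ranges in a fixed compact set, and the increments $\le\epsilon 2^{-i-1}$ in (h) are arranged by the fine-mesh freedom of the rhombus lemma.

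It remains to check (i) and (j). For (i), $\nabla z_i$ differs from $\nabla z_{i-1}$ only on $Q_i$, contributing $O(|Q_i|)$, and on the cells of $\mathcal F_{i-1}$, where on each $D$ the gradient is altered only on the relaminated subfraction, which by (\ref{choice-beta_i}) is $\sim(\beta_i-\beta_{i-1})|D|$; summing gives $O((\beta_i-\beta_{i-1})|Q|+|Q_i|)$. For (j), needed only once $i\ge i_0$, the slabs $U^\pm_{i-1}$ already lie within distance $d_0/4$ of $K^\pm$, so $\zeta^\pm_0\equiv 1$ on them and the integrals in (j) are just Lebesgue measures of good sets: the first inequality records that inside each $D\in\mathcal F_{i-1}$ the newly created triangles (resp.\ central rhombus), on which $((v_i)_x,(w_i)_t)\in U^\pm_i\subset\{\dist(\,\cdot\,,K^\pm)<d_0/4\}$, occupy a fraction $\ge c_3(\beta_i-\beta_{i-1})$ of $D$, and the second records that relaminating destroys at most the fraction $\beta_i-\beta_{i-1}$ of the previously good part of $D$; both are immediate from the prescribed volume fractions. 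Since $\sum_i(\beta_i-\beta_{i-1})=1-\beta_1$ and $\sum_i|Q_i|=|Q|$ are finite, these are exactly the estimates that \S\ref{sec:proof-main-thm} needs in order to pass to the limit. The main obstacle is the rhombus lemma itself: constructing the constrained gadget compatible with $q_x=p$ and then tuning the constants of \S\ref{subsection:selection} so that (i) and (j) hold with $c_1,c_2,c_3$ independent of $i$.
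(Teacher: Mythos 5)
Your plan is correct and follows essentially the same route as the paper: the "rhombus lemma" you posit is exactly the paper's explicit piecewise-affine gadget (a rhombic cell split into four triangles carrying gradient $(\tau^+,\pm\delta\tau^+)$ and a central rhombus carrying $(-\tau^-,0)$, with zero average on horizontal slices so that the $x$-antiderivative correction to $w$ also vanishes outside the cell), and the inductive surgery — Vitali-covering $Q_i$ fresh while re-laminating the triangles and rhombi of $\mathcal F^\pm_{i-1}$, with (i) and (j) read off from the volume fractions tied to $\lambda_i,\lambda_i',\beta_i$ — is the paper's argument verbatim. The only difference is organizational (you package the construction as a local lemma applied cell by cell, while the paper sums the perturbations globally and verifies convergence of the partial sums), which changes nothing of substance.
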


\subsection{Completion of proof}\label{subsection:completion}
Utilizing Lemma \ref{lem:main}, we now complete the proof of Theorem \ref{thm:two-wall}.

From the third of (h) in Lemma \ref{lem:main}, it follows that for all $i> j\ge1$,
\[
\|z_i-z_j\|_{L^\infty(\Omega_{t_1}^{t_2};\R^2)}\le \epsilon\Big(\frac{1}{2^{i+1}}+\cdots+\frac{1}{2^{j+2}}\Big)\le\frac{\epsilon}{2^{j+1}};
\]
thus, $\{z_i\}_{i\in\N}$ is a Cauchy sequence in $L^\infty(\Omega_{t_1}^{t_2};\R^2)$ so that from the fourth of (h) in Lemma \ref{lem:main},
\begin{equation}\label{completion-1}
z_i\to z\;\;\mbox{in $L^\infty(\Omega_{t_1}^{t_2};\R^2)$ as $i\to\infty,$}
\end{equation}
for some $z=(v,w)\in W^{1,\infty}(\Omega_{t_1}^{t_2};\R^2)$ satisfying
\[
|z(x,t)-z(y,s)|\le c_1|(x,t)-(y,s)|\;\;\forall(x,t),(y,s)\in \Omega_{t_1}^{t_2}.
\]

Convergence (\ref{completion-1}) together with the third of (g) and third of (h) in Lemma \ref{lem:main}  implies that
\[
z=z_0=z^\star\;\;\mbox{on $\bar{\Omega}_{t_1}^{t_2}\setminus Q$}
\]
and
\[
\|z-z^\star\|_{L^\infty(\Omega_{t_1}^{t_2};\R^2)}= \|z-z_0\|_{L^\infty(\Omega_{t_1}^{t_2};\R^2)}\le\frac{\epsilon}{2}<\epsilon;
\]
that is, (ii) and (iv) in Theorem \ref{thm:two-wall} hold.

From (i) in Lemma \ref{lem:main}, we have
\[
\|\nabla z_i-\nabla z_j\|_{L^1(\Omega_{t_1}^{t_2};\M^{2\times 2})}\le c_2\bigg(|\beta_i-\beta_j||Q|+\sum_{k=\min\{i,j\}+1}^\infty|Q_k|\bigg)\to 0
\]
as $i,j\to\infty;$ thus, after passing to a subsequence if necessary, for a.e. $(x,t)\in \Omega_{t_1}^{t_2},$
\begin{equation}\label{completion-2}
\nabla z_i(x,t) \to \nabla z(x,t)\;\;\mbox{in $\M^{2\times 2}$ as $i\to\infty.$}
\end{equation}
In particular, this pointwise convergence holds for a.e. $(x,t)\in \Omega_{t_1}^{t_2}\cap\partial Q.$ Thus, (iii) in Theorem \ref{thm:two-wall} follows from the fourth of (g) in Lemma \ref{lem:main}.

From (\ref{completion-1}), (\ref{completion-2}), and the first and second of (h) in Lemma \ref{lem:main}, we have
\begin{equation}\label{completion-3}
w_x=v\;\;\mbox{a.e. in $Q$}
\end{equation}
and
\[
\|v_t-v^\star_t\|_{L^\infty(\Omega_{t_1}^{t_2})}=\|v_t-(v_0)_t\|_{L^\infty(\Omega_{t_1}^{t_2})}\le\frac{\epsilon}{2}<\epsilon;
\]
that is, (v) in Theorem \ref{thm:two-wall} is fulfilled.

For each $i\in\N,$ observe from (b), (c), (d), and the first and second of  (g) in Lemma \ref{lem:main} and from the definition of $U^\pm_i$ and $K=K^+\cup K^-$ that
\[
\begin{split}
\int_{Q}\mathrm{dist}(((v_i)_x & ,(w_i)_t),K)\,dxdt \\
= & \Big(\sum_{\ell=1}^i + \sum_{\ell=i+1}^\infty\Big)\int_{Q_\ell}\mathrm{dist}(((v_i)_x,(w_i)_t),K)\,dxdt \\
\le & S_M|Q|\lambda_i + d_1 \sum_{\ell=i+1}^\infty|Q_\ell|,
\end{split}
\]
where $d_1:=\mathrm{diam} \,U$ and
\[
S_M:=\max_{[r_1,r_2]}(\omega_2-\omega_1) >0.
\]
Here, letting $i\to\infty,$ we obtain from (\ref{completion-2}) that
\[
\int_{Q}\mathrm{dist}((v_x  ,w_t),K)\,dxdt=0;
\]
that is,
\begin{equation}\label{completion-4}
(v_x  ,w_t)\in K\;\;\mbox{a.e. in $Q$}
\end{equation}
as $K\subset \R^2$ is compact. This inclusion together with (\ref{completion-3}) and the definition of $K(b)$ $(b\in\R)$ implies that
\[
\nabla z\in K(v)\;\;\mbox{a.e. in $Q$;}
\]
hence, (i) in Theorem \ref{thm:two-wall} holds.

Finally, to verify (vi) in Theorem \ref{thm:two-wall}, choose an integer $i_1\ge i_0$ so large that for all $i\ge i_1,$
\[
\lim_{n\to\infty}(1-(\beta_n-\beta_{n-1}))(1-(\beta_{n-1}-\beta_{n-2})) \cdots (1-(\beta_{i+1}-\beta_{i}))\ge\frac{1}{2}.
\]
Fix any integer $i\ge i_1$, and let $D\in\mathcal{F}_{i-1}.$
Then note from (\ref{completion-2}) and (j) in Lemma \ref{lem:main} that
\[
\begin{split}
\int_{D} & \zeta_0^\pm(v_x,w_t)\,dxdt =\lim_{n\to\infty} \int_{D}\zeta_0^\pm((v_n)_x,(w_n)_t)\,dxdt \\
& \ge \underset{n\to\infty}{\mathrm{lim\,sup}}\,(1-(\beta_n-\beta_{n-1})) \int_{D}\zeta_0^\pm((v_{n-1})_x,(w_{n-1})_t)\,dxdt \\
& \ge \underset{n\to\infty}{\mathrm{lim\,sup}}\,(1-(\beta_n-\beta_{n-1})) (1-(\beta_{n-1}-\beta_{n-2})) \int_{D}\zeta_0^\pm((v_{n-2})_x,(w_{n-2})_t)\,dxdt
\end{split}
\]
\[
\begin{split}
& \quad\vdots \\
& \ge \underset{n\to\infty}{\mathrm{lim\,sup}}\,(1-(\beta_n-\beta_{n-1})) \cdots (1-(\beta_{i+1}-\beta_{i})) \int_{D}\zeta_0^\pm((v_{i})_x,(w_{i})_t)\,dxdt \\
& \ge \frac{c_3}{2}(\beta_i-\beta_{i-1})|D|>0.
\end{split}
\]

Next, let $O\subset Q$ be any nonempty open set. Since $\{Q_i\}_{i\in\N}$ is a Vitali cover of $Q$, we have $O\cap Q_{i_2}\ne\emptyset$ for some $i_2\in\N.$
Thus, from (a), (d), and (f) in Lemma \ref{lem:main}, there exist an $i_3>\max\{i_1,i_2\}$ and a $D_3\in\mathcal{F}_{i_3 -1}$ such that
\[
D_3\subset O\cap Q_{i_2}.
\]
This inclusion and the above positivity estimate imply that
\[
\int_{O} \zeta_0^\pm(v_x,w_t)\,dxdt\ge \int_{D_3} \zeta_0^\pm(v_x,w_t)\,dxdt>0.
\]
So from the definition of $\zeta_0^\pm$, there are two disjoint sets $O^\pm\subset O$ of positive measure such that
\[
\mathrm{dist}((v_x,w_t),K^\pm)<\frac{d_0}{2}\;\;\mbox{in $O^\pm;$}
\]
thus, with (\ref{completion-4}), we conclude that
\[
(v_x,w_t)\in K^\pm\;\;\mbox{a.e. in $O^\pm.$}
\]
Therefore, (vi) in Theorem \ref{thm:two-wall} follows from the definition of $K^\pm.$

The proof of Theorem \ref{thm:two-wall} is now complete.

\section{Proof of Lemma \ref{lem:main}}\label{sec:proof-main-lem}

This final section is entirely devoted to a proof of Lemma \ref{lem:main}. First, we begin with an elementary lemma in subsection \ref{subsec:patching-function} that provides us with a building block for our constructions. Then we perform inductive surgeries in subsection \ref{subsec:surgeries} that complete the proof of Lemma \ref{lem:main}.

\subsection{Piecewise affine maps with boundary trace $0$}\label{subsec:patching-function}

Although the following theorem is essential in our constructions, its proof is so elementary that we omit it (see \cite{Zh1}).

\begin{lem}\label{lem:patching-function}
Let $\tau^\pm$ and $\delta$ be any three positive numbers, and let $D=D_{\delta}$ be the interior of the convex hull of the four points $(\pm\delta,0)$ and $(0,\pm 1)$ in $\R^2;$ that is,
\[
D=\mathrm{int\,co}\{(\delta,0),\, (-\delta,0),\, (0,1),\,(0,-1)\}.
\]
Let $\varphi=\varphi_{\tau^+,\tau^-,\delta}: \bar{D}\to\R$ be the piecewise affine map defined as follows: for  each $0\le t\le 1,$ define
\[
\varphi(x,t)=\left\{\begin{array}{ll}
                     \tau^+(x+\delta-\delta t), & \mbox{$-\delta+\delta t\le x\le \frac{\tau^+}{\tau^+ +\tau^-}(-\delta+\delta t),$} \\[1.5mm]
                     -\tau^- x, & \mbox{$\frac{\tau^+}{\tau^+ +\tau^-}(-\delta+\delta t) \le x \le \frac{\tau^+}{\tau^+ +\tau^-}(\delta- \delta t),$} \\[1.5mm]
                     \tau^+(x-\delta+ \delta t), & \mbox{$\frac{\tau^+}{\tau^+ +\tau^-}(\delta- \delta t) \le x\le \delta- \delta t;$}
                   \end{array}
 \right.
\]
and for each $(x,t)\in\bar{D}$ with $t\le0,$ define $\varphi(x,t)=\varphi(x,-t).$
Then $\varphi$ satisfies the following:
\begin{itemize}
\item[(i)] $\varphi\in W^{1,\infty}_0(D);$
\item[(ii)] there are five disjoint domains $T^1,T^2,T^3,T^4,R\subset D$ which cover $D$ in the sense of Vitali, and $\varphi$ is affine in each of $T^1,T^2,T^3,T^4,R;$
\item[(iii)] after a proper ordering of $T^1,T^2,T^3,T^4,R,$
\[\nabla\varphi = \left\{\begin{array}{ll}
                                                (\tau^+,\delta\tau^+) & \mbox{in $T^1\cup T^2,$} \\[1.5mm]
                                                (\tau^+,-\delta\tau^+) & \mbox{in $T^3\cup T^4,$} \\[1.5mm]
                                                (-\tau^-,0) & \mbox{in $R ;$}
                                              \end{array}
 \right.\]
\item[(iv)] for each $t\in[-1,1],$
\[
\int_{-\delta+\delta|t|}^{\delta-\delta|t|}\varphi(x,t)\,dx=0;
\]
\item[(v)] $\|\varphi\|_{L^\infty(D)}=\frac{\tau^+\tau^-}{\tau^++\tau^-}\delta;$
\item[(vi)] $\left\{\begin{array}{l}
                     \big|\{(x,t)\in D\,|\,\varphi_x(x,t)=\tau^+ \} \big|=\frac{\tau^-}{\tau^++\tau^-}|D|, \\[1.5mm]
                     \big|\{(x,t)\in D\,|\,\varphi_x(x,t)=-\tau^- \} \big|=\frac{\tau^+}{\tau^++\tau^-}|D|.
                   \end{array}
 \right.$
\end{itemize}
\end{lem}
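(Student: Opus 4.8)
The map $\varphi$ is prescribed by an explicit piecewise-affine formula, so the proof reduces to a sequence of routine verifications. First I would check that the three affine branches defining $\varphi$ on a slice $\{t=\mathrm{const}\}$, $0\le t\le1$, agree on the two interface lines $x=\frac{\tau^+}{\tau^++\tau^-}(-\delta+\delta t)$ and $x=\frac{\tau^+}{\tau^++\tau^-}(\delta-\delta t)$: substituting these values into the two neighbouring formulas yields, in each case, the common value $\pm\frac{\tau^+\tau^-}{\tau^++\tau^-}(\delta-\delta t)$, and the two outer branches vanish at the slice endpoints $x=\mp(\delta-\delta t)$, which are exactly the points of $\partial D$ at height $t$. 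Hence $\varphi$ is continuous on $\bar D\cap\{t\ge0\}$, and the rule $\varphi(x,t)=\varphi(x,-t)$ extends it continuously to $\bar D$; being piecewise affine it is Lipschitz, so $\varphi\in W^{1,\infty}(D)$, and since $\varphi\equiv0$ on $\partial D$ in fact $\varphi\in W^{1,\infty}_0(D)$. This is (i).

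Next I would read off the geometry. For $t\in[0,1]$ the slice is $x\in[-a,a]$ with $a=\delta(1-t)$, cut at $x=\pm\frac{\tau^+}{\tau^++\tau^-}a$; letting $t$ run over $[0,1]$, the two outer portions sweep out triangles with apex $(0,1)$, and the middle portion $\{|x|\le\frac{\tau^+}{\tau^++\tau^-}a\}$ sweeps out a third triangle with the same apex. Reflecting through $t=0$ produces the mirror pieces with apex $(0,-1)$, and the two middle triangles glue along the segment $\{t=0,\ |x|\le\frac{\tau^+}{\tau^++\tau^-}\delta\}$ into one rhombus
\[
R=\mathrm{int\,co}\Big\{\Big(\tfrac{\tau^+\delta}{\tau^++\tau^-},0\Big),\Big(-\tfrac{\tau^+\delta}{\tau^++\tau^-},0\Big),(0,1),(0,-1)\Big\},
\]
while the four outer pieces are the triangles $T^1,\dots,T^4$. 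These five open sets are pairwise disjoint, their closures cover $\bar D$, and their overlaps are segments of measure zero, so they cover $D$ in the sense of Vitali and $\varphi$ is affine on each, giving (ii). Differentiating the branch formulas yields $\nabla\varphi=(\tau^+,-\delta\tau^+)$ and $(\tau^+,\delta\tau^+)$ on the two outer branches for $t\ge0$; applying $\varphi(x,t)=\varphi(x,-t)$ flips the sign of $\varphi_t$ on their reflections, and $\nabla\varphi=(-\tau^-,0)$ throughout $R$. Grouping the four outer triangles by the sign of $\varphi_t$ and relabelling gives (iii).

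For (iv), on each fixed slice $\varphi(\cdot,t)$ is an odd function of $x$ — the middle branch $-\tau^-x$ is odd, and $x\mapsto-x$ interchanges the two outer branches while negating the value — so $\int_{-a}^{a}\varphi(x,t)\,dx=0$. For (v), the peak of $|\varphi|$ on the slice at height $t$ is the interface value $\frac{\tau^+\tau^-}{\tau^++\tau^-}(\delta-\delta|t|)$, largest at $t=0$. For (vi), I would simply compute areas of rhombi: $D$ has diagonals $2\delta$ and $2$, so $|D|=2\delta$, and $R$ has diagonals $\frac{2\tau^+\delta}{\tau^++\tau^-}$ and $2$, so $|R|=\frac{\tau^+}{\tau^++\tau^-}|D|$; since $\{\varphi_x=-\tau^-\}=R$ and $\{\varphi_x=\tau^+\}=T^1\cup\cdots\cup T^4$ up to null sets, their measures are $\frac{\tau^+}{\tau^++\tau^-}|D|$ and $|D|-|R|=\frac{\tau^-}{\tau^++\tau^-}|D|$, respectively. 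Every step is an explicit computation with the given formula, so there is no genuine obstacle; the only points needing a little care are the continuity check at the two interfaces and the bookkeeping of which of $T^1,\dots,T^4$ carries which gradient after the reflection in $t$ — precisely the routine detail the paper omits.
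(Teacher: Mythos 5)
Your verification is correct: the continuity checks at the two interfaces and at $\partial D$, the identification of the four outer triangles and the central rhombus $R$, the gradient bookkeeping under the reflection $t\mapsto -t$, the oddness argument for (iv), and the diagonal-product area computations for (vi) all hold up. The paper omits the proof entirely (citing Zhang) precisely because it is this routine explicit verification, so your argument is exactly the intended one.
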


\begin{figure}[ht]
\begin{center}
\begin{tikzpicture}[scale = 1]
    \draw[->] (6.5,2.5) -- (10,2.5);
    \draw[-] (0,2.5) -- (3.5,2.5);
	\draw[->] (5,4.5) -- (5,5);
    \draw[-] (5,0) -- (5,0.5);
    \draw[-] (9,2.5) -- (5,4.5);
    \draw[-] (1,2.5) -- (5,4.5);
    \draw[-] (9,2.5) -- (5,0.5);
    \draw[-] (1,2.5) -- (5,0.5);
    \draw[-] (6.5,2.5) -- (5,4.5);
    \draw[-] (3.5,2.5) -- (5,4.5);
    \draw[-] (6.5,2.5) -- (5,0.5);
    \draw[-] (3.5,2.5) -- (5,0.5);
    \draw[dashed] (3.5,2.5)--(3.5,0.9);
    \draw[dashed] (6.5,2.5)--(6.5,0.9);
	\draw (10,2.4) node[below] {$x$};
    \draw (4.7,5.2) node[below] {$t$};
    \draw (9,2.4) node[below] {$\delta$};
    \draw (1,2.4) node[below] {$-\delta$};
    \draw (5.25,5) node[below] {$1$};
    \draw (5.4,0.6) node[below] {$-1$};
    \draw (2.9,3.2) node[below] {$T^3$};
    \draw (2.9,2.4) node[below] {$T^2$};
    \draw (3.3,1) node[below] {$\frac{-\tau^+}{\tau^+ +\tau^-}\delta$};
    \draw (6.7,1) node[below] {$\frac{\tau^+}{\tau^+ +\tau^-}\delta$};
    \draw (7.1,3.2) node[below] {$T^1$};
    \draw (7.1,2.4) node[below] {$T^4$};
    \draw (5,2.8) node[below] {$R$};
    \end{tikzpicture}
\end{center}
\caption{Five disjoint domains $T^1,T^2,T^3,T^4,R\subset D$ in Lemma \ref{lem:patching-function}.}
\label{fig3}
\end{figure}
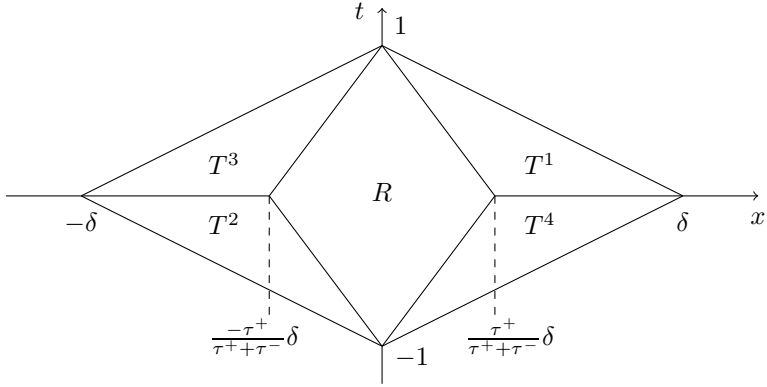


\subsection{Inductive surgeries}\label{subsec:surgeries}

In this subsection, we prove Lemma \ref{lem:main} by utilizing affine maps in Lemma \ref{lem:patching-function}.

Fix any $\epsilon\in(0,1].$ Then we perform the first two surgeries in order that fulfill (a)--(h) for $i=1$ and (a)--(j) for $i=2$. We do not proceed with the $n$th surgery under the assumption that we have performed the surgeries up to the $(n-1)$th one as it is essentially the repetition of the second one under the fulfillment of the first surgery.

\subsubsection{The first surgery}
In this first step, we construct a function $z_1=(v_1,w_1)\in W^{1,\infty}(\Omega_{t_1}^{t_2};\R^2)$, three countable collections $\mathcal{F}_1$ and $\mathcal{F}_1^\pm$, and a constant $c_1>0$ satisfying (a)--(h) for $i=1$, where $c_1$ is independent of the index $i$.

Let $\delta_1\in (0,1]$ be such that
\begin{equation}\label{proof:m-lemma-0-0-0}
\delta_1<\min\Big\{\frac{\epsilon}{2^{1+1}\sqrt{1+L^2}S_M},\frac{\eta_1}{4}\Big\},
\end{equation}
where the constant $S_M>0$ is as in subsection \ref{subsection:completion}.
Since $\nabla z_0$ is uniformly continuous in $Q_1,$ there exists a number $\gamma_1\in(0,\frac{1}{2^{1+1}}]$ such that
\begin{equation}\label{proof:m-lemma-0-0}
(x,t),(y,s)\in Q_1,\,|(x,t)-(y,s)|\le\gamma_1\;\;\Longrightarrow\;\; |\nabla z_0(x,t)-\nabla z_0(y,s)|\le\delta_1.
\end{equation}

Consider the diamond $D_{\delta_1}$. From the Vitali Covering Lemma, we can choose a sequence $\{(x_{1j},t_{1j})\}_{j\in\N}$ in $Q_1$ and a sequence $\{\nu_{1j}\}_{j\in\N}$ in $(0,\gamma_1]$ such that the sequence $\{\tilde{D}_{1j}\}_{j\in\N}$ forms a Vitali cover of $Q_1,$ where $\tilde{D}_{1j}:=(x_{1j},t_{1j})+\nu_{1j} D_{\delta_1}\subset\subset Q_1$ $(j\in\N)$.

For each $j\in\N,$ let
\begin{equation}\label{proof:m-lemma-0-1}
(s_{1j},r_{1j})=((v_0)_x(x_{1j},t_{1j}),(w_0)_t(x_{1j},t_{1j}))\in U^{\lambda_1},
\end{equation}
\begin{equation}\label{proof:m-lemma-0-2}
\tau^+_{1j}=\lambda'_{1}\omega_1(r_{1j})+(1-\lambda'_{1})\omega_2(r_{1j}) -s_{1j}>0,
\end{equation}
\begin{equation}\label{proof:m-lemma-0-3}
\tau^-_{1j}=s_{1j}-(1-\lambda'_{1})\omega_1(r_{1j})-\lambda'_{1}\omega_2(r_{1j})>0,
\end{equation}
and
\begin{equation}\label{proof:m-lemma-0-4}
\varphi_{1j}(x,t)=\left\{
\begin{array}{ll}
  \nu_{1j}\varphi_{\tau^+_{1j},\tau^-_{1j},\delta_1}(\frac{1}{\nu_{1j}}(x-x_{1j},t-t_{1j})), & (x,t)\in \tilde{D}_{1j}, \\
  0, & (x,t)\in\bar{\Omega}_{t_1}^{t_2}\setminus  \tilde{D}_{1j}.
\end{array}
\right.
\end{equation}
Also, for each $j\in\N,$ let $T^{1}_{1j},T^{2}_{1j},T^{3}_{1j},T^{4}_{1j}\subset \tilde{D}_{1j}$ denote the four disjoint triangular domains in each of which $\varphi_{1j}$ is affine and has spatial derivative $\tau^+_{1j}$, and let $R^5_{1j}\subset \tilde{D}_{1j}$ denote the rhombic domain in which $\varphi_{1j}$ is affine and has spatial derivative $-\tau^-_{1j}$; then
\begin{equation}\label{proof:m-lemma-1}
|\tilde{D}_{1j}|=|T^1_{1j}\cup T^2_{1j}\cup T^3_{1j}\cup T^4_{1j}\cup R^5_{1j}|.
\end{equation}
We write
\[
\mathcal{F}_1=\{\tilde{D}_{1j}\,|\,j\in\N\},\;\;\mathcal{F}^+_{1}=\{T^{1}_{1j},T^{2}_{1j},T^{3}_{1j},T^{4}_{1j}\,|\, j\in\N\},\;\;\mbox{and}\;\; \mathcal{F}^-_{1}=\{R^5_{1j}\,|\, j\in\N\};
\]
here, let $\{D_{1j}\}_{j\in\N}$, $\{T_{1j}\}_{j\in\N},$ and $\{R_{1j}\}_{j\in\N}$ be enumerations of $\mathcal{F}_1,$ $\mathcal{F}^+_{1},$ and $\mathcal{F}^-_{1}$, respectively.

Since $\mathcal{F}_1$ is a Vitali cover of $Q_1$, it follows from the definition of $\mathcal{F}^\pm_1$, (\ref{proof:m-lemma-1}), and $2\nu_{1j}\le 2\gamma_1\le \frac{1}{2^1}$ $(j\in\N)$ that (a), (b), (c), (d), (e), and (f) for $i=1$ hold.

To check the rest, define
\[
\varphi_1=\sum_{j=1}^\infty \varphi_{1j}\;\;\mbox{on $\bar{\Omega}_{t_1}^{t_2}$}.
\]
Also, let $N\in\N$ and
\[
\varphi_1^N=\sum_{j=1}^N \varphi_{1j}\;\;\mbox{on $\bar{\Omega}_{t_1}^{t_2}$}.
\]
Then note from the definition of $\varphi_{1j}$ $(j\in\N)$ and (iii) in Lemma \ref{lem:patching-function} that
\begin{equation}\label{finalproof-1}
\begin{split}
|\varphi_{1}^N(x,t)-\varphi_{1}^N(y,s) | & \le |(x,t)-(y,s)|\max_{1\le j\le N}\max\{|(\tau^+_{1j},\delta_1\tau^+_{1j})|,\tau^-_{1j} \} \\
& \le \sqrt{2}S_M|(x,t)-(y,s)|
\end{split}
\end{equation}
for all $(x,t),(y,s)\in \Omega_{t_1}^{t_2}$ and from $\tilde{D}_{1j}\subset\subset Q_1$  $(j\in\N)$ that
\begin{equation}\label{finalproof-2}
\varphi_{1}^N=0\;\;\mbox{and}\;\;\nabla \varphi_{1}^N =0\;\;\mbox{on $\bar{\Omega}_{t_1}^{t_2}\setminus Q_1$}.
\end{equation}
Observe from (v) in Lemma \ref{lem:patching-function} that
\begin{equation}\label{finalproof-3}
\|\varphi_1^N-\varphi_1\|_{L^1(\Omega_{t_1}^{t_2})}\le \sum_{j=N+1}^\infty|\tilde{D}_{1j}| \sup_{j\ge N+1}\frac{\nu_{1j}\tau^+_{1j}\tau^-_{1j}}{\tau^+_{1j}+\tau^-_{1j}}\delta_1 \le  S_M \sum_{j=N+1}^\infty|\tilde{D}_{1j}|\to 0
\end{equation}
as $N\to\infty$; thus,
it follows from (\ref{finalproof-1}) that
\[
|\varphi_1(x,t)-\varphi_1(y,s)|\le \sqrt{2}S_M|(x,t)-(y,s)|\;\;\forall (x,t),(y,s)\in \Omega_{t_1}^{t_2},
\]
that is,
$
\varphi_1\in W^{1,\infty}(\Omega_{t_1}^{t_2}),
$
and from (\ref{finalproof-2}) that
\begin{equation}\label{proof:m-lemma-2}
\varphi_1=0\;\;\mbox{on $\bar{\Omega}_{t_1}^{t_2}\setminus Q_1$}.
\end{equation}
From (iii) in Lemma \ref{lem:patching-function},
\[
\begin{split}
\|\nabla\varphi_1^{N_2}-\nabla\varphi_1^{N_1}\|_{L^1(\Omega_{t_1}^{t_2};\R^2)} & \le \sum_{j=N_1+1}^{N_2}|\tilde{D}_{1j}| \max_{N_1+1\le j\le N_2}\max\{|(\tau^+_{1j},\delta_1\tau^+_{1j})|,\tau^-_{1j} \}\\
& \le \sqrt{2}S_{M} \sum_{j=N_1+1}^{N_2}|\tilde{D}_{1j}|\to 0
\end{split}
\]
as $N_2>N_1\to\infty$; thus, from (\ref{finalproof-3}),
\[
\nabla\varphi_{1}^N\to \nabla\varphi_1\;\;\mbox{in $L^1(\Omega_{t_1}^{t_2};\R^2)$ as $N\to\infty$}
\]
so that (\ref{finalproof-2}) implies that
\begin{equation}\label{proof:m-lemma-2-1}
\nabla\varphi_1=0\;\;\mbox{a.e. in $\Omega_{t_1}^{t_2}\setminus Q_1$.}
\end{equation}

Next, for every $(x,t)\in\bar{\Omega}_{t_1}^{t_2},$ define
\[
\psi_1(x,t)=\int_{0}^x \varphi_1(y,t)\,dy
\]
and
\[
\psi_1^N(x,t)=\int_{0}^x \varphi_1^N(y,t)\,dy\;\;(N\in\N).
\]
Then $\psi_1\in W^{1,\infty}(\Omega_{t_1}^{t_2})$,
\begin{equation}\label{finalproof-7}
(\psi_1)_x=\varphi_1\;\;\mbox{in $\Omega_{t_1}^{t_2}$,}
\end{equation}
from (iv) and (v) in Lemma \ref{lem:patching-function},
\begin{equation}\label{finalproof-4}
\begin{split}
\|\psi_1^N-\psi_1\|_{L^1(\Omega_{t_1}^{t_2})} & =\int_{\Omega_{t_1}^{t_2}} \bigg|\int_0^x (\varphi_1^N(y,t)-\varphi_1(y,t))\,dy\bigg|\,dxdt \\
& = \sum_{j=N+1}^{\infty}\int_{\tilde{D}_{1j}} \bigg|\int_0^x \varphi_{1j}(y,t)\,dy\bigg|\,dxdt \\
&\le \sum_{j=N+1}^{\infty} |\tilde{D}_{1j}|\frac{\nu_{1j}^2\delta_1^2\tau^+_{1j}\tau^-_{1j}}{2(\tau^+_{1j}+\tau^-_{1j})} \le S_M\sum_{j=N+1}^{\infty} |\tilde{D}_{1j}| \to 0
\end{split}
\end{equation}
as $N\to\infty$,
and from (iii) in Lemma \ref{lem:patching-function},
\[
\begin{split}
\|\nabla\psi_1^{N_2}-\nabla\psi_1^{N_1} & \|_{L^1(\Omega_{t_1}^{t_2};\R^2)}  =\int_{\Omega_{t_1}^{t_2}} \bigg|\int_0^x (\nabla\varphi_1^{N_2}(y,t)-\nabla\varphi_1^{N_1}(y,t))\,dy\bigg|\,dxdt \\
& = \sum_{j=N_1+1}^{N_2} \int_{\tilde{D}_{1j}} \bigg|\int_0^x \nabla\varphi_{1j}(y,t)\,dy\bigg|\,dxdt 
\end{split}
\]
\[
\begin{split}
&\le \max_{N_1+1\le j\le N_2} 2\nu_{1j}\delta_1\max\{|(\tau^+_{1j},\delta_1\tau^+_{1j})|,\tau^-_{1j}\} \sum_{j=N_1+1}^{N_2} |\tilde{D}_{1j}|\\
&\le S_M\sum_{j=N_1+1}^{N_2} |\tilde{D}_{1j}| \to 0
\end{split}
\]
as $N_2>N_1\to\infty$; thus,
\begin{equation}\label{finalproof-5}
\nabla\psi_{1}^N\to \nabla\psi_1\;\;\mbox{in $L^1(\Omega_{t_1}^{t_2};\R^2)$ as $N\to\infty$}.
\end{equation}
Note from the definition of $\psi^N_1$ $(N\in\N)$ and (iii) and (iv) in Lemma \ref{lem:patching-function} that for each $N\in\N$,
\begin{equation*}
\psi_{1}^N=0\;\;\mbox{and}\;\;\nabla \psi_{1}^N =0\;\;\mbox{on $\bar{\Omega}_{t_1}^{t_2}\setminus Q_1$};
\end{equation*}
thus, letting $N\to\infty$, it follows from (\ref{finalproof-4}) and (\ref{finalproof-5}) that
\begin{equation}\label{finalproof-6}
\psi_1=0\;\;\mbox{on $\bar{\Omega}_{t_1}^{t_2}\setminus Q_1$}\;\;\mbox{and}\;\; \nabla\psi_1=0\;\;\mbox{a.e. in $\Omega_{t_1}^{t_2}\setminus Q_1$}.
\end{equation}

In turn, define
\begin{equation}\label{proof:m-lemma-3-2}
z_1=(v_1,w_1)=z_0+(\varphi_1,\psi_1)\;\;\mbox{on $\bar{\Omega}_{t_1}^{t_2}$};
\end{equation}
then $z_1\in W^{1,\infty}(\Omega_{t_1}^{t_2};\R^2)$ as $(\varphi_1,\psi_1)\in W^{1,\infty}(\Omega_{t_1}^{t_2};\R^2)$.
From (\ref{proof:m-lemma-2}) and (\ref{finalproof-6}), we have
\[
z_1=z_0\;\;\mbox{on $\bar{\Omega}_{t_1}^{t_2}\setminus Q_1$;}
\]
hence, the third of (g)  for $i=1$ holds.
Also, the fourth of (g)  for $i=1$ follows from (\ref{proof:m-lemma-2-1}), (\ref{finalproof-6}), and $Q_1\subset Q$.
The first of (h) for $i=1$ is a consequence of (\ref{subsolution-1}) and (\ref{finalproof-7}).

From (\ref{proof:m-lemma-0-0}), (\ref{proof:m-lemma-0-1}), (\ref{proof:m-lemma-0-2}), (\ref{proof:m-lemma-0-3}), (\ref{proof:m-lemma-0-4}), (\ref{proof:m-lemma-3-2}), the definition of $U^\pm_1$, $\lambda_1'$, $\eta_1$, and $\mathcal{F}^\pm_1$, and Lemma \ref{lem:patching-function}, the first and second of (g) for $i=1$ are fulfilled.

From (\ref{proof:m-lemma-0-0-0}), (\ref{proof:m-lemma-3-2}), and (iii) in Lemma \ref{lem:patching-function},
we have
\begin{equation}\label{proof:m-lemma-4}
\|(v_1)_t-(v_0)_t\|_{L^\infty(\Omega_{t_1}^{t_2} )}= \|(\varphi_1)_t\|_{L^\infty(\Omega_{t_1}^{t_2} )} \le \delta_1\sup_{j\in\N}\tau^+_{1j} \le \delta_1 S_M\le \frac{\epsilon}{2^{1+1}};
\end{equation}
hence, the second of (h) for $i=1$  holds.

From (\ref{proof:m-lemma-0-0-0}), (\ref{proof:m-lemma-0-4}), (\ref{proof:m-lemma-3-2}),  $\nu_{1j}\le\gamma_1\le\frac{1}{2^{1+1}}$ $(j\in\N)$, and (v) in Lemma \ref{lem:patching-function}, we have
\[
\begin{split}
\|z_1 -z_0\|_{L^\infty(\Omega_{t_1}^{t_2};\R^2)} & = \|(\varphi_1,\psi_1)\|_{L^\infty(\Omega_{t_1}^{t_2};\R^2)} \\
& \le (1+L^2)^{\frac{1}{2}}\delta_1\sup_{j\in\N}\frac{\nu_{1j}\tau^+_{1j}\tau^-_{1j}}{\tau^+_{1j}+\tau^-_{1j}} \le(1+L^2)^{\frac{1}{2}}\delta_1 S_M \le \frac{\epsilon}{2^{1+1}};
\end{split}
\]
thus, the third of (h) for $i=1$  is fulfilled.

Finally, to check the fourth of (h) for $i=1$, note  from the first and third of (h) for $i=1$, (\ref{proof:m-lemma-2-1}), (\ref{finalproof-6}), (\ref{proof:m-lemma-3-2}), and (\ref{proof:m-lemma-4}) that
\[
\begin{split}
\|\nabla z_1  & \|_{L^\infty(\Omega_{t_1}^{t_2};\M^{2\times 2})} \le \|((v_1)_x,(w_1)_t)\|_{L^\infty(\Omega_{t_1}^{t_2};\R^{2})}+ \|(v_1)_t\|_{L^\infty(\Omega_{t_1}^{t_2})} + \|(w_1)_x\|_{L^\infty(\Omega_{t_1}^{t_2})} \\
\le & \,\|((v_0)_x,(w_0)_t)\|_{L^\infty(\Omega_{t_1}^{t_2}\setminus Q_1;\R^{2})} + \|((v_1)_x,(w_1)_t)\|_{L^\infty(Q_1;\R^{2})}+\|(v_0)_t\|_{L^\infty(\Omega_{t_1}^{t_2})}  \\
& + \|(\varphi_1)_t\|_{L^\infty(\Omega_{t_1}^{t_2})} +\|v_1\|_{L^\infty(\Omega_{t_1}^{t_2})}  \\
\le & \, 2\|\nabla z_0\|_{L^\infty(\Omega_{t_1}^{t_2};\M^{2\times 2})} +C_U+\|z_0\|_{L^\infty(\Omega_{t_1}^{t_2};\R^2)}+\frac{2}{2^{1+1}}\le c_1
\end{split}
\]
where $C_U:=\sup_{(s,r)\in U}|(s,r)|$ and
\[
c_1:=2\|\nabla z_0\|_{L^\infty(\Omega_{t_1}^{t_2};\M^{2\times 2})} +C_U+\|z_0\|_{L^\infty(\Omega_{t_1}^{t_2};\R^2)}+1>0.
\]
Therefore, the fourth of (h) for $i=1$  follows.

The first step is now finished.

\underline{\textbf{The second surgery:}}
In the second step, we construct a function $z_2=(v_2,w_2)\in W^{1,\infty}(\Omega_{t_1}^{t_2};\R^2)$, three countable collections $\mathcal{F}_2$ and $\mathcal{F}_2^\pm$, and two positive constants $c_2$ and $c_3$ satisfying (a)--(j) for $i=2$, where the constant $c_1>0$ is as in the first step, and the two constants $c_2$ and $c_3$ are independent of the index $i$.

Let $\delta_2\in (0,1]$ be such that
\begin{equation}\label{proof:s-lemma-0-0-0}
\delta_2<\min\Big\{\frac{\epsilon}{2^{2+1}\sqrt{1+L^2}S_M},\frac{\eta_2}{4}\Big\}.
\end{equation}
Since $\nabla z_1$ is uniformly continuous in $Q_2,$ there exists a number $\gamma_2\in(0,\frac{1}{2^{2+1}}]$ such that
\begin{equation}\label{proof:s-lemma-0-0-1}
(x,t),(y,s)\in Q_2,\,|(x,t)-(y,s)|\le\gamma_2\;\;\Longrightarrow\;\; |\nabla z_1(x,t)-\nabla z_1(y,s)|\le\delta_2.
\end{equation}

For each $j\in\N,$ since $\nabla z_1$ is uniformly continuous in $T_{1j}$ and in $R_{1j}$,  there exists a number $\gamma_{1j}\in(0,\frac{1}{2^{2+1}}]$ such that
\begin{equation}\label{proof:s-lemma-0-0-2}
(x,t),(y,s)\in T_{1j},\,|(x,t)-(y,s)|\le\gamma_{1j}\;\;\Longrightarrow\;\; |\nabla z_1(x,t)-\nabla z_1(y,s)|\le\delta_2
\end{equation}
and that
\begin{equation}\label{proof:s-lemma-0-0-3}
(x,t),(y,s)\in R_{1j},\,|(x,t)-(y,s)|\le\gamma_{1j}\;\;\Longrightarrow\;\; |\nabla z_1(x,t)-\nabla z_1(y,s)|\le\delta_2.
\end{equation}

Consider the diamond $D_{\delta_2}$. From the Vitali Covering Lemma, we can choose a sequence $\{(x_{2j},t_{2j})\}_{j\in\N}$ in $Q_2$ and a sequence $\{\nu_{2j}\}_{j\in\N}$ in $(0,\gamma_2]$ such that the sequence $\{\tilde{D}_{2j}\}_{j\in\N}$ forms a Vitali cover of $Q_2,$ where $\tilde{D}_{2j}:=(x_{2j},t_{2j})+\nu_{2j} D_{\delta_2}\subset\subset Q_2$ $(j\in\N)$.

Let $j\in\N.$ Also, from the Vitali Covering Lemma, we can choose a sequence $\{(x^+_{1jk},t^+_{1jk})\}_{k\in\N}$ in $T_{1j}$ and a sequence $\{\nu^+_{1jk}\}_{k\in\N}$ in $(0,\gamma_{1j}]$ such that the sequence $\{\tilde{D}^+_{1jk}\}_{k\in\N}$ forms a Vitali cover of $T_{1j}$, where $\tilde{D}^+_{1jk}:=(x^+_{1jk},t^+_{1jk})+\nu^+_{1jk} D_{\delta_2}\subset\subset T_{1j}$ $(k\in\N)$.
Likewise, we can choose a sequence $\{(x^-_{1jk},t^-_{1jk})\}_{k\in\N}$ in $R_{1j}$ and a sequence $\{\nu^-_{1jk}\}_{k\in\N}$ in $(0,\min\{\gamma_{1j},\lambda_1 S_M\}]$ such that the sequence $\{\tilde{D}^-_{1jk}\}_{k\in\N}$ forms a Vitali cover of $R_{1j}$, where $\tilde{D}^-_{1jk}:=(x^-_{1jk},t^-_{1jk})+\nu^-_{1jk} D_{\delta_2}\subset\subset R_{1j}$ $(k\in\N)$.

For each $j\in\N,$ let
\begin{equation}\label{proof:s-lemma-0-1-0}
(s_{2j},r_{2j})=((v_1)_x(x_{2j},t_{2j}),(w_1)_t(x_{2j},t_{2j}))\in U^{\lambda_2}\setminus\bar{U}^{\lambda_1},
\end{equation}
\begin{equation}\label{proof:s-lemma-0-1-1}
\tau^+_{2j}=\lambda'_{2}\omega_1(r_{2j})+(1-\lambda'_{2})\omega_2(r_{2j}) -s_{2j}>0,
\end{equation}
\begin{equation}\label{proof:s-lemma-0-1-2}
\tau^-_{2j}=s_{2j}-(1-\lambda'_{2})\omega_1(r_{2j})-\lambda'_{2}\omega_2(r_{2j})>0,
\end{equation}
and
\begin{equation}\label{proof:s-lemma-0-1-3}
\varphi_{2j}(x,t)=\left\{
\begin{array}{ll}
  \nu_{2j}\varphi_{\tau^+_{2j},\tau^-_{2j},\delta_2}(\frac{1}{\nu_{2j}}(x-x_{2j},t-t_{2j})), & (x,t)\in \tilde{D}_{2j}, \\
  0, & (x,t)\in\bar{\Omega}_{t_1}^{t_2}\setminus  \tilde{D}_{2j}.
\end{array}
\right.
\end{equation}
Also, for each $j\in\N,$ let $T^{1}_{2j},T^{2}_{2j},T^{3}_{2j},T^{4}_{2j}\subset \tilde{D}_{2j}$ denote the four disjoint triangular domains in each of which $\varphi_{2j}$ is affine and has spatial derivative $\tau^+_{2j}$, and let $R^5_{2j}\subset \tilde{D}_{2j}$ denote the rhombic domain in which $\varphi_{2j}$ is affine and has spatial derivative $-\tau^-_{2j}$; then
\begin{equation}\label{proof:s-lemma-0-1-4}
|\tilde{D}_{2j}|=|T^1_{2j}\cup T^2_{2j}\cup T^3_{2j}\cup T^4_{2j}\cup R^5_{2j}|.
\end{equation}

Let $j,k\in\N,$ and let
\begin{equation}\label{proof:s-lemma-0-2-0}
(s^+_{1jk},r^+_{1jk})=((v_1)_x(x^+_{1jk},t^+_{1jk}),(w_1)_t(x^+_{1jk},t^+_{1jk}))\in U^+_1,
\end{equation}
\begin{equation}\label{proof:s-lemma-0-2-1}
\tau^{++}_{1jk}=\lambda'_{2}\omega_1(r^+_{1jk})+(1-\lambda'_{2})\omega_2(r^+_{1jk}) -s^+_{1jk}>0,
\end{equation}
\begin{equation}\label{proof:s-lemma-0-2-2}
\tau^{+-}_{1jk}=s^+_{1jk}-(1-\lambda'_{2})\omega_1(r^+_{1jk})-\lambda'_{2}\omega_2(r^+_{1jk})>0,
\end{equation}
and
\begin{equation}\label{proof:s-lemma-0-2-3}
\varphi^+_{1jk}(x,t)=\left\{
\begin{array}{ll}
  \nu^+_{1jk}\varphi_{\tau^{++}_{1jk},\tau^{+-}_{1jk},\delta_2}(\frac{1}{\nu^+_{1jk}}(x-x^+_{1jk},t-t^+_{1jk})), & (x,t)\in \tilde{D}^+_{1jk}, \\
  0, & (x,t)\in\bar{\Omega}_{t_1}^{t_2}\setminus  \tilde{D}^+_{1jk}.
\end{array}
\right.
\end{equation}
Observe from (\ref{proof:s-lemma-0-2-0}) that
\begin{equation}\label{finalproof-9}
\begin{split}
\frac{\tau^{++}_{1jk}}{\tau^{++}_{1jk}+\tau^{+-}_{1jk}} & = \frac{\lambda'_{2}\omega_1(r^+_{1jk})+(1-\lambda'_{2})\omega_2(r^+_{1jk}) -s^+_{1jk}}{(1-2\lambda'_{2})(\omega_2(r^+_{1jk})-\omega_1(r^+_{1jk}))} \\
& \le \frac{(\lambda_1-\lambda'_{2})(\omega_2(r^+_{1jk})-\omega_1(r^+_{1jk}))}{(1-2\lambda'_{2})(\omega_2(r^+_{1jk})-\omega_1(r^+_{1jk}))} = \frac{\lambda_1-\lambda_2'}{1-2\lambda_2'}
\end{split}
\end{equation}
and that
\begin{equation}\label{finalproof-12}
\frac{\tau^{++}_{1jk}}{\tau^{++}_{1jk}+\tau^{+-}_{1jk}}\ge \frac{\lambda_2-\lambda_2'}{1-2\lambda_2'}.
\end{equation}
Note also that
\begin{equation}\label{finalproof-14}
\begin{split}
\frac{\tau^{+-}_{1jk}}{\tau^{++}_{1jk}+\tau^{+-}_{1jk}} & = \frac{s^+_{1jk}-(1-\lambda'_{2})\omega_1(r^+_{1jk})-\lambda'_{2}\omega_2(r^+_{1jk})}{(1-2\lambda'_{2})(\omega_2(r^+_{1jk})-\omega_1(r^+_{1jk}))} \\
& \ge \frac{(1-\lambda_1-\lambda'_{2})(\omega_2(r^+_{1jk})-\omega_1(r^+_{1jk}))}{(1-2\lambda'_{2})(\omega_2(r^+_{1jk})-\omega_1(r^+_{1jk}))} = \frac{1-\lambda_1-\lambda_2'}{1-2\lambda_2'}.
\end{split}
\end{equation}
Here, we claim that
\begin{equation}\label{finalproof-15}
\frac{1-\lambda_1-\lambda_2'}{1-2\lambda_2'}> 1-(\beta_2-\beta_1).
\end{equation}
To check this, we start from the inequality $\lambda_1>\lambda_2'$. Since $\ell_0<\kappa_0\ell_0<1,$ we have $\frac{1}{\kappa_0\ell_0}-1>0$ so that
\[
\bigg(\frac{1}{\kappa_0\ell_0}-1\bigg)\lambda_1>\bigg(\frac{1}{\kappa_0\ell_0}-1\bigg)\lambda_2';
\]
that is,
\[
1-\lambda_1-\lambda_2'>1-2\lambda_2'-\frac{1}{\kappa_0\ell_0}(\lambda_1-\lambda_2').
\]
As $1-2\lambda_2'>0$, we now have
\[
\frac{1-\lambda_1-\lambda_2'}{1-2\lambda_2'}>1-\frac{1}{\kappa_0\ell_0}\cdot\frac{\lambda_1-\lambda_2'}{1-2\lambda_2'}=1-\frac{\alpha_1}{\kappa_0\ell_0}=1-(\beta_2-\beta_1);
\]
hence the claim holds.

Let $T^{+,1}_{1jk},T^{+,2}_{1jk},T^{+,3}_{1jk},T^{+,4}_{1jk}\subset \tilde{D}^{+}_{1jk}$ denote the four disjoint triangular domains in each of which $\varphi^{+}_{1jk}$ is affine and has spatial derivative $\tau^{++}_{1jk}$, and let $R^{+,5}_{1jk}\subset \tilde{D}^{+}_{1jk}$ denote the rhombic domain in which $\varphi^{+}_{1jk}$ is affine and has spatial derivative $-\tau^{+-}_{1jk}$; then
\begin{equation}\label{proof:s-lemma-0-2-4}
|\tilde{D}^{+}_{1jk}|=|T^{+,1}_{1jk}\cup T^{+,2}_{1jk}\cup T^{+,3}_{1jk}\cup T^{+,4}_{1jk}\cup R^{+,5}_{1jk}|.
\end{equation}

Let $j,k\in\N,$ and let
\begin{equation}\label{proof:s-lemma-0-3-0}
(s^-_{1jk},r^-_{1jk})=((v_1)_x(x^-_{1jk},t^-_{1jk}),(w_1)_t(x^-_{1jk},t^-_{1jk}))\in U^-_1,
\end{equation}
\begin{equation}\label{proof:s-lemma-0-3-1}
\tau^{-+}_{1jk}=\lambda'_{2}\omega_1(r^-_{1jk})+(1-\lambda'_{2})\omega_2(r^-_{1jk}) -s^-_{1jk}>0,
\end{equation}
\begin{equation}\label{proof:s-lemma-0-3-2}
\tau^{--}_{1jk}=s^-_{1jk}-(1-\lambda'_{2})\omega_1(r^-_{1jk})-\lambda'_{2}\omega_2(r^-_{1jk})>0,
\end{equation}
and
\begin{equation}\label{proof:s-lemma-0-3-3}
\varphi^-_{1jk}(x,t)=\left\{
\begin{array}{ll}
  \nu^-_{1jk}\varphi_{\tau^{-+}_{1jk},\tau^{--}_{1jk},\delta_2}(\frac{1}{\nu^-_{1jk}}(x-x^-_{1jk},t-t^-_{1jk})), & (x,t)\in \tilde{D}^-_{1jk}, \\
  0, & (x,t)\in\bar{\Omega}_{t_1}^{t_2}\setminus  \tilde{D}^-_{1jk}.
\end{array}
\right.
\end{equation}
Observe from (\ref{proof:s-lemma-0-3-0}) that
\begin{equation}\label{finalproof-10}
\begin{split}
\frac{\tau^{--}_{1jk}}{\tau^{-+}_{1jk}+\tau^{--}_{1jk}} & = \frac{s^-_{1jk}-(1-\lambda'_{2})\omega_1(r^-_{1jk})-\lambda'_{2}\omega_2(r^-_{1jk})}{(1-2\lambda'_{2})(\omega_2(r^-_{1jk})-\omega_1(r^-_{1jk}))} \\
& \le \frac{(\lambda_1-\lambda'_{2})(\omega_2(r^-_{1jk})-\omega_1(r^-_{1jk}))}{(1-2\lambda'_{2})(\omega_2(r^-_{1jk})-\omega_1(r^-_{1jk}))} = \frac{\lambda_1-\lambda_2'}{1-2\lambda_2'}
\end{split}
\end{equation}
and that
\begin{equation}\label{finalproof-13}
\frac{\tau^{--}_{1jk}}{\tau^{-+}_{1jk}+\tau^{--}_{1jk}}\ge \frac{\lambda_2-\lambda_2'}{1-2\lambda_2'}.
\end{equation}
As above, we also have
\[
\frac{\tau_{1jk}^{-+}}{\tau_{1jk}^{-+}+\tau_{1jk}^{--}}\ge\frac{1-\lambda_1-\lambda_2'}{1-2\lambda_2'}>1-(\beta_2-\beta_1).
\]

Let $T^{-,1}_{1jk},T^{-,2}_{1jk},T^{-,3}_{1jk},T^{-,4}_{1jk}\subset \tilde{D}^{-}_{1jk}$ denote the four disjoint triangular domains in each of which $\varphi^{-}_{1jk}$ is affine and has spatial derivative $\tau^{-+}_{1jk}$, and let $R^{-,5}_{1jk}\subset \tilde{D}^{-}_{1jk}$ denote the rhombic domain in which $\varphi^{-}_{1jk}$ is affine and has spatial derivative $-\tau^{--}_{1jk}$; then
\begin{equation}\label{proof:s-lemma-0-3-4}
|\tilde{D}^{-}_{1jk}|=|T^{-,1}_{1jk}\cup T^{-,2}_{1jk}\cup T^{-,3}_{1jk}\cup T^{-,4}_{1jk}\cup R^{-,5}_{1jk}|.
\end{equation}

We write
\[
\mathcal{F}_2= \{\tilde{D}_{2j}\,|\, j\in\N\}\cup \{\tilde{D}^+_{1jk}\,|\, j,k\in\N\}\cup \{\tilde{D}^-_{1jk}\,|\, j,k\in\N\},
\]
\[
\begin{split}
\mathcal{F}^+_{2}= & \{T^{1}_{2j},T^{2}_{2j},T^{3}_{2j},T^{4}_{2j}\,|\, j\in\N\}\cup \{T^{+,1}_{1jk},T^{+,2}_{1jk},T^{+,3}_{1jk},T^{+,4}_{1jk}\,|\, j,k\in\N\}\\
& \cup \{T^{-,1}_{1jk},T^{-,2}_{1jk},T^{-,3}_{1jk},T^{-,4}_{1jk}\,|\, j,k\in\N\},
\end{split}
\]
and
\[
\mathcal{F}^-_{2}=\{R^5_{2j}\,|\, j\in\N\}\cup \{R^{+,5}_{1jk}\,|\, j,k\in\N\}\cup \{R^{-,5}_{1jk}\,|\, j,k\in\N\};
\]
here, let $\{D_{2j}\}_{j\in\N},$ $\{T_{2j}\}_{j\in\N},$ and $\{R_{2j}\}_{j\in\N}$ be enumerations of $\mathcal{F}_{2}$, $\mathcal{F}^+_{2},$ and $\mathcal{F}^-_{2}$, respectively.

Since $\mathcal{F}_2$ is a Vitali cover of $Q_1\cup Q_2,$ it follows from the definition of $\mathcal{F}^\pm_2,$ (\ref{proof:s-lemma-0-1-4}), (\ref{proof:s-lemma-0-2-4}), (\ref{proof:s-lemma-0-3-4}), $2\nu_{2j}\le 2\gamma_2\le\frac{1}{2^2}$ $(j\in\N)$, and $2\nu^\pm_{1jk}\le 2\gamma_{1j}\le\frac{1}{2^2}$ $(j,k\in\N)$ that (a), (b), (c), (d), (e), and (f) for $i=2$ hold.

To check the rest, define
\begin{equation}\label{finalproof-8}
\varphi_2=\sum_{j=1}^\infty \varphi_{2j}+ \sum_{j=1}^\infty\sum_{k=1}^\infty \varphi^+_{1jk} + \sum_{j=1}^\infty\sum_{k=1}^\infty \varphi^-_{1jk}\;\;\mbox{on $\bar{\Omega}_{t_1}^{t_2}$};
\end{equation}
then as in the first step, we can see that
\[
|\varphi_2(x,t)-\varphi_2(y,s)|\le \sqrt{2} S_M|(x,t)-(y,s)|\;\;\forall (x,t),(y,s)\in\Omega_{t_1}^{t_2},
\]
that is, $\varphi_2\in W^{1,\infty}(\Omega_{t_1}^{t_2})$,
\begin{equation}\label{proof:s-lemma-0-4-0}
\varphi_2=0\;\;\mbox{on $\bar{\Omega}_{t_1}^{t_2}\setminus(Q_1\cup Q_2)$},
\end{equation}
and
\begin{equation}\label{proof:s-lemma-0-4-1}
\nabla\varphi_2=0\;\;\mbox{a.e. in $\Omega_{t_1}^{t_2}\setminus(Q_1\cup Q_2).$}
\end{equation}

Next, for every $(x,t)\in\bar{\Omega}_{t_1}^{t_2},$ define
\[
\psi_2(x,t)=\int_{0}^x \varphi_2(y,t)\,dy;
\]
then as in the first step, we can check that $\psi_2\in W^{1,\infty}(\Omega_{t_1}^{t_2})$,
\begin{equation}\label{proof:s-lemma-0-5-1}
(\psi_2)_x=\varphi_2\;\;\mbox{in $\Omega_{t_2}^{t_1}$,}
\end{equation}
\begin{equation}\label{proof:s-lemma-0-5-0}
\psi_2=0\;\;\mbox{on $\bar{\Omega}_{t_1}^{t_2}\setminus(Q_1\cup Q_2)$,}\;\;\mbox{and}\;\; \nabla\psi_2=0\;\;\mbox{a.e. in $\Omega_{t_1}^{t_2}\setminus(Q_1\cup Q_2)$.}
\end{equation}

In turn, define
\begin{equation}\label{proof:s-lemma-0-6-0}
z_2=(v_2,w_2)=z_1+(\varphi_2,\psi_2)\;\;\mbox{on $\bar{\Omega}_{t_1}^{t_2}$};
\end{equation}
then $z_2\in W^{1,\infty}(\Omega_{t_1}^{t_2};\R^2)$ as $(\varphi_2,\psi_2)\in W^{1,\infty}(\Omega_{t_1}^{t_2};\R^2)$.
From (\ref{proof:s-lemma-0-4-0}), (\ref{proof:s-lemma-0-5-0}), and the third of (g) for $i=1$, we have
\[
z_2=z_0\;\;\mbox{on $\bar{\Omega}_{t_1}^{t_2}\setminus (Q_1\cup Q_2)$;}
\]
hence, the third of (g) for $i=2$ holds. Also, the fourth of (g) for $i=2$ follows from (\ref{proof:s-lemma-0-4-1}), (\ref{proof:s-lemma-0-5-0}), and $Q_1\cup Q_2\subset Q$. The first of (h) for $i=2$ is implied by (\ref{proof:s-lemma-0-5-1}) and the first of (h) for $i=1$.

From (\ref{proof:s-lemma-0-0-1}), (\ref{proof:s-lemma-0-0-2}), (\ref{proof:s-lemma-0-0-3}), (\ref{proof:s-lemma-0-1-0}), (\ref{proof:s-lemma-0-1-1}), (\ref{proof:s-lemma-0-1-2}), (\ref{proof:s-lemma-0-1-3}), (\ref{proof:s-lemma-0-2-0}), (\ref{proof:s-lemma-0-2-1}), (\ref{proof:s-lemma-0-2-2}), (\ref{proof:s-lemma-0-2-3}), (\ref{proof:s-lemma-0-3-0}), (\ref{proof:s-lemma-0-3-1}), (\ref{proof:s-lemma-0-3-2}), (\ref{proof:s-lemma-0-3-3}), (\ref{proof:s-lemma-0-6-0}), the definition of $U^\pm_2$, $\lambda_2'$, $\eta_2$, and $\mathcal{F}^\pm_2$, and Lemma \ref{lem:patching-function}, the first and second of (g) for $i=2$ are satisfied.

From (\ref{proof:s-lemma-0-0-0}), (\ref{proof:s-lemma-0-6-0}), and (iii) in Lemma \ref{lem:patching-function}, we have
\begin{equation}\label{proof:s-lemma-0-7-0}
\begin{split}
\|(v_2)_t-(v_1)_t\|_{L^\infty(\Omega_{t_1}^{t_2})}& = \|(\varphi_2)_t\|_{L^\infty(\Omega_{t_1}^{t_2})} \\
& \le\delta_2\sup_{j,k\in\N} \max\{\tau^{+}_{2j},\tau^{++}_{1jk},\tau^{-+}_{1jk}\}\le\delta_2 S_M\le\frac{\epsilon}{2^{2+1}};
\end{split}
\end{equation}
hence, the second of (h) for $i=2$ holds.

From (\ref{proof:s-lemma-0-0-0}), (\ref{proof:s-lemma-0-1-3}), (\ref{proof:s-lemma-0-2-3}), (\ref{proof:s-lemma-0-3-3}), (\ref{proof:s-lemma-0-6-0}), $\nu_{2j}\le\gamma_2\le\frac{1}{2^{2+1}}$ $(j\in\N)$, and $\nu^\pm_{1jk}\le\gamma_{1j}\le\frac{1}{2^{2+1}}$ $(j,k\in\N)$, we have
\[
\begin{split}
\|z_2- & z_1\|_{L^\infty(\Omega_{t_1}^{t_2};\R^2)}= \|(\varphi_2,\psi_2)\|_{L^\infty(\Omega_{t_1}^{t_2};\R^2)} \\
& \le (1+L^2)^{\frac{1}{2}}\delta_2 \sup_{j,k\in\N}\max\bigg\{\frac{\nu_{2j}\tau^+_{2j}\tau^-_{2j}}{\tau^+_{2j}+\tau^-_{2j}}, \frac{\nu^+_{1jk}\tau^{++}_{1jk}\tau^{+-}_{1jk}}{\tau^{++}_{1jk}+\tau^{+-}_{1jk}}, \frac{\nu^-_{1jk}\tau^{-+}_{1jk}\tau^{--}_{1jk}}{\tau^{-+}_{1jk}+\tau^{--}_{1jk}} \bigg\} \\
& \le (1+L^2)^{\frac{1}{2}}\delta_2 S_M \le \frac{\epsilon}{2^{2+1}};
\end{split}
\]
thus, the third of (h) for $i=2$ is satisfied.

To check the fourth of (h) for $i=2$, note from the first and third of (h) for $i=2$, (\ref{proof:m-lemma-4}), (\ref{proof:s-lemma-0-4-1}), (\ref{proof:s-lemma-0-5-0}), (\ref{proof:s-lemma-0-6-0}), and (\ref{proof:s-lemma-0-7-0}) that

\[
\begin{split}
\|\nabla z_2 & \|_{L^\infty(\Omega_{t_1}^{t_2};\M^{2\times 2})} \le \|((v_2)_x,(w_2)_t)\|_{L^\infty(\Omega_{t_1}^{t_2};\R^2)}+ \|(v_2)_t\|_{L^\infty(\Omega_{t_1}^{t_2})} \\
&\quad\quad\quad\quad\quad\quad\quad\; + \|(w_2)_x\|_{L^\infty(\Omega_{t_1}^{t_2})} \\
\le & \, \|((v_0)_x,(w_0)_t)\|_{L^\infty(\Omega_{t_1}^{t_2}\setminus (Q_1\cup Q_2);\R^2)}+ \|((v_2)_x,(w_2)_t)\|_{L^\infty(Q_1\cup Q_2;\R^2)} \\
&\, + \|(v_0)_t\|_{L^\infty(\Omega_{t_1}^{t_2})} + \|(\varphi_1)_t\|_{L^\infty(\Omega_{t_1}^{t_2})} +\|(\varphi_2)_t\|_{L^\infty(\Omega_{t_1}^{t_2})}  + \|v_2\|_{L^\infty(\Omega_{t_1}^{t_2})} \\
\le & \,2\|\nabla z_0\|_{L^\infty(\Omega_{t_1}^{t_2};\M^{2\times 2})}+ C_U+\|z_0\|_{L^\infty(\Omega_{t_1}^{t_2};\R^2)} + 2\Big(\frac{1}{2^{1+1}}+\frac{1}{2^{2+1}}\Big)\le c_1.
\end{split}
\]
Hence, the fourth of (h) for $i=2$ is true.

We now verify (i) for $i=2.$ Note from (\ref{finalproof-8}), (\ref{proof:s-lemma-0-4-1}), (\ref{proof:s-lemma-0-5-0}), and (\ref{proof:s-lemma-0-6-0}) that
\[
\begin{split}
\int_{\Omega_{t_1}^{t_2}}| & \nabla z_2-\nabla z_1|\,dxdt=\int_{\Omega_{t_1}^{t_2}} |\nabla(\varphi_2,\psi_2)|\,dxdt =\bigg(\int_{Q_1}+\int_{Q_2}\bigg) |\nabla(\varphi_2,\psi_2)|\,dxdt \\
=& \bigcup_{j,k\in\N}\int_{\tilde{D}^+_{1jk}}|\nabla(\varphi^+_{1jk},\psi_2)|\,dxdt +\bigcup_{j,k\in\N}\int_{\tilde{D}^-_{1jk}}|\nabla(\varphi^-_{1jk},\psi_2)|\,dxdt \\
& +\bigcup_{j\in\N}\int_{\tilde{D}_{2j}}|\nabla(\varphi_{2j},\psi_2)|\,dxdt.
\end{split}
\]
Let $j,k\in\N.$ Then from (\ref{choice-lamda-i}), (\ref{choice-beta_i}), (\ref{proof:s-lemma-0-2-3}), (\ref{finalproof-9}), (\ref{proof:s-lemma-0-5-1}), and (iii), (v), and (vi) in Lemma \ref{lem:patching-function},
\[
\begin{split}
\int_{\tilde{D}^+_{1jk}} & |\nabla(\varphi^+_{1jk},\psi_2)|\,dxdt = \int_{T^{+,1}_{1jk}\cup T^{+,2}_{1jk}\cup T^{+,3}_{1jk}\cup T^{+,4}_{1jk}\cup R^{+,5}_{1jk}}|\nabla(\varphi^+_{1jk},\psi_2)|\,dxdt \\
= & \int_{T^{+,1}_{1jk}\cup T^{+,2}_{1jk}\cup T^{+,3}_{1jk}\cup T^{+,4}_{1jk}}((1+\delta_2^2)(\tau^{++}_{1jk})^2+(\varphi^+_{1jk})^2+((\psi_2)_t)^2)^{\frac{1}{2}}\,dxdt\\
& + \int_{R^{+,5}_{1jk}}((\tau^{+-}_{1jk})^2 +(\varphi^+_{1jk})^2+((\psi_2)_t)^2)^{\frac{1}{2}}\,dxdt\\
\le & \bigg((1+\delta_2^2)(\tau^{++}_{1jk})^2+ \bigg(\frac{\delta_2\nu^+_{1jk}\tau^{++}_{1jk}\tau^{+-}_{1jk}}{\tau^{++}_{1jk}+\tau^{+-}_{1jk}}\bigg)^2 +\bigg(\frac{\delta^2_2\nu^+_{1jk}\tau^{++}_{1jk}\tau^{+-}_{1jk}}{\tau^{++}_{1jk}+\tau^{+-}_{1jk}}\bigg)^2\bigg)^{\frac{1}{2}} \frac{\tau^{+-}_{1jk}}{\tau^{++}_{1jk}+\tau^{+-}_{1jk}} |\tilde{D}^+_{1jk}| \\
& + \bigg((\tau^{+-}_{1jk})^2+ \bigg(\frac{\delta_2\nu^+_{1jk}\tau^{++}_{1jk}\tau^{+-}_{1jk}}{\tau^{++}_{1jk}+\tau^{+-}_{1jk}}\bigg)^2 +\bigg(\frac{\delta^2_2\nu^+_{1jk}\tau^{++}_{1jk}\tau^{+-}_{1jk}}{\tau^{++}_{1jk}+\tau^{+-}_{1jk}}\bigg)^2\bigg)^{\frac{1}{2}} \frac{\tau^{++}_{1jk}}{\tau^{++}_{1jk}+\tau^{+-}_{1jk}} |\tilde{D}^+_{1jk}| 
\end{split}
\]
\[
\begin{split}
= & \bigg((1+\delta_2^2)(\tau^{+-}_{1jk})^2+ \bigg(\frac{\delta_2\nu^+_{1jk}(\tau^{+-}_{1jk})^2}{\tau^{++}_{1jk}+\tau^{+-}_{1jk}}\bigg)^2 +\bigg(\frac{\delta_2^2\nu^+_{1jk}(\tau^{+-}_{1jk})^2}{\tau^{++}_{1jk}+\tau^{+-}_{1jk}}\bigg)^2\bigg)^{\frac{1}{2}} \frac{\tau^{++}_{1jk}}{\tau^{++}_{1jk}+\tau^{+-}_{1jk}} |\tilde{D}^+_{1jk}| \\
& + \bigg((\tau^{+-}_{1jk})^2+ \bigg(\frac{\delta_2\nu^+_{1jk}\tau^{++}_{1jk}\tau^{+-}_{1jk}}{\tau^{++}_{1jk}+\tau^{+-}_{1jk}}\bigg)^2 +\bigg(\frac{\delta_2^2\nu^+_{1jk}\tau^{++}_{1jk}\tau^{+-}_{1jk}}{\tau^{++}_{1jk}+\tau^{+-}_{1jk}}\bigg)^2\bigg)^{\frac{1}{2}} \frac{\tau^{++}_{1jk}}{\tau^{++}_{1jk}+\tau^{+-}_{1jk}} |\tilde{D}^+_{1jk}| \\
\le & \bigg(\bigg(2S_M^2+\frac{2S_M^4}{(1-2\lambda_2')^2 S_m^2}\bigg)^{\frac{1}{2}} + \sqrt{3}S_M\bigg)(\beta_2-\beta_1) |\tilde{D}^+_{1jk}| \\
\le & \bigg(\sqrt{2}\bigg(1+\frac{64}{49}\bigg(\frac{S_M}{S_m}\bigg)^2\bigg)^{\frac{1}{2}} + \sqrt{3}\bigg)S_M(\beta_2-\beta_1) |\tilde{D}^+_{1jk}|,
\end{split}
\]
where $S_m:=\min_{[r_1,r_2]}(\omega_2-\omega_1)>0.$ Similarly, from (\ref{choice-lamda-i}), (\ref{choice-beta_i}), (\ref{proof:s-lemma-0-2-3}), (\ref{finalproof-10}), (\ref{proof:s-lemma-0-5-1}), and (iii), (v), and (vi) in Lemma \ref{lem:patching-function},
\[
\begin{split}
\int_{\tilde{D}^-_{1jk}} & |\nabla(\varphi^-_{1jk},\psi_2)|\,dxdt = \int_{T^{-,1}_{1jk}\cup T^{-,2}_{1jk}\cup T^{-,3}_{1jk}\cup T^{-,4}_{1jk}\cup R^{-,5}_{1jk}}|\nabla(\varphi^-_{1jk},\psi_2)|\,dxdt \\
= & \int_{T^{-,1}_{1jk}\cup T^{-,2}_{1jk}\cup T^{-,3}_{1jk}\cup T^{-,4}_{1jk}}((1+\delta_2^2)(\tau^{-+}_{1jk})^2+(\varphi^-_{1jk})^2+((\psi_2)_t)^2)^{\frac{1}{2}}\,dxdt\\
& + \int_{R^{-,5}_{1jk}}((\tau^{--}_{1jk})^2 +(\varphi^-_{1jk})^2+((\psi_2)_t)^2)^{\frac{1}{2}}\,dxdt\\
\le & \bigg((1+\delta_2^2)(\tau^{-+}_{1jk})^2+ \bigg(\frac{\delta_2\nu^-_{1jk}\tau^{-+}_{1jk}\tau^{--}_{1jk}}{\tau^{-+}_{1jk}+\tau^{--}_{1jk}}\bigg)^2 +\bigg(\frac{\delta_2^2\nu^-_{1jk}\tau^{-+}_{1jk}\tau^{--}_{1jk}}{\tau^{-+}_{1jk}+\tau^{--}_{1jk}}\bigg)^2\bigg)^{\frac{1}{2}} \frac{\tau^{--}_{1jk}}{\tau^{-+}_{1jk}+\tau^{--}_{1jk}} |\tilde{D}^-_{1jk}| \\
& + \bigg((\tau^{--}_{1jk})^2+ \bigg(\frac{\delta_2\nu^-_{1jk}\tau^{-+}_{1jk}\tau^{--}_{1jk}}{\tau^{-+}_{1jk}+\tau^{--}_{1jk}}\bigg)^2 +\bigg(\frac{\delta_2^2\nu^-_{1jk}\tau^{-+}_{1jk}\tau^{--}_{1jk}}{\tau^{-+}_{1jk}+\tau^{--}_{1jk}}\bigg)^2\bigg)^{\frac{1}{2}} \frac{\tau^{-+}_{1jk}}{\tau^{-+}_{1jk}+\tau^{--}_{1jk}} |\tilde{D}^-_{1jk}| \\
= & \bigg((1+\delta_2^2)(\tau^{-+}_{1jk})^2+ \bigg(\frac{\delta_2\nu^-_{1jk}\tau^{-+}_{1jk}\tau^{--}_{1jk}}{\tau^{-+}_{1jk}+\tau^{--}_{1jk}}\bigg)^2 +\bigg(\frac{\delta_2^2\nu^-_{1jk}\tau^{-+}_{1jk}\tau^{--}_{1jk}}{\tau^{-+}_{1jk}+\tau^{--}_{1jk}}\bigg)^2\bigg)^{\frac{1}{2}} \frac{\tau^{--}_{1jk}}{\tau^{-+}_{1jk}+\tau^{--}_{1jk}} |\tilde{D}^-_{1jk}| \\
& + \bigg((\tau^{-+}_{1jk})^2+ \bigg(\frac{\delta_2\nu^-_{1jk}(\tau^{-+}_{1jk})^2}{\tau^{-+}_{1jk}+\tau^{--}_{1jk}}\bigg)^2 +\bigg(\frac{\delta_2^2\nu^-_{1jk}(\tau^{-+}_{1jk})^2}{\tau^{-+}_{1jk}+\tau^{--}_{1jk}}\bigg)^2\bigg)^{\frac{1}{2}} \frac{\tau^{--}_{1jk}}{\tau^{-+}_{1jk}+\tau^{--}_{1jk}} |\tilde{D}^-_{1jk}| \\
\le & \bigg(2S_M+ \bigg(S_M^2+\frac{2S_M^4}{(1-2\lambda_2')^2 S_m^2}\bigg)^{\frac{1}{2}} \bigg)(\beta_2-\beta_1) |\tilde{D}^-_{1jk}| \\
\le & \bigg(2+\bigg(1+\frac{128}{49}\bigg(\frac{S_M}{S_m}\bigg)^2\bigg)^{\frac{1}{2}}\bigg)S_M(\beta_2-\beta_1) |\tilde{D}^-_{1jk}|.
\end{split}
\]
Also,
\[
\begin{split}
\int_{\tilde{D}_{2j}} & |\nabla(\varphi_{2j},\psi_2)|\,dxdt = \int_{T^{1}_{2j}\cup T^{2}_{2j}\cup T^{3}_{2j}\cup T^{4}_{2j}\cup R^{5}_{2j}}|\nabla(\varphi_{2j},\psi_2)|\,dxdt \\
= & \int_{T^{1}_{2j}\cup T^{2}_{2j}\cup T^{3}_{2j}\cup T^{4}_{2j}}((1+\delta_2^2)(\tau^{+}_{2j})^2+(\varphi_{2j})^2+((\psi_2)_t)^2)^{\frac{1}{2}}\,dxdt\\
& + \int_{R^{5}_{2j}}((\tau^{-}_{2j})^2 +(\varphi_{2j})^2+((\psi_2)_t)^2)^{\frac{1}{2}}\,dxdt\\
\le & \bigg((1+\delta_2^2)(\tau^{+}_{2j})^2+ \bigg(\frac{\delta_2\nu_{2j}\tau^{+}_{2j}\tau^{-}_{2j}}{\tau^{+}_{2j}+\tau^{-}_{2j}}\bigg)^2 +\bigg(\frac{\delta_2^2\nu_{2j}\tau^{+}_{2j}\tau^{-}_{2j}}{\tau^{+}_{2j}+\tau^{-}_{2j}}\bigg)^2\bigg)^{\frac{1}{2}} \frac{\tau^{-}_{2j}}{\tau^{+}_{2j}+\tau^{-}_{2j}} |\tilde{D}_{2j}| 
\end{split}
\]
\[
\begin{split}
& + \bigg((\tau^{-}_{2j})^2+ \bigg(\frac{\delta_2\nu_{2j}\tau^{+}_{2j}\tau^{-}_{2j}}{\tau^{+}_{2j}+\tau^{-}_{2j}}\bigg)^2 +\bigg(\frac{\delta_2^2\nu_{2j}\tau^{+}_{2j}\tau^{-}_{2j}}{\tau^{+}_{2j}+\tau^{-}_{2j}}\bigg)^2\bigg)^{\frac{1}{2}} \frac{\tau^{+}_{2j}}{\tau^{+}_{2j}+\tau^{-}_{2j}} |\tilde{D}_{2j}| \\
\le & \frac{2S_M\tau^{-}_{2j}}{\tau^{+}_{2j}+\tau^{-}_{2j}} |\tilde{D}_{2j}| +\frac{\sqrt{3}S_M\tau^{+}_{2j}}{\tau^{+}_{2j}+\tau^{-}_{2j}} |\tilde{D}_{2j}|\le 2S_M |\tilde{D}_{2j}|.
\end{split}
\]
Combining these estimates, we obtain that
\[
\int_{\Omega_{t_1}^{t_2}}| \nabla z_2-\nabla z_1|\,dxdt \le c_2((\beta_2 -\beta_1)|Q_1|+|Q_2|)\le c_2((\beta_2 -\beta_1)|Q|+|Q_2|),
\]
where
\[
c_2:=\frac{6S_M^2}{S_m}>0.
\]
Thus, (i) for $i=2$ holds.

In this final stage, we show that if $i_0=2$, then (j) for $i=2$ holds.  So we assume $i_0=2.$ Hence, for all $i\ge i_0=2,$
\begin{equation}\label{finalproof-11}
\zeta_0^\pm(s,r)=1\;\;\forall(s,r)\in U^\pm_{i-1}.
\end{equation}

Let $D\in\mathcal{F}_{1}.$ Then from (b), (c), and (e) for $i=1$, there are five numbers $j_1,\ldots, j_5\in\N$ with
\[
T_{1j_1}\cup T_{1j_2}\cup T_{1j_3}\cup T_{1j_4}\cup R_{1j_5} \subset D
\]
such that
\[
|D|=|T_{1j_1}\cup T_{1j_2}\cup T_{1j_3}\cup T_{1j_4}\cup R_{1j_5}| .
\]
We now observe that
\[
\begin{split}
\int_D & \zeta^+_0((v_2)_x,(w_2)_t)\,dxdt= \int_{T_{1j_1}\cup T_{1j_2}\cup T_{1j_3}\cup T_{1j_4}\cup R_{1j_5}}\zeta^+_0((v_2)_x,(w_2)_t)\,dxdt \\
& = \sum_{k\in\N} \bigg(\int_{\tilde{D}^+_{1j_1 k}}+ \int_{\tilde{D}^+_{1j_2 k}}+ \int_{\tilde{D}^+_{1j_3 k}}+ \int_{\tilde{D}^+_{1j_4 k}}+ \int_{\tilde{D}^-_{1j_5 k}} \bigg) \zeta^+_0((v_2)_x,(w_2)_t)\,dxdt.
\end{split}
\]
Let $k\in\N.$ Then from (\ref{proof:s-lemma-0-2-0}), (\ref{finalproof-11}), the first of (g) for $i=2$, and (vi) in Lemma \ref{lem:patching-function},
\[
\int_{\tilde{D}^+_{1j k}} \zeta^+_0((v_2)_x,(w_2)_t)\,dxdt = \frac{\tau^{+-}_{1j k}}{\tau^{++}_{1j k} + \tau^{+-}_{1j k}} |\tilde{D}^+_{1j k}|\ge \frac{\tau^{++}_{1j k}}{\tau^{++}_{1j k} + \tau^{+-}_{1j k}} |\tilde{D}^+_{1j k}|
\]
for $j=j_1,\ldots,j_4$, and
\[
\int_{\tilde{D}^-_{1j_5 k}} \zeta^+_0((v_2)_x,(w_2)_t)\,dxdt = \frac{\tau^{--}_{1j_5 k}}{\tau^{-+}_{1j_5 k} + \tau^{--}_{1j_5 k}} |\tilde{D}^-_{1j k}|.
\]
Thus, from (\ref{choice-alpha-i}), (\ref{choice-beta_i}), (\ref{finalproof-12}), and (\ref{finalproof-13}),
\[
\int_D  \zeta^+_0((v_2)_x,(w_2)_t)\,dxdt \ge \frac{\lambda_2-\lambda_2'}{1-2\lambda_2'}|D|\ge c_3(\beta_2-\beta_1)|D|,
\]
where $c_3:=\frac{\kappa_0\ell_0}{36}>0$. Likewise, one can check that
\[
\int_D  \zeta^-_0((v_2)_x,(w_2)_t)\,dxdt \ge c_3(\beta_2-\beta_1)|D|.
\]
Hence, the first inequalities in (j) for $i=2$ hold.

Next, for $k\in\N$ and $j=j_1,\ldots,j_4$, it follows from (\ref{finalproof-14}) and (\ref{finalproof-15}) that
\[
\int_{\tilde{D}^+_{1j k}} \zeta^+_0((v_2)_x,(w_2)_t)\,dxdt = \frac{\tau^{+-}_{1j k}}{\tau^{++}_{1j k} + \tau^{+-}_{1j k}} |\tilde{D}^+_{1j k}| \ge (1-(\beta_2-\beta_1))|\tilde{D}^+_{1j k}|;
\]
thus,
\[
\int_D \zeta^+_0((v_2)_x,(w_2)_t)\,dxdt\ge (1-(\beta_2-\beta_1))|T_{1j_1}\cup T_{1j_2}\cup T_{1j_3}\cup T_{1j_4}|.
\]
On the other hand, from (\ref{finalproof-11}) and the first of (g) for $i=1$,
\[
\int_D \zeta^+_0((v_1)_x,(w_1)_t)\,dxdt=|T_{1j_1}\cup T_{1j_2}\cup T_{1j_3}\cup T_{1j_4}|
\]
so that
\[
\int_D \zeta^+_0((v_2)_x,(w_2)_t)\,dxdt\ge (1-(\beta_2-\beta_1))\int_D \zeta^+_0((v_1)_x,(w_1)_t)\,dxdt.
\]
Similarly, one can check that
\[
\int_D \zeta^-_0((v_2)_x,(w_2)_t)\,dxdt\ge (1-(\beta_2-\beta_1))\int_D \zeta^-_0((v_1)_x,(w_1)_t)\,dxdt.
\]
Hence, the second inequalities in (j) for $i=2$ hold.

The second step is now complete.

\subsection*{Acknowledgments} The authors would like to thank the anonymous referees for valuable suggestions that greatly improved the presentation of the paper.
H. J. Choi was supported by the National Research Foundation of Korea (grant RS-2023-00280065). S. Kim was supported by the National Research Foundation of Korea (grant NRF-2022R1F1A1063379, RS-2023-00217116) and Korea Institute for Advanced Study(KIAS) grant funded by the Korea government (MSIP). Y. Koh was supported by the National Research Foundation of Korea (grant NRF-2022R1F1A1061968) and Korea Institute for Advanced Study(KIAS) grant funded by the Korea government (MSIP).

\subsubsection*{\emph{\textbf{Ethical Statement.}}} The manuscript has not been submitted to more than one journal for simultaneous consideration.  The manuscript has not been published previously.


\subsubsection*{\emph{\textbf{Conflict of Interest:}}}  The authors declare that they have no conflict of interest.

\end{document}